\setlist[itemize]{itemsep=3pt,topsep=3pt}
\setlist[enumerate]{itemsep=3pt,topsep=3pt}
\numberwithin{equation}{section}
\numberwithin{table}{section}
\numberwithin{figure}{section}
\newcommand{\F}{\mathcal{F}}
\newcommand{\N}{\mathbb{N}}
\newcommand{\T}{\mathcal{T}}
\newcommand{\opL}{\mathcal{L}}
\newcommand{\R}{\mathbb{R}}
\newcommand{\di}{\mathrm{d}}
\newcommand{\mO}{\mathcal{O}}
\newcommand{\E}[1]{\mathbb{E}\left[#1\right]}
\newcommand{\Et}[1]{\mathbb{E}_{t}\left[#1\right]}
\newcommand{\Etn}[1]{\mathbb{E}_{t_n}\left[#1\right]}
\newcommand{\Etni}[1]{\mathbb{E}_{t_{n, i}}\left[#1\right]}
\newcommand{\Etnj}[1]{\mathbb{E}_{t_{n, j}}\left[#1\right]}
\newcommand{\br}[1]{\left(#1\right)}
\newcommand{\abs}[1]{\left\vert#1\right\vert}
\newcommand{\vabs}[2]{#1\vert#2 #1\vert}
\newcommand{\rhof}[1]{\rho\left(#1\right)}
\newcommand{\srhof}[1]{\rho^2\left(#1\right)}
\newcommand{\norm}[1]{\left\|#1\right\|}
\newcommand{\bbr}[1]{\left\{#1\right\}}
\newcommand{\dbr}[1]{\left\lfloor#1\right\rceil}
\newcommand{\vbr}[2]{#1(#2 #1)}
\newcommand{\rbr}[1]{\left[#1\right]}
\newcommand{\sptext}[1]{\;\; \text{#1} \;\;}
\newcommand{\alignbox}[1]{\begin{gathered}#1\end{gathered}}
\newcommand{\tree}[1]{
	\begin{forest}
		for tree={grow=north, circle, draw, inner sep=1pt, font=\small},
		#1
	\end{forest}
}
\theoremstyle{plain}
\newtheorem{theorem}{Theorem}[section]
\newaliascnt{assumption}{theorem}
\newtheorem{assumption}[assumption]{Assumption}
\newaliascnt{scheme}{theorem}
\newtheorem{scheme}[scheme]{Scheme}
\newtheorem{proposition}[theorem]{Proposition}
\newtheorem{lemma}[theorem]{Lemma}
\newtheorem{corollary}[theorem]{Corollary}
\theoremstyle{definition}
\newtheorem{definition}[theorem]{Definition}
\newtheorem{example}[theorem]{Example}
\theoremstyle{remark}
\newtheorem{remark}[theorem]{Remark}
\crefname{theorem}{Theorem}{Theorems}
\Crefname{theorem}{Theorem}{Theorems}
\crefname{proposition}{Proposition}{Propositions}
\Crefname{proposition}{Proposition}{Propositions}
\crefname{lemma}{Lemma}{Lemmas}
\Crefname{lemma}{Lemma}{Lemmas}
\crefname{corollary}{Corollary}{Corollaries}
\Crefname{corollary}{Corollary}{Corollaries}
\crefname{definition}{Definition}{Definitions}
\Crefname{definition}{Definition}{Definitions}
\crefname{remark}{Remark}{Remarks}
\Crefname{remark}{Remark}{Remarks}
\crefname{hypothesis}{Hypothesis}{Hypotheses}
\Crefname{hypothesis}{Hypothesis}{Hypotheses}
\crefname{claim}{Claim}{Claims}
\Crefname{claim}{Claim}{Claims}
\crefname{assumption}{Assumption}{Assumptions}
\Crefname{assumption}{Assumption}{Assumptions}
\crefname{scheme}{Scheme}{Schemes}
\Crefname{scheme}{Scheme}{Schemes}
\crefname{example}{Example}{Examples}
\Crefname{example}{Example}{Examples}
\crefname{equation}{}{}
\Crefname{equation}{}{}
\newcommand{\keywords}[1]{\par\smallskip\noindent\textbf{Keywords.} #1}
\newcommand{\subclass}[1]{\par\smallskip\noindent\textbf{MSC2020.} #1}
	\newcommand\figcaption{\def\@captype{figure}\caption}
	\newcommand\tabcaption{\def\@captype{table}\caption}
\title{Explicit Runge-Kutta Schemes for Backward Stochastic Differential Equations\thanks{Shuixin Fang was supported by the China Postdoctoral Science Foundation (Grant Nos. GZB20250705 and 2025M783150). Yue Qiu was supported by the National Natural Science Foundation of China (Grant No. 12471404), the Chongqing Natural Science Foundation (Grant No. CSTB2025NSCQ-GPX0731), and the Fundamental Research Funds for the Central Universities (Grant No. 2025CDJ-IAISYB-015). Weidong Zhao was supported by the National Natural Science Foundation of China (Grant Nos. 12371398 and 12071261).}}
\author{
	Shuixin Fang\thanks{SKLMS \& Institute of Computational Mathematics and Scientific/Engineering Computing, Academy of Mathematics and Systems Science, Chinese Academy of Sciences, Beijing 100190, China. Email: sxfang@amss.ac.cn}
	\and
	Yue Qiu\thanks{College of Mathematics and Statistics, Chongqing University, Chongqing 400044, China. Email: qiuyue@cqu.edu.cn}
	\and
	Weidong Zhao\thanks{School of Mathematics, Shandong University, Jinan, Shandong 250100, China. Corresponding author. Email: wdzhao@sdu.edu.cn}
}
\date{}
\begin{document}

\begingroup
\renewcommand{\RequirePackage}[2][]{}
\renewcommand{\bibliographystyle}[1]{}
\renewcommand{\bibliography}[1]{}

\maketitle

\begin{abstract}
The Butcher theory provides a powerful tool for analyzing order conditions of Runge-Kutta schemes for ordinary differential equations (ODEs); however, such a theory has not yet been well established for backward stochastic differential equations (BSDEs) -- motivating the current work to address this gap. Specifically, we propose a new class of explicit Runge-Kutta schemes for BSDEs. These schemes admit a concise formulation that closely mirrors their ODE counterparts. Building on this formulation, we extend the Butcher theory to the proposed schemes, thereby enabling a symbolic derivation of Taylor expansions for the local truncation errors, and yielding the order conditions. Our approach preserves the elegance and generality of the original Butcher theory: it avoids stage-by-stage error expansions and provides a systematic, stage-inductive analysis, applicable to schemes with any number of stages and any target order. Numerical experiments support the theoretical results.
\keywords{Backward stochastic differential equations \and Runge-Kutta schemes \and Butcher theory \and rooted trees \and order conditions}
\subclass{65C30 \and 60H35 \and 65C20}
\end{abstract}
\section{Introduction}

In this work, we focus on the numerical solution of Markovian backward stochastic differential equations (BSDEs) defined on $(\Omega, \mathcal{F}, \mathbb{F}, \mathbb{P})$:
\begin{equation}\label{bsde}
    Y_t = \varphi(X_T) + \int_t^T f(s, X_s, Y_s, Z_s)\,\mathrm{d}s - \int_t^T Z_s\,\mathrm{d}W_s, \quad t \in [0, T],
\end{equation}
where $X_t := X_0 + \sigma W_t$; $T > 0$ is the deterministic terminal time; $(\Omega, \mathcal{F}, \mathbb{F}, \mathbb{P})$ is a complete filtered probability space with $\mathbb{F} = (\mathcal{F}_t)_{0 \leq t \leq T}$ being the natural filtration generated by a standard $q$-dimensional Brownian motion $W = (W_t)_{0 \leq t \leq T}$; $\sigma \in \mathbb{R}^{d \times q}$ is the diffusion coefficient; $f: [0, T] \times \mathbb{R}^d \times \mathbb{R}^p \times \mathbb{R}^{p \times q} \to \mathbb{R}^p$ is the generator, and $\varphi: \mathbb{R}^d \to \mathbb{R}^p$ is the terminal condition of the BSDE. The stochastic integral with respect to $W_s$ is understood in the It\^o sense.
A pair of processes $(Y, Z)$ is called an $L^2$-adapted solution to the BSDE~\eqref{bsde} if it is $\mathbb{F}$-adapted, square-integrable, and satisfies \eqref{bsde}.

In 1990, Pardoux and Peng \cite{pardoux1990adapted} established the existence and uniqueness of solutions for nonlinear BSDEs. 
In 1991, Peng \cite{peng1991probabilistic} derived the nonlinear Feynman-Kac formula. This result reveals a connection between the BSDE~\eqref{bsde} and parabolic partial differential equations (PDEs), that is, 
under suitable regularity conditions, the solution $(Y, Z)$ of \eqref{bsde} admits the representation
\begin{equation}\label{eq_Ytut}
    Y_t = u(t, X_t), \quad Z_t = \nabla_x u(t, X_t) \sigma, \quad t \in [0, T).
\end{equation}
Here, $u: [0, T] \times \R^d \to \R^p$ is the classical solution to the following PDE:
\begin{equation}\label{eq_pde}
    \left\{\begin{aligned}
         & \opL_0 u(t, x)+f\left(t, x, u(t, x), \nabla_{x} u(t, x) \sigma\right)=0, \quad (t, x) \in [0, T) \times \R^d, \\
         & u(T, x) = \varphi(x), \quad x \in \R^d,
    \end{aligned}\right.
\end{equation}
where $\opL_0 := \partial_{t} + 1/2 \sum_{i, j = 1}^d \sum_{l=1}^q\sigma_{il} \sigma_{jl} \partial_{x_{i} x_{j}}^{2}$ is the second-order differential operator. 
The representations in \eqref{eq_Ytut} are the so-called Feynman-Kac formulas, 
which enable the development of numerical methods for solving the PDE~\eqref{eq_pde} via the associated BSDE~\eqref{bsde}, and vice versa.

Numerical methods for BSDEs have been extensively studied in recent years; see, e.g., \cite{Zhang2004numerical, gobet2005regression, zhao2006new, gobet2007error, zhao2010stable, Zhao2012A, Chassagneux2014Runge, Chassagneux2014Linear, zhao2014new, Chassagneux2015Numerical, Yang2020unified, Tang2022Stability, Wang2022Sinc, Fang2023Strong, Fang2023ODE} and references therein.
In particular, \cite{Chassagneux2014Runge} introduced a class of Runge-Kutta schemes for BSDEs and derived order conditions for $m$-stage schemes with order $r \leq 4$ and $m \leq 4$.
More recently, \cite{Tang2025Class} proposed another type of Runge-Kutta schemes and obtained order conditions for $m$-stage schemes up to order $r \leq 3$.
However, both analyses proceed via stage-by-stage error expansions.
As the number of error terms grows exponentially with $r$, the derivations become increasingly cumbersome and even impractical for high-order conditions.
A unified treatment that covers arbitrary $m$ and $r$ is still lacking.
This work aims to fill this gap.

Our main contributions are as follows.
We first propose a new class of explicit Runge-Kutta schemes for BSDEs, which are structurally distinct from those in \cite{Chassagneux2014Runge}.
Notably, the $Y$- and $Z$-components are computed in a unified manner using the same set of coefficients. This feature enables a unified treatment of local truncation errors.
Then we extend the original Butcher theory \cite{butcher2016numerical} to the BSDE setting, thereby establishing a symbolic calculus for deriving error expansions and order conditions.
Our approach preserves the elegance and generality of the original Butcher theory.
It avoids stage-by-stage error expansions tied to specific instances of Runge-Kutta schemes;
instead, it offers an analysis by induction on the stages, applicable to schemes with an arbitrary number of stages and arbitrary target orders.
Finally, we present numerical experiments to validate the derived order conditions.

The remainder of this paper is organized as follows.
In \cref{sec_RefRK}, we introduce the explicit Runge-Kutta schemes for BSDEs and present order conditions up to order 5.
\Cref{sec_theo_tree} is devoted to extending the original Butcher theory to the BSDE context.
The extended theory is then applied in \cref{sec_theoana} to establish the consistency of the proposed schemes.
Numerical experiments and concluding remarks are provided in \cref{sec_numtest}.
Supplementary Materials include an enumeration of the extended Butcher trees, coefficients for the fourth and fifth order Runge-Kutta schemes for BSDEs, explicit expressions of the derived order conditions, and a complete proof of the global error estimates.

\section{Runge-Kutta schemes for BSDEs}\label{sec_RefRK}

This section begins by reformulating the BSDE as a system of reference ordinary differential equations (ODEs), upon which we propose our explicit Runge-Kutta schemes. Subsequently, we present the corresponding order conditions up to order 5. 
The general order conditions and their rigorous proofs are deferred to \cref{sec_theo_tree,sec_theoana}.

\subsection{Reference ODEs}

For $0 \leq t \leq s \leq T$, taking the conditional expectation $\Et{\,\cdot\,} := \mathbb{E}\rbr{\,\cdot\,|\F_{t}}$ on both sides of the BSDE \eqref{bsde}, we obtain the following integral equation:
\begin{equation}\label{eq_inty}
    \Et{Y_s} = \Et{\varphi(X_T)} + \int_s^T \Et{f(r, X_r, Y_r, Z_r)}\di r,
    \quad s \in[t, T].
\end{equation}
To derive a second integral equation, we rewrite the BSDE~\eqref{bsde} into
\begin{equation*}
    Y_{t} = Y_s + \int_{t}^s f(r, X_r, Y_r, Z_r) \di r - \int_{t}^s Z_r \di W_r, \quad s \in [t, T].
\end{equation*}
Multiplying both sides by $\Delta W_{t, s}^{\top} := W_s^{\top} - W_{t}^{\top}$, taking the conditional expectation $\Et{\,\cdot\,}$, we obtain 
\begin{equation}\label{eq_pre_intyw}
\begin{aligned}
    \Et{Y_{t} \Delta W_{t, s}^{\top}} =\;& \Et{Y_{s} \Delta W_{t, s}^{\top}} + \int_{t}^{s} \Et{f(r, X_r, Y_r, Z_r)\Delta W_{t, r}^{\top}} \di r \\
    & - \Et{\int_{t}^s Z_r \di W_r \; \Delta W_{t, s}^{\top}},
\end{aligned}
\end{equation}
where we have used $\Delta W_{t, s} = \Delta W_{t, r} + \Delta W_{r, s}$ and $\Et{f(r, X_r, Y_r, Z_r) \Delta W_{r, s}^{\top}} = 0$ to simplify the second term on the right side of \eqref{eq_pre_intyw}.

By the properties of conditional expectation and the It\^o isometry, the first and last terms of \eqref{eq_pre_intyw} simplify as follows:
\begin{align}
    \Et{Y_{t} \Delta W_{t, s}^{\top}} &= Y_{t}\, \Et{\Delta W_{t, s}^{\top}} = 0,  \label{eq_yt_dw}\\
    \Et{\int_{t}^s Z_r \di W_r \; \Delta W_{t, s}^{\top}} &= \Et{\int_{t}^s Z_r \di W_r \; \int_t^s \di W_r^{\top}} = \int_{t}^s \Et{Z_r} \di r.  \label{eq_intz_dw}
\end{align}
Using the Feynman-Kac formula \eqref{eq_Ytut}, the first term on the right side of \eqref{eq_pre_intyw} can be rewritten as
\begin{equation}\label{eq_ydwt_z}
\begin{aligned}
    \Et{Y_{s} \Delta W_{t, s}^{\top}} =\;& \Et{u(s, X_t + \sigma \Delta W_{t, s}) \Delta W_{t, s}^{\top}} \\
    =\;& \int_{\R^q} u(s, X_t + \sigma \sqrt{s - t}\, w) \sqrt{s - t}\, w^{\top} \phi(w) \di w \\
    =\;& \int_{\R^q} \nabla_x u(s, X_t + \sigma \sqrt{s - t}\, w)\, \sigma \sqrt{s - t} \sqrt{s - t} \,\phi(w) \di w \\
    =\;& (s - t) \Et{Z_s},
\end{aligned}
\end{equation}
where $\phi(w) := (2\pi)^{-q/2} \exp(-|w|^2/2)$ is the density function of the standard $q$-dimensional normal distribution, and the third equality follows from the Stein identity \cite[Lemma 2]{Stein1981Estimation}.

Substituting \eqref{eq_yt_dw}, \eqref{eq_intz_dw}, and \eqref{eq_ydwt_z} into \eqref{eq_pre_intyw}, we obtain the second integral equation
\begin{equation}\label{eq_intz}
    0 = \br{s - t} \Et{Z_s} + \int_{t}^{s}\Et{f(r, X_r, Y_r, Z_r)\Delta W_{t, r}^{\top}} \di r - \int_{t}^{s}\Et{Z_r}\di r
\end{equation}
for $s \in [t, T]$. 
Assuming sufficient regularity conditions on $f$, we differentiate both sides of \eqref{eq_inty} and \eqref{eq_intz} with respect to $s$, which yields the following system of reference ODEs:
\begin{equation}\label{eq_refode0}
    \left\{
    \begin{aligned}
        \frac{\di \Et{Y_s}}{\di s} & = -\Et{f(s, X_s, Y_s, Z_s)}, \\
        \frac{\di \Et{Z_s}}{\di s} & = -\Et{\frac{f(s, X_s, Y_s, Z_s)\Delta W_{t, s}^{\top}}{s - t}},
    \end{aligned}\right.\quad s \in (t, T]. 
\end{equation}
The reference ODE~\eqref{eq_refode0} can be rewritten into the following concise form:
\begin{equation}\label{eq_refode}
    \frac{\di \Et{\Theta_s}}{\di s} = -\Et{F_{t}(s, \Theta_s)}, \quad s \in (t, T],
\end{equation}
where $\Theta_s$ and $F_{t}(s, \Theta_s)$ are both $\R^p \times \R^{p \times q}$-valued random variables given by
\begin{equation}\label{eq_def_thetaF}
    \Theta_s := \br{Y_s, Z_s}, \;\;\; F_{t}(s, \theta) := \vbr{\bigg}{f(s, X_s, \theta), f(s, X_s, \theta) \frac{\Delta W_{t, s}^{\top}}{s - t}}, \;\;\; \theta \in \R^p \times \R^{p \times q}.
\end{equation}
With $t$ fixed in $[0, T)$, the new reference equation \eqref{eq_refode} is a local ODE system with respect to the unknown $s \to \br{\Et{Y_s}, \Et{Z_s}}$ for $s \in (t, T]$.
Benefitting from this, many numerical methods for ODEs can be directly applied to \eqref{eq_refode}.
\begin{remark}\label{rmk_dsEZs}
    When sufficient regularity is available, the right side of the second equation in \eqref{eq_refode0} admits a finite limit as $s \to t+$, i.e.,
    \begin{equation*}
        \left.\frac{\di \Et{Z_s}}{\di s}\right|_{s=t+} = - \nabla_x \tilde{f}\br{t, X_t} \sigma \sptext{with} \tilde{f}\br{t, x} = f\br{t, x, u\br{t, x}, \nabla_x u\br{t, x}\sigma},
    \end{equation*}
    which follows from \eqref{eq_Ytut} and the Stein identity \cite[Lemma 2]{Stein1981Estimation}.
\end{remark}

\subsection{Runge-Kutta schemes}\label{sec_rk_sch}
On the time interval $[0, T]$, we introduce a regular time partition:
\begin{equation}\label{eq_timepart}
    \pi_N: 0 = t_0 < t_1 < \cdots < t_N = T, \quad C_{\pi_N} := \frac{\max_{0\leq n\leq N-1} \Delta t_n}{\min_{0\leq n\leq N-1} \Delta t_n} \leq C_0 < +\infty,
\end{equation}
where $\Delta t_n := t_{n+1} - t_n$ and $C_0$ is a constant independent of $\pi_N$.
For $n = 0, 1, \ldots, N-1$, define $\Delta t := \max_{0\leq n\leq N-1} \Delta t_n$ and $\Delta W_{n+1} := W_{t_{n+1}} - W_{t_n}$.
Let $\Theta^n := \br{Y^n, Z^n}$ denote the numerical approximation to $\Theta_t$ at $t = t_n$.
Throughout this paper, the summation $\sum_{j=i}^m \cdot$ is understood to be zero whenever $i > m$.

Based on the reference ODEs~\eqref{eq_refode} and the theory of Runge-Kutta methods for ODEs, we propose our explicit Runge-Kutta scheme as follows.

\begin{scheme}[Explicit Runge-Kutta scheme]\label{sch_rk}
    Given $\Theta^N$, for $n = N-1, N-2, \cdots, 0$, solve $\Theta^n$ by
    \begin{equation*}
        \left\{\begin{aligned}
            \Theta^{n, i} &= \Etni{\Theta^{n+1}} + \Delta t_n \sum_{j=i+1}^{m} a_{ij} \Etni{F_{t_{n, i}}\br{t_{n, j}, \Theta^{n, j}}}, \quad i = m, \cdots, 1, \\
            \Theta^{n}    &= \Etn{\Theta^{n+1}} + \Delta t_n \sum_{i=1}^m b_i \Etn{F_{t_{n}}\br{t_{n, i}, \Theta^{n, i}}},
        \end{aligned}\right.
    \end{equation*}
    where $t_{n, i} := t_{n+1} - c_i \Delta t_n$, and all coefficients $a_{ij}$, $b_i$, and $c_i$ are real scalars subject to the constraint $0 = c_m < c_{m-1} < \cdots < c_1 < c_0 = 1$.
\end{scheme}

In line with the classical notation in the ODE literature, \Cref{sch_rk} can be represented using the following Butcher tableau \cite{butcher2016numerical}:
\begin{equation}\label{eq_ButTab}
    \begin{array}{c|ccccc}
        c_m     & a_{m, m}   & a_{m, m-1}   & \cdots & a_{m, 1}   \\
        c_{m-1} & a_{m-1, m} & a_{m-1, m-1} & \cdots & a_{m-1, 1} \\
        \vdots  & \vdots     & \vdots       & \ddots & \vdots     \\
        c_1     & a_{1, m}   & a_{1, m-1}   & \cdots & a_{1, 1}   \\\hline
        1       & b_m        & b_{m-1}      & \cdots & b_1
    \end{array},
\end{equation}
where the coefficients $a_{ij}$ with $j \le i$ are not used by the explicit \Cref{sch_rk}; they are included solely for notational convenience in the exposition and analysis.
We also remark that the ordering of $a_{ij}$, $b_i$, and $c_i$ in \eqref{eq_ButTab} differs from the classical ODE case. 
This distinction arises because the stage index $i$ in \Cref{sch_rk} is arranged to decrease from $m$ to $1$, in contrast to the increasing order typically used for ODEs. 
Such a backward progression of $i$ is adopted to be consistent with the backward iteration of the time index $n$ in \Cref{sch_rk}.
% In both \Cref{sch_rk} and the Butcher tableau~\eqref{eq_ButTab}, the coefficients $a_{ij}$, $b_i$, and $c_i$ may, in principle, depend on the time index $n$; however, for notational simplicity, this dependence is omitted throughout the presentation.

To guarantee the consistency of \Cref{sch_rk}, its coefficients must satisfy a set of so-called order conditions. 
\Cref{tab_ordcond} summarizes these order conditions for \Cref{sch_rk} up to order $5$. 
The general order conditions for arbitrary order, together with rigorous proofs, will be presented in \cref{sec_ord_condi} (see \Cref{thm_consistent,prop_Cr_table}) following the development of the Butcher theory for \Cref{sch_rk}.

\begin{table}[tb]
    \centering
    \renewcommand\arraystretch{1.2}
    \caption{
    Order conditions for \Cref{sch_rk}; see \cref{eq_defCr} for conditions of arbitrary order.
    }
    \label{tab_ordcond}
    \begin{tabular}{@{}c|c|c|c|c|c@{}}
        \toprule
        Order      & 1       & 2       & 3       & 4        & 5        \\ \midrule
        Conditions & (1)--(4) & (1)--(5) & (1)--(7) & (1)--(11) & (1)--(20) \\ \bottomrule
    \end{tabular}%
    \\ \vspace{0.3em}
    \begin{tabular}{@{}ll|ll@{}}
        \toprule
        No.  & Conditions                                                   & No.  & Conditions                                                            \\ \midrule
        (1)  & $0 = c_m < c_{m-1} < \cdots < c_1 < c_0 = 1$                 & (11) & $\sum_{i, j, k=1}^m b_i a_{i j} a_{j k} c_k=\frac{1}{24}$             \\
        (2)  & $a_{ij} = 0$ if $1 \leq j \leq i \leq m$                     & (12) & $\sum_{i=1}^m b_i c_i^4=\frac{1}{5}$                                  \\
        (3)  & $c_i = \sum_{j=1}^m a_{ij} \sptext{for} i = 1, 2, \cdots, m$ & (13) & $\sum_{i, j=1}^m b_i c_i^2 a_{i j} c_j=\frac{1}{10}$                  \\
        (4)  & $\sum_{i=1}^m b_i=1$                                         & (14) & $\sum_{i, j=1}^m b_i c_i a_{i j} c_j^2=\frac{1}{15}$                  \\
        (5)  & $\sum_{i=1}^m b_i c_i=\frac{1}{2}$                           & (15) & $\sum_{i, j, k=1}^m b_i c_i a_{i j} a_{j k} c_k=\frac{1}{30}$         \\
        (6)  & $\sum_{i=1}^m b_i c_i^2=\frac{1}{3}$                         & (16) & $\sum_{i=1}^m b_i (\sum_{j=1}^m a_{i j} c_j )^2=\frac{1}{20}$         \\
        (7)  & $\sum_{i, j=1}^m b_i a_{i j} c_j=\frac{1}{6}$                & (17) & $\sum_{i, j=1}^m b_i a_{i j} c_j^3=\frac{1}{20}$                      \\
        (8)  & $\sum_{i=1}^m b_i c_i^3=\frac{1}{4}$                         & (18) & $\sum_{i, j, k=1}^m b_i a_{i j} c_j a_{j k} c_k=\frac{1}{40}$         \\
        (9)  & $\sum_{i, j=1}^m  b_i c_i a_{i j} c_j=\frac{1}{8}$           & (19) & $\sum_{i, j, k=1}^m b_i a_{i j} a_{j k} c_k^2=\frac{1}{60}$           \\
        (10) & $\sum_{i, j=1}^m b_i a_{i j} c_j^2=\frac{1}{12}$             & (20) & $\sum_{i, j, k, l=1}^m b_i a_{i j} a_{j k} a_{k l} c_l=\frac{1}{120}$ \\ \bottomrule
    \end{tabular}%
\end{table}

\Cref{tab_ordcond} provides a straightforward approach for constructing explicit Runge-Kutta schemes for BSDEs. 
In fact, the conditions (2)--(20) in \Cref{tab_ordcond} are identical to those in ODE cases~\cite{butcher2016numerical,Hairer1993Solving,Lambert1991Numerical}, while the condition (1) is an additional requirement arising from the backward dynamics of the BSDE~\eqref{bsde}. 
Therefore, specific instances of \Cref{sch_rk} can be constructed by adopting the Butcher tableau~\eqref{eq_ButTab} from the ODE case, subject to the additional order condition (1). 
Following this approach, we present below several Butcher tableaux for \Cref{sch_rk}, where coefficients $a_{ij}$ not explicitly shown are understood to be zero.

\begin{itemize}
    \item the 1-stage 1st-order scheme:
          \begin{equation}\renewcommand{\arraystretch}{1.2} \label{eq_sch_euler}
              \text{Euler:} \quad\begin{array}{l|l}
                  0        &   \\
                  \hline 1 & 1
              \end{array};
          \end{equation}
    \item the 2-stage 2nd-order Runge-Kutta schemes (see, e.g., \cite[Section 321]{butcher2016numerical} and \cite[Theorem 1.5]{Chassagneux2014Runge}):
        \begin{equation}\renewcommand{\arraystretch}{1.2}\label{eq_sch_RK2c1}
            \text{RK}\br{2; c_1}: \quad \begin{array}{c|cc}
                0        &                  & \\
                c_1      & c_1              & \\
                \hline 1 & 1-\frac{1}{2c_1} & \frac{1}{2c_1}
            \end{array} \sptext{with} 0 < c_1 < 1;
        \end{equation}
    \item the 3-stage 3rd-order Runge-Kutta scheme (see, e.g., \cite[Section 320]{butcher2016numerical} and \cite[Corollary 1.3]{Chassagneux2014Runge}):
          \begin{equation}\renewcommand{\arraystretch}{1.2}\label{eq_sch_RK3c1c2}
              \text{RK}\br{3; c_1, c_2}: \quad \begin{array}{c|ccc}
                  0   & & & \\
                  c_2 &  c_2  & & \\
                  c_1 & \frac{c_1\left(3 c_2-3 c_2^2 - c_1\right)}{c_2\left(2-3 c_2\right)} & \frac{c_1\left(c_1 - c_2\right)}{c_2\left(2-3 c_2\right)} & \\
                  \hline 1 & \frac{-3 c_1+6 c_2 c_1+2-3 c_2}{6 c_2 c_1} & \frac{3 c_1-2}{6 c_2\left(c_1-c_2\right)} & \frac{2-3 c_2}{6 c_1\left(c_1-c_2\right)}
              \end{array}
          \end{equation}
          with $0 < c_2 < c_1 < 1$ and $c_2 \neq \frac{2}{3}$.
\end{itemize}

Alternatively, we can numerically find the coefficients subject to the conditions in \Cref{tab_ordcond}.
The following two-step procedure provides an example:
\begin{enumerate}
    \item Set $c_i = 1 - i/m$ for $i = 0, 1, \cdots, m$ to fulfill the condition (1) in \Cref{tab_ordcond};
    \item Use optimization algorithms to minimize $\sum_{i, j=1}^m a_{ij}^2 + \sum_{i=1}^m b_i^2$ over all $a_{ij}$ and $b_i$ subject to the remaining $r$th order conditions in \Cref{tab_ordcond}.  
\end{enumerate}
Following the above procedure, we find the coefficients for \Cref{sch_rk} achieving orders $r=4$ and $5$; the corresponding coefficients are provided in the Supplementary Materials.
Moreover, leveraging \Cref{tab_ordcond}, one can formulate more sophisticated optimization problems to construct instances of \Cref{sch_rk} with improved performance.

\begin{remark}[Explicit order barrier]
    According to the Runge-Kutta theory for ODEs (see, e.g., \cite[Section 322]{butcher2016numerical}), when $m = 4$, the conditions (2)--(11) in \Cref{tab_ordcond} force $c_1 = 1$, which contradicts the condition (1). 
    Consequently, \Cref{sch_rk} suffers from an explicit order barrier: no 4th-order instance of \Cref{sch_rk} can be constructed when $m \leq 4$.
    In contrast, in the ODE setting explicit 4 stage Runge-Kutta schemes of order 4 do exist. 
    Hence, consistently with \cite[Theorem 1.7]{Chassagneux2014Runge}, \Cref{sch_rk} encounters the explicit order barrier earlier than its ODE counterparts. 
    This earlier barrier is caused by the additional constraint (1) in \Cref{tab_ordcond}.
\end{remark}

\begin{remark}[Comparison with \cite{Chassagneux2014Runge}]\label{rmk_ord_condi_ode}
    \Cref{sch_rk} differs significantly from existing Runge-Kutta schemes proposed in the literature for BSDEs, e.g., \cite[Definition 1.1]{Chassagneux2014Runge}.
    In particular, the solutions $Y_{t_n}$ and $Z_{t_n}$ are approximated by a unified scheme rather than two separate schemes with different coefficients.
    This feature facilitates the extension of Butcher theory to \Cref{sch_rk}, as detailed in \cref{sec_theo_tree}.
    The derived order conditions bypass stage-by-stage error expansions and are applicable to instances of \Cref{sch_rk} with an arbitrary number of stages and arbitrary target orders. 
    This constitutes a major novelty of our work compared to \cite{Chassagneux2014Runge}.     
\end{remark}

\section{Butcher theory for BSDEs}\label{sec_theo_tree}

In this section, we develop a Butcher theory for BSDEs, which will serve as the fundamental tool for establishing the consistency of \Cref{sch_rk} in the next section.

To motivate the forthcoming development, we first discuss the challenges that arise when directly transplanting the ODE approach \cite{butcher2016numerical,Hairer1993Solving} to \Cref{sch_rk}.
In the ODE setting, consistency is typically analyzed by Taylor expanding both the exact solution and the numerical scheme in time.
The resulting terms are organized via Butcher trees, which furnish a symbolic calculus for the associated coefficients and derivatives.
In the BSDE setting, the reference ODE~\eqref{eq_refode} involves two time variables: besides the evolution time $s$, a conditioning time $t$ enters through $\Et{\,\cdot\,}$.
This conditioning time is intrinsic to BSDEs and cannot be merged as an entry of the state variable $\Theta_s$, unlike in the ODE setting.
Moreover, the conditioning time cannot be regarded as fixed, since $t_{n,i}$ in $\Etni{\,\cdot\,}$ from \Cref{sch_rk} varies with the time-step size $\Delta t$.
Consequently, Taylor expansions must account for the $t$-dependence of $\Et{\,\cdot\,}$, generating additional terms that classical Butcher trees do not capture.
This necessitates an extension of Butcher theory.

Some frequently used notations in the subsequent development are as follows.
Define $\N = \bbr{0, 1, 2, \cdots}$ and $\N^+ = \bbr{1, 2, \cdots}$.
For $l \in \N^+$, denote by $\mathbb{A}_l$ the set of permutations of $1, 2, \cdots, l$, i.e.,
\begin{equation}\label{eq_defAl}
    \mathbb{A}_l := \Big\{\br{p_1, p_2, \cdots, p_l}: \bbr{p_i}_{i=1}^l \subset \bbr{i}_{i=1}^l; \; p_i = p_j \sptext{if and only if} i=j \Big\},
\end{equation}
and $\mathbb{A}_0 := \emptyset$.
For any two sets $A$ and $B$, define $A \backslash B := \bbr{x: x \in A, x \notin B}$. 
Denote by $C_b^{k_1, k_2, k_3, k_4}$ the set of functions $(t, x, y, z) \mapsto g(t, x, y, z)$ with uniformly bounded continuous partial derivatives up to orders $k_1$, $k_2$, $k_3$, $k_4$ with respect to $t \in [0, T]$, $x \in \R^d$, $y \in \R^p$, $z \in \R^{p\times q}$, respectively.
And the notations $C_b^{k_1}$, $C_b^{k_1, k_2}$, $C_b^{k_1, k_2, k_3}$ are defined analogously.
For a random vector $\eta$ depending on $h > 0$, denote $\eta = \mO\br{h}$ if there exists a constant $C$ independent of $h$ such that $\abs{\eta} \leq Ch$ a.s..
Additionally, if $C$ is further independent of $n \in \N$, we say $\eta = \mO\br{h}$ uniformly for $n \in \N$.
For a sequence of differential operators $\mathcal{L}_1, \mathcal{L}_2, \cdots, \mathcal{L}_m$, we denote their composition by $\prod_{i=1}^m \mathcal{L}_i := \mathcal{L}_1 \circ \mathcal{L}_2 \circ \cdots \circ \mathcal{L}_m$.

\subsection{Labelled numbered trees}

The extension of Butcher theory starts with the concept of labeled numbered trees (LN-trees), as defined in \Cref{def_tree0} and illustrated in \Cref{fig_diaram_lntree,fig_equal_tree}.

\begin{definition}[LN-tree and its branch]\label{def_tree0}
Set $\widetilde{\T}^0 := \emptyset$ and $\widetilde{\T}^1 := \{[\;]_k: k \in \N\}$, where $[\;]_k$ denotes the LN-tree consisting of a single node numbered by the integer $k$; this node is called the root of $[\;]_k$.
For $q = 2, 3, \cdots$, define recursively
\begin{equation}\label{eq_defT}
    \widetilde{\T}^{q} := \Big\{\rbr{\Upsilon_1\cdots\Upsilon_l}_k: \bbr{\Upsilon_i}_{i=1}^l \subset \bigcup_{j=1}^{q-1} \widetilde{\T}^{j},\; k \in \N, \; l \in \N \Big\},
\end{equation}
where
\begin{itemize}
\item[(a)] 
If $l=0$, set $\rbr{\Upsilon_1 \cdots \Upsilon_l}_k := \rbr{\;}_k$; if $l \in \N^{+}$, then $\rbr{\Upsilon_1\cdots\Upsilon_l}_k$ denotes the LN-tree obtained by introducing a new node numbered by $k$ and connecting it to the roots of the LN-trees $\Upsilon_1, \Upsilon_2, \cdots, \Upsilon_l$; this newly added node is the root of $\rbr{\Upsilon_1 \cdots \Upsilon_l}_k$;
\item[(b)] 
An LN-tree $\Upsilon^{\prime}$ is called a branch of $\rbr{\Upsilon_1 \cdots \Upsilon_l}_k$ if $\Upsilon^{\prime} \in \bbr{\Upsilon_i}_{i=1}^l$, or $\Upsilon^{\prime}$ is a branch of $\Upsilon_i$ for some $i \in \bbr{1, 2, \cdots, l}$; here, for preciseness, all LN-trees in $\widetilde{\T}^1$ are understood to have no branch.
\end{itemize}
Finally, define the set of all LN-trees by $\widetilde{\T} := \bigcup_{q\in \N} \widetilde{\T}^{q}$.
\end{definition}

\begin{figure}[t]
\centering
\begin{gather*}
    \overline{\Upsilon}_1 = [[\;]_{k_3} [\;]_{k_4}]_{k_2} = \alignbox{\tree{[$k_2$ [$k_4$] [$k_3$]]}}, \quad \overline{\Upsilon}_2 = [[\;]_{k_6}]_{k_5} =\; \alignbox{\tree{[$k_5$ [$k_6$]]}}, \quad \overline{\Upsilon}_3 = [\;]_{k_7} = \; \alignbox{\tree{[$k_7$]}}\\
    \overline{\Upsilon}_4 = [\overline{\Upsilon}_1 \overline{\Upsilon}_2 \overline{\Upsilon}_3]_{k_1} \; = \; \rbr{\alignbox{\tree{[$k_2$ [$k_4$] [$k_3$]]} \quad \tree{[$k_5$ [$k_6$]]} \quad \tree{[$k_7$]}}}_{k_1} = \alignbox{\tree{[$k_1$ [$k_7$] [$k_5$ [$k_6$]] [$k_2$ [$k_4$] [$k_3$]]]}}
\end{gather*}
\caption{
Schematic illustration of four LN-trees.
The notation $\alignbox{\tree{[$k$]}}$ denotes a node numbered by $k$.
The nodes numbered by $k_2$, $k_5$, $k_7$, and $k_1$ are the roots of $\overline{\Upsilon}_1$, $\overline{\Upsilon}_2$, $\overline{\Upsilon}_3$, and $\overline{\Upsilon}_4$, respectively.
The LN-trees $\overline{\Upsilon}_1$, $\overline{\Upsilon}_2$, $\overline{\Upsilon}_3$, $[\;]_{k_3}$, $[\;]_{k_4}$, and $[\;]_{k_6}$ are all branches of $\overline{\Upsilon}_4$.
The LN-tree $\overline{\Upsilon}_3$ has no branches.
Assigning specific values to $k_1, k_2, \cdots, k_7$ yields concrete instances of LN-trees; see \Cref{fig_equal_tree}.
}\label{fig_diaram_lntree}
\end{figure}

\begin{figure}[t]
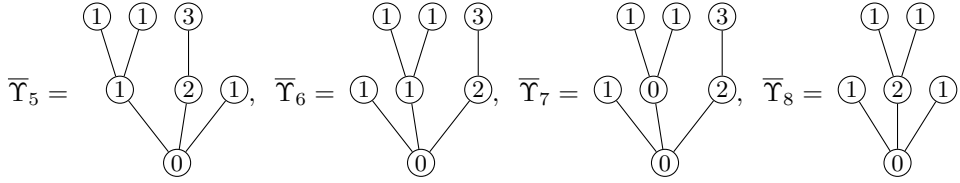

\centering
\begin{gather*}
    \overline{\Upsilon}_5 = \;\alignbox{\tree{[$0$ [$1$] [$2$ [$3$]] [$1$ [$1$] [$1$]]]}}, \;\;\overline{\Upsilon}_6 = \;\alignbox{\tree{[$0$ [$2$ [$3$]] [$1$ [$1$] [$1$]] [$1$]]}}, \;\; \overline{\Upsilon}_7 = \;\alignbox{\tree{[$0$ [$2$ [$3$]] [$0$ [$1$] [$1$]] [$1$]]}},  \;\;\overline{\Upsilon}_8 = \;\alignbox{\tree{[$0$ [$1$] [$2$ [$1$] [$1$]] [$1$]]}}
\end{gather*}
\caption{
Four LN-trees with specific node numbers.
Here, $\overline{\Upsilon}_5$ is a concrete instance of $\overline{\Upsilon}_4$ in \Cref{fig_diaram_lntree}, obtained by assigning specific values to $k_1, k_2, \cdots, k_7$.
Permuting the branches of $\overline{\Upsilon}_5$ yields $\overline{\Upsilon}_6$, whereas $\overline{\Upsilon}_7$ and $\overline{\Upsilon}_8$ cannot be obtained from $\overline{\Upsilon}_5$ by any branch permutation.
The similarity between $\overline{\Upsilon}_5$ and $\overline{\Upsilon}_6$ will be formalized in \cref{sec_equival}.
}\label{fig_equal_tree}
\end{figure}

\Cref{def_tree0} immediately yields the following proposition, which describes the monotonic relationships among the sets $\widetilde{\T}^q$ for $q \in \N^+$ and provides a simplified form of \eqref{eq_defT}.
\begin{proposition}\label{prop_mono_T}
(i) For $q \in \N$, it holds that $\widetilde{\T}^q \subset \widetilde{\T}^{q+1}$. 
(ii) The set $\widetilde{\T}^q$ given in \eqref{eq_defT} can be expressed as 
\begin{equation}\label{eq_express_Tq}
    \widetilde{\T}^{q} = \bbr{\rbr{\Upsilon_1\cdots\Upsilon_l}_k: \bbr{\Upsilon_i}_{i=1}^l \subset \widetilde{\T}^{q-1},\; k \in \N, \; l \in \N}, \quad q = 2, 3, \cdots.
\end{equation}
\end{proposition}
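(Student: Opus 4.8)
The plan is to obtain part (ii) as an immediate corollary of part (i), so essentially all the work lies in proving the monotonicity $\widetilde{\T}^q \subset \widetilde{\T}^{q+1}$ for every $q \in \N$. I would dispose of the two boundary indices first. For $q=0$ the inclusion is trivial, since $\widetilde{\T}^0 = \emptyset$ is contained in every set. For $q=1$ the crucial observation is the $l=0$ convention in rule (a) of \Cref{def_tree0}: when the list of subtrees is empty, $\rbr{\Upsilon_1\cdots\Upsilon_l}_k = \rbr{\;}_k = [\;]_k$, and in the recursion \eqref{eq_defT} defining $\widetilde{\T}^2$ the branch collection $\bbr{\Upsilon_i}_{i=1}^l$ is then empty, hence vacuously a subset of $\bigcup_{j=1}^1 \widetilde{\T}^j = \widetilde{\T}^1$. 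Thus every single-node tree $[\;]_k \in \widetilde{\T}^1$ is realised as $\rbr{\;}_k \in \widetilde{\T}^2$, giving $\widetilde{\T}^1 \subset \widetilde{\T}^2$.

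For the generic case $q \geq 2$ I would argue directly from \eqref{eq_defT}, with no nested induction needed. The descriptions of $\widetilde{\T}^q$ and $\widetilde{\T}^{q+1}$ differ only in the index range of the union from which branches are drawn, namely $\bigcup_{j=1}^{q-1} \widetilde{\T}^j$ versus $\bigcup_{j=1}^{q} \widetilde{\T}^j$; the ranges of $k$ and $l$ are identical. Since the former union is trivially contained in the latter (it runs over fewer indices), any branch collection admissible for $\widetilde{\T}^q$ is also admissible for $\widetilde{\T}^{q+1}$. Hence the representation $\rbr{\Upsilon_1\cdots\Upsilon_l}_k$ of an arbitrary element of $\widetilde{\T}^q$ simultaneously exhibits it as an element of $\widetilde{\T}^{q+1}$, which proves $\widetilde{\T}^q \subset \widetilde{\T}^{q+1}$ and completes (i).

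With (i) established, part (ii) follows by collapsing the union. For $q \geq 2$, chaining the inclusions from (i) yields $\widetilde{\T}^1 \subset \widetilde{\T}^2 \subset \cdots \subset \widetilde{\T}^{q-1}$, so that $\bigcup_{j=1}^{q-1} \widetilde{\T}^j = \widetilde{\T}^{q-1}$. Substituting this identity into the defining formula \eqref{eq_defT} replaces the union by the single set $\widetilde{\T}^{q-1}$ and reproduces \eqref{eq_express_Tq} verbatim.

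I do not expect a genuine obstacle: the whole argument is a careful unwinding of \Cref{def_tree0}. The one point deserving attention is the transition $\widetilde{\T}^1 \subset \widetilde{\T}^2$, where the $l=0$ convention must be invoked to view a bare node as a rooted tree with no branches; overlooking this edge case is the only way the base step of the monotonicity could appear to fail. It is also worth emphasising that (ii) cannot be proved in isolation, since the reduction of the union to its final term genuinely relies on the monotonicity supplied by (i).
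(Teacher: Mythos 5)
Your proof is correct, and it matches the paper's (implicit) argument: the paper states \Cref{prop_mono_T} as following immediately from \Cref{def_tree0}, and your careful unwinding — the $l=0$ convention for $\widetilde{\T}^1 \subset \widetilde{\T}^2$, the widening of the branch-union for $q \geq 2$, and the collapse $\bigcup_{j=1}^{q-1}\widetilde{\T}^j = \widetilde{\T}^{q-1}$ to get (ii) from (i) — is exactly the direct verification the paper leaves to the reader.
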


\begin{remark}[Connections to Butcher trees for ODEs]
The LN-tree plays a role analogous to the labeled tree in the ODE setting \cite[p.~146]{Hairer1993Solving}. 
It is, however, richer in structure because each node carries an integer $k_i$; see \Cref{fig_diaram_lntree}. 
These integers differ from the labels used in \cite[Section 305]{butcher2016numerical} and \cite[p.~146]{Hairer1993Solving}, 
since in an LN-tree, distinct nodes may share the same values of $k_i$; see \Cref{fig_equal_tree}. 
As will be seen in \cref{sec_expand_R}, this additional numbering flexibility is essential for representing the error expansion of \Cref{sch_rk}.
\end{remark}

\subsection{Functions associated with LN-trees}
 
In this subsection, we define some functions of the form $\phi: \widetilde{\T} \to \R$.
The definition is guided by the following observations.
\begin{enumerate}
\item 
The set $\widetilde{\T}$ admits the decomposition $\widetilde{\T} = \bigcup_{q=1}^{\infty} (\widetilde{\T}^q \backslash \widetilde{\T}^{q-1})$.
Hence, defining $\phi(\cdot)$ amounts to prescribing its values on each difference set $\widetilde{\T}^q \backslash \widetilde{\T}^{q-1}$ for $q = 1, 2, \cdots$.
By \Cref{prop_mono_T}(i), these difference sets are pairwise disjoint.
Thus, assigning $\phi(\Upsilon)$ separately on every $\widetilde{\T}^q \backslash \widetilde{\T}^{q-1}$ introduces no ambiguity.

\item 
By \Cref{prop_mono_T}(ii), for $q \geq 2$, every LN-tree $\Upsilon \in \widetilde{\T}^q \backslash \widetilde{\T}^{q-1}$ can be represented as $\Upsilon = \rbr{\Upsilon_1 \cdots \Upsilon_l}_k$ with $\Upsilon_1, \cdots, \Upsilon_l \in \widetilde{\T}^{q-1}$ and $k, l \in \N$.
Consequently, defining $\phi(\Upsilon)$ reduces to prescribing its dependence on $k$, $l$, and the values $\phi(\Upsilon_1), \cdots, \phi(\Upsilon_l)$; namely,
\begin{equation}\label{eq_def_phi}
    \phi(\Upsilon) := Q\vbr{\big}{k, l, \phi(\Upsilon_1), \cdots, \phi(\Upsilon_l)},
\end{equation}
where $Q$ is a certain function, and $\phi(\Upsilon_1), \cdots, \phi(\Upsilon_l)$ are defined on $\widetilde{\T}^{q-1} = \bigcup_{l=1}^{q-1} (\widetilde{\T}^l \backslash \widetilde{\T}^{l-1})$ by the same recursive procedure.

\item 
In summary, it suffices to first specify $\phi\br{\rbr{\;}_k}$ for $\rbr{\;}_k \in \widetilde{\T}^1$, and then, for $q = 2, 3, \cdots$, specify the function $Q$ in \eqref{eq_def_phi}. This recursive construction makes $\phi\br{\cdot}$ well-defined on $\widetilde{\T}$. 
\end{enumerate}

We are now ready to present Definition~\ref{def_order}, which defines two numerical characteristics of an LN-tree.

\begin{definition}[Order and factorial]\label{def_order}
    For $\rbr{\;}_k \in \widetilde{\T}^{1}$, we define its order $\abs{\rbr{\;}_k}$ and factorial $\gamma\br{\rbr{\;}_k}$ by
    \begin{equation}\label{eq0_recur_alp}
        \vabs{\big}{\rbr{\;}_k} := 1 + k, \quad \gamma\br{\rbr{\;}_k} := k!,
    \end{equation}
    respectively. For $q = 2, 3, \cdots$ and $\Upsilon = \rbr{\Upsilon_1 \cdots \Upsilon_{l}}_{k} \in \widetilde{\T}^{q} \backslash \widetilde{\T}^{q-1}$, we define its order $\abs{\Upsilon}$ and factorial $\gamma\br{\Upsilon}$ by
        \begin{equation}\label{eq_recur_alp}
            \abs{\Upsilon} := 1 + k + \sum_{i=1}^l \abs{\Upsilon_i}, \quad \gamma\br{\Upsilon} := k! \, l! \prod_{i=1}^l \gamma\br{\Upsilon_i},
        \end{equation}
    respectively. By induction on $q = 1, 2, \cdots$, these definitions of $\abs{\cdot}$ and $\gamma\br{\cdot}$ extend to $\widetilde{\T}$.
\end{definition}

In the following, we use \Cref{exam_1} to illustrate how to calculate the order and factorial for a specific LN-tree.

\begin{example}[Order and factorial]\label{exam_1}
Consider $\overline{\Upsilon}_1, \overline{\Upsilon}_2, \overline{\Upsilon}_3, \overline{\Upsilon}_4$ given in \Cref{fig_diaram_lntree}.
\begin{itemize}
\item 
Their orders are given by $\abs{\overline{\Upsilon}_1} = 1 + k_2 + (1 + k_3 + 1 + k_4)$, $\abs{\overline{\Upsilon}_2} = 1 + k_5 + (1 + k_6)$, $\abs{\overline{\Upsilon}_3} = 1 + k_7$, and $\abs{\overline{\Upsilon}_4} = 1 + k_1 + \abs{\overline{\Upsilon}_1} + \abs{\overline{\Upsilon}_2} + \abs{\overline{\Upsilon}_3}$, respectively.
In fact, the order of an LN-tree is given by the total number of its nodes plus the sum of the labels on those nodes.

\item Their factorials are calculated by
$\gamma\br{\overline{\Upsilon}_1} = k_2! \times 2! \times (k_3!  \times 0! \times k_4!  \times 0!)$, $\gamma\br{\overline{\Upsilon}_2} = k_5! \times 1! \times (k_6!  \times 0!)$, $\gamma\br{\overline{\Upsilon}_3} = k_7! \times 0!$ and $\gamma\br{\overline{\Upsilon}_4} = k_1! \times 3! \times \gamma\br{\overline{\Upsilon}_1} \times \gamma\br{\overline{\Upsilon}_2} \times \gamma\br{\overline{\Upsilon}_3}$, respectively.
\end{itemize}
\end{example}

In what follows, Definitions~\ref{def_weight} and \ref{def_elediff} introduce the elementary coefficients and the elementary differentials, respectively. 
The elementary coefficients will play a central role in the order conditions given in \cref{sec_ord_condi}.

\begin{definition}[Elementary coefficient]\label{def_weight}
    For $\rbr{\;}_k \in \widetilde{\T}^{1}$, the elementary coefficients $A_j^m\br{[\;]_k}$, $j = 0, 1, \cdots, m$, are defined as
    \begin{align}
         & A_i^m\br{[\;]_k} := \sum_{j=1}^{m} a_{ij} \br{c_i - c_j}^{k} - \frac{c_i^{k + 1}}{k + 1}, \quad i = m, m-1, \cdots, 1, \label{eq_defAi} \\
         & A_0^m\br{[\;]_k} := \sum_{j=1}^{m} b_j \br{c_0 - c_j}^{k} - \frac{1}{k + 1}. \label{eq_defA0}
    \end{align}
    For $q = 2, 3, \cdots$, and $\Upsilon = \rbr{\Upsilon_1 \cdots \Upsilon_l}_k \in \widetilde{\T}^{q} \backslash \widetilde{\T}^{q-1}$, the elementary coefficients $A_j\br{\Upsilon}$, $j = 0, 1, \cdots, m$, are defined as
    \begin{align}
         & A_i^m\br{\Upsilon} := \sum_{j = 1}^{m} a_{ij} \br{c_i - c_j}^{k} \prod_{p = 1}^l A_{j}^m\br{\Upsilon_{p}}, \quad i = m, m-1, \cdots, 1, \label{eq_recur_Ai} \\
         & A_0^m\br{\Upsilon} := \sum_{j = 1}^{m} b_j \br{c_0 - c_j}^{k} \prod_{p = 1}^l A_{j}^m\br{\Upsilon_{p}}. \label{eq_recur_A0}
    \end{align}
    By induction on $q = 1, 2, \cdots$, the definition of $\Upsilon \mapsto A_j^m\br{\Upsilon}$ is extended to $\Upsilon \in \widetilde{\T}$.
\end{definition}

\begin{definition}[Elementary differential]\label{def_elediff}
    For $\rbr{\;}_k \in \widetilde{\T}^{1}$, define the elementary differential $t \mapsto F_{t}^{[\;]_k}$ by
    \begin{equation}\label{eq_defFt0_Up}
        F_{t}^{[\;]_k} := \lim_{s \to t+} F_{t, s}^{[\;]_k}, \;\;\; F_{t, s}^{[\;]_k} := \partial_s^k \Et{F_{t}\br{s, \Theta_s}}, \;\; s \in (t, T), \;\; t \in [0, T). 
    \end{equation}
    For $q = 2, 3, \cdots$, and $\Upsilon = \rbr{\Upsilon_1 \cdots \Upsilon_l}_k \in \widetilde{\T}^{q} \backslash \widetilde{\T}^{q-1}$, define the elementary differential $t \mapsto F_{t}^{\Upsilon}$ by
    \begin{equation}\label{eq_recur_F}
        F_{t}^{\Upsilon} := \lim_{s \to t+} F_{t, s}^{\Upsilon}, \;\;\; F_{t, s}^{\Upsilon} := \partial_s^k \Et{\br{F_s^{\Upsilon_1} \cdot \partial_{\theta}} \br{F_s^{\Upsilon_2} \cdot \partial_{\theta}} \cdots \br{F_s^{\Upsilon_l} \cdot \partial_{\theta}} F_{t}\br{s, \Theta_s}}
    \end{equation}
    for $s \in (t, T)$ and $t \in [0, T)$,
    where $F_{t}\br{s, \Theta_s}$ is given in \eqref{eq_def_thetaF},
    and $F_s^{\Upsilon_p} \cdot \partial_{\theta} := \sum_{i=1}^p \sum_{j=1}^{q+1} F_{s, ij}^{\Upsilon_p} \partial_{\theta_{ij}}$ 
    % \begin{equation*}
    %     F_s^{\Upsilon_p} \cdot \partial_{\theta} := \sum_{i=1}^p \sum_{j=1}^{q+1} F_{s, ij}^{\Upsilon_p} \partial_{\theta_{ij}}
    % \end{equation*}
    with $F_{s, ij}^{\Upsilon_p}$ and $\theta_{ij}$ denoting the $\br{i, j}$th element of $F_{s}^{\Upsilon_p}$ and $\theta \in \R^{p \times (1 + q)}$, respectively.
    In \eqref{eq_recur_F}, the partial derivatives $\partial_{\theta}$ do not act on $F_s^{\Upsilon_1}, F_s^{\Upsilon_2}, \cdots, F_s^{\Upsilon_l}$, but only on the second argument of $F_{t}\br{s, \Theta_s}$.
    By induction on $q$, the definition of $\Upsilon \mapsto F_{t}^{\Upsilon}$ is extended to $\Upsilon \in \widetilde{\T}$.
\end{definition}
 
The existence of $F_{t}^{\Upsilon}$ and $F_{t, s}^{\Upsilon}$ in \eqref{eq_defFt0_Up} and \eqref{eq_recur_F} is nontrivial, since the convergence of the limits as $s \to t+$ depends on the regularity of $u$ and $f$. 
The precise regularity conditions will be given in \Cref{prop_existence_F} in the next section.

\subsection{Equivalence between LN-trees}\label{sec_equival}

Recalling \Cref{fig_equal_tree}, the LN-trees $\overline{\Upsilon}_5$ and $\overline{\Upsilon}_6$ differ only by a permutation of branches. 
Their topological structures coincide. 
Hence their numerical characteristics, elementary coefficients, and elementary differentials coincide. 
We formalize this equivalence in \Cref{def_equival_T}.

\begin{definition}[Equivalence relation]\label{def_equival_T}
    Any two LN-trees $\rbr{\;}_k, \rbr{\;}_l \in \widetilde{\T}^1$ are said to be equivalent (denoted as $\rbr{\;}_k \sim \rbr{\;}_l$) in $\widetilde{\T}^1$, if and only if $k = l$.
    For $q = 2, 3, \cdots$, any two LN-trees $\Upsilon, \Upsilon^{\prime} \in \widetilde{\T}^q$ are said to be equivalent (denoted as $\Upsilon \sim \Upsilon^{\prime}$) in $\widetilde{\T}^q$, if and only if one of the following conditions holds:
    \begin{itemize}
        \item there exists $k \in \N$, such that, $\Upsilon = \Upsilon^{\prime} = \rbr{\;}_k$;
        \item there exist $k \in \N$, $l \in \N^+$ and $\br{p_1, p_2, \cdots, p_l} \in \mathbb{A}_l$, such that 
        \begin{equation}\label{eq_equival_T}
            \Upsilon = \rbr{\Upsilon_1\cdots\Upsilon_l}_k, \quad \Upsilon^{\prime} = \rbr{\Upsilon_1^{\prime}\cdots\Upsilon_l^{\prime}}_k, \quad \Upsilon_{p_i} \sim \Upsilon_i^{\prime} \sptext{in} \widetilde{\T}^{q-1}, \; i = 1, 2, \cdots, l.
        \end{equation}
        % $\Upsilon = \rbr{\Upsilon_1\cdots\Upsilon_l}_k$, $\Upsilon^{\prime} = \rbr{\Upsilon_1^{\prime}\cdots\Upsilon_l^{\prime}}_k$ and $\Upsilon_{p_i} \sim \Upsilon_i^{\prime}$ in $\widetilde{\T}^{q-1}$ for $i = 1, 2, \cdots, l$.
    \end{itemize}
\end{definition}
% We present \Cref{exam_equival} to illustrate the equivalence relation given in Definition~\ref{def_equival_T}.
% \begin{example}[Equivalence of LN-trees]\label{exam_equival}
% Consider the four LN-trees shown in Figure~\ref{fig_equal_tree}.
% The LN-trees $\overline{\Upsilon}_5$ and $\overline{\Upsilon}_6$ are equivalent.
% By contrast, no pair among $\overline{\Upsilon}_6$, $\overline{\Upsilon}_7$, and $\overline{\Upsilon}_8$ is equivalent.
% \end{example}

Definition~\ref{def_equival_T} obviously leads to the following proposition, 
where the equivalence relation is extended to $\widetilde{\T} = \bigcup_{q \in \N^+} \widetilde{\T}^q$.

\begin{proposition}\label{prop_equivalT}
    Let $p,q \in \N^+$ and $\Upsilon, \Upsilon' \in \widetilde{\T}^q$. 
    Then $\Upsilon \sim \Upsilon'$ in $\widetilde{\T}^q$ if and only if $\Upsilon \sim \Upsilon'$ in $\widetilde{\T}^{q+p}$.
    Hence the relation ``$\sim$'' extends uniquely to $\widetilde{\T}$.
\end{proposition}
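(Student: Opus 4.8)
The plan is to reduce everything to the single-level statement, namely that for every $q \in \N^+$ and all $\Upsilon, \Upsilon' \in \widetilde{\T}^q$ one has $\Upsilon \sim \Upsilon'$ in $\widetilde{\T}^q$ if and only if $\Upsilon \sim \Upsilon'$ in $\widetilde{\T}^{q+1}$; the general $p$ then follows by iterating this one step at a time. The structural fact I would invoke is \Cref{prop_mono_T}: each $\Upsilon \in \widetilde{\T}^q$ with $q \ge 2$ that is not a single node admits the decomposition $\Upsilon = \rbr{\Upsilon_1 \cdots \Upsilon_l}_k$ with $l \ge 1$ and $\Upsilon_1, \cdots, \Upsilon_l \in \widetilde{\T}^{q-1}$, and this decomposition (the root label $k$, the number of branches $l$, and the ordered branch tuple) does not depend on the ambient set $\widetilde{\T}^{q'}$ in which $\Upsilon$ is viewed. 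Consequently the only difference between testing equivalence in $\widetilde{\T}^q$ and in $\widetilde{\T}^{q+1}$ is the level at which the branches are compared, namely $\widetilde{\T}^{q-1}$ versus $\widetilde{\T}^q$, and bridging exactly this one-level gap is the content of the induction.

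First I would prove the single-level statement by induction on $q$. For the base case $q = 1$, both $\Upsilon, \Upsilon'$ are single nodes $\rbr{\;}_k, \rbr{\;}_{k'}$; they are equivalent in $\widetilde{\T}^1$ precisely when $k = k'$, and the first bullet of \Cref{def_equival_T} applied at level $2$ shows the same condition characterizes equivalence in $\widetilde{\T}^2$. For the inductive step I would split into cases by the structural type of $\Upsilon$ and $\Upsilon'$. If one is a single node and the other is compound, neither bullet of \Cref{def_equival_T} can hold at either level, so both sides of the asserted equivalence are false. If both are single nodes, the base-case argument applies verbatim. The main case is when both are compound: writing $\Upsilon = \rbr{\Upsilon_1 \cdots \Upsilon_l}_k$ and $\Upsilon' = \rbr{\Upsilon_1' \cdots \Upsilon_{l'}'}_{k'}$, equivalence in $\widetilde{\T}^q$ requires $k = k'$, $l = l'$ and a permutation $\br{p_1, \cdots, p_l}$ with $\Upsilon_{p_i} \sim \Upsilon_i'$ in $\widetilde{\T}^{q-1}$, while equivalence in $\widetilde{\T}^{q+1}$ requires the same $k = k'$, $l = l'$ together with a permutation realizing $\Upsilon_{p_i} \sim \Upsilon_i'$ in $\widetilde{\T}^{q}$. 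Since the branches lie in $\widetilde{\T}^{q-1} \subset \widetilde{\T}^q$, the induction hypothesis identifies the two branch-level equivalences, whence the two conditions coincide.

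Having established the single-level statement, I would handle general $p$ by a second induction on $p$ with $q$ fixed. Because $\widetilde{\T}^q \subset \widetilde{\T}^{q+p-1}$ by \Cref{prop_mono_T}, both $\Upsilon$ and $\Upsilon'$ lie in $\widetilde{\T}^{q+p-1}$, so the single-level statement applies at that level and yields equivalence in $\widetilde{\T}^{q+p-1}$ iff in $\widetilde{\T}^{q+p}$; composing with the inductive hypothesis (equivalence in $\widetilde{\T}^q$ iff in $\widetilde{\T}^{q+p-1}$) closes the induction. Finally, to extend $\sim$ to $\widetilde{\T} = \bigcup_{q \in \N^+} \widetilde{\T}^q$, I would declare $\Upsilon \sim \Upsilon'$ whenever they are equivalent in some common $\widetilde{\T}^q$ containing both; such a $q$ exists by monotonicity, and the statement just proved guarantees the verdict is independent of the chosen $q$, which is exactly the well-definedness and uniqueness of the extension.

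As for the main obstacle, the argument is a structural induction with no hard analysis, so the only real care is bookkeeping. The delicate point I would watch is the canonicity of the decomposition $\rbr{\Upsilon_1 \cdots \Upsilon_l}_k$: I must ensure that passing from $\widetilde{\T}^q$ to $\widetilde{\T}^{q+1}$ changes neither $k$, $l$, nor the branch tuple, so that the induction hypothesis is applied to the very same branches; and I must correctly dispose of the mixed single-node/compound case to confirm that non-equivalence holds at both levels simultaneously.
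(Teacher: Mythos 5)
Your proposal is correct, and it matches the paper's (implicit) argument: the paper states this proposition without proof, declaring it an obvious consequence of \Cref{def_equival_T}, and your two-stage structural induction — the single-level case by induction on $q$, general $p$ by iteration, and the well-definedness of the extension to $\widetilde{\T}$ — is precisely the natural formalization of what the authors leave unsaid. The canonicity point you flag does hold, since LN-trees are ordered objects (branch order is part of a tree's identity, cf.\ \Cref{fig_equal_tree}), so the root number $k$, the branch count $l$, and the branch tuple of a compound tree are intrinsic to the tree and independent of the ambient level $\widetilde{\T}^{q'}$ in which it is viewed.
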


Combining \cref{def_tree0,def_order,def_weight,def_elediff,def_equival_T}, we immediately obtain the following proposition, which summarizes the shared properties of equivalent LN-trees.

\begin{proposition}[Properties preserved under equivalence]\label{prop_Labs_equ}
    For any $\Upsilon, \Upsilon^{\prime} \in \widetilde{\T}$ satisfying $\Upsilon \sim \Upsilon^{\prime}$, the following propositions hold
    \begin{itemize}
        \item[(i)] $\Upsilon$ and $\Upsilon^\prime$ have the same roots;
        \item[(ii)] $\abs{\Upsilon} = \abs{\Upsilon^{\prime}}$ and $\gamma\br{\Upsilon} = \gamma\br{\Upsilon^{\prime}}$;
        \item[(iii)] $F_{t}^{\Upsilon} = F_{t}^{\Upsilon^{\prime}}$, $F_{t, s}^{\Upsilon} = F_{t, s}^{\Upsilon^{\prime}}$ for $s \in (t, T)$, $t \in [0, T)$;
        \item[(iv)] $A_j^m\br{\Upsilon} = A_j^m\br{\Upsilon^{\prime}}$ for $j = 0, 1, \cdots, m$.
    \end{itemize}
\end{proposition}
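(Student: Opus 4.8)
The plan is to establish (i)--(iv) simultaneously by strong induction on the level $q$ with $\Upsilon, \Upsilon' \in \widetilde{\T}^q$. By \Cref{prop_equivalT} the relation ``$\sim$'' on $\widetilde{\T}$ restricts to ``$\sim$'' on each $\widetilde{\T}^q$, so it suffices to fix $q$ and assume $\Upsilon \sim \Upsilon'$ in $\widetilde{\T}^q$. The base case $q = 1$ is immediate: here $\Upsilon \sim \Upsilon'$ forces $\Upsilon = \rbr{\;}_k = \Upsilon'$, and each quantity in (ii)--(iv) is defined on $\widetilde{\T}^1$ through $k$ alone, so equality is trivial. For $q \geq 2$, \Cref{def_equival_T} supplies the structure $\Upsilon = \rbr{\Upsilon_1\cdots\Upsilon_l}_k$ and $\Upsilon' = \rbr{\Upsilon_1'\cdots\Upsilon_l'}_k$ with a common root label $k$ and branch count $l$, together with a permutation $\br{p_1,\ldots,p_l} \in \mathbb{A}_l$ satisfying $\Upsilon_{p_i} \sim \Upsilon_i'$ in $\widetilde{\T}^{q-1}$. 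Assertion (i) is then immediate, since both roots are the node numbered $k$.

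Assertion (iv) and the order and factorial parts of (ii) form the combinatorial core and share a single pattern. The recursions \eqref{eq_recur_alp}, \eqref{eq_recur_Ai}, and \eqref{eq_recur_A0} express $\abs{\Upsilon}$, $\gamma\br{\Upsilon}$, and $A_j^m\br{\Upsilon}$ as a fixed function of $k$ and $l$ applied to a \emph{symmetric} combination of the branch quantities: the sum $\sum_{i=1}^l \abs{\Upsilon_i}$, and the products $\prod_{i=1}^l \gamma\br{\Upsilon_i}$ and $\prod_{p=1}^l A_j^m\br{\Upsilon_p}$. The inductive hypothesis gives $\abs{\Upsilon_{p_i}} = \abs{\Upsilon_i'}$, $\gamma\br{\Upsilon_{p_i}} = \gamma\br{\Upsilon_i'}$, and $A_j^m\br{\Upsilon_{p_i}} = A_j^m\br{\Upsilon_i'}$ for each $i$, and reindexing these sums and products by $p$ leaves them unchanged. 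Equality of $\abs{\cdot}$, $\gamma\br{\cdot}$, and $A_j^m\br{\cdot}$ for $\Upsilon$ and $\Upsilon'$ follows. The symmetry order needs one extra observation. Writing $S\br{\Upsilon} = N_P\br{\Upsilon} \prod_{i=1}^l S\br{\Upsilon_i}$, the product is permutation-invariant exactly as above, while $N_P\br{\Upsilon}$ counts the branch permutations that fix the branch list up to equivalence and is therefore determined solely by the multiset of equivalence classes of $\Upsilon_1, \ldots, \Upsilon_l$---indeed it equals the product of the factorials of the multiplicities of these classes. The permutation $p$ and the inductive hypothesis show that $\Upsilon$ and $\Upsilon'$ carry the same such multiset, whence $N_P\br{\Upsilon} = N_P\br{\Upsilon'}$ and $S\br{\Upsilon} = S\br{\Upsilon'}$.

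The elementary differentials in (iii) are the crux and the point where I expect the real difficulty. By \eqref{eq_recur_F}, $F_{t,s}^{\Upsilon} = \partial_s^k \Et{\br{F_s^{\Upsilon_1} \cdot \partial_\theta}\cdots\br{F_s^{\Upsilon_l}\cdot\partial_\theta} F_t\br{s, \Theta_s}}$, and applying the inductive hypothesis at conditioning time $s$ gives $F_s^{\Upsilon_{p_i}} = F_s^{\Upsilon_i'}$ for every $i$ and every $s$. Thus $\Upsilon$ and $\Upsilon'$ generate the \emph{same} multiset of first-order operators $\bbr{F_s^{\Upsilon_p}\cdot\partial_\theta}_{p=1}^l$, and $F_{t,s}^{\Upsilon}$ and $F_{t,s}^{\Upsilon'}$ differ only through the order in which these operators are composed before $\partial_s^k$ and $\lim_{s\to t+}$ act.

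The hard part is to show that this composition is independent of the ordering. This is genuinely delicate because the stated convention---that each $\partial_\theta$ also differentiates the coefficients $F_s^{\Upsilon_i}$ to its right---means the factors are not manifestly commuting differential operators. I would resolve it by making precise the sense in which the already-evaluated branch differentials enter as coefficients: since each $F_s^{\Upsilon_i}$ is obtained by taking the conditional expectation and the limit, it carries no dependence on the dummy state $\theta$, so the operators $\br{F_s^{\Upsilon_p}\cdot\partial_\theta}$ commute and their composite is symmetric in the branch labels by equality of mixed partial derivatives. Reconciling this with the convention above---and thereby accounting for exactly which terms the product rule produces---is the step that demands care, and it is where the analytic content of the argument lies, in contrast to the purely combinatorial recursions governing (i), (ii), and (iv). Once reordering is shown to be immaterial, $F_{t,s}^{\Upsilon} = F_{t,s}^{\Upsilon'}$ follows, and passing to the limit $s \to t+$ yields $F_t^{\Upsilon} = F_t^{\Upsilon'}$, completing the induction.
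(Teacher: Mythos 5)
Your proof is correct, and it is essentially the paper's own approach: the paper states \Cref{prop_Labs_equ} with no written proof at all, asserting that it follows ``immediately'' by combining \cref{def_tree0,def_order,def_weight,def_elediff,def_equival_T}, and your strong induction on the level $q$ --- base case on $\widetilde{\T}^1$, then permutation-invariance of the symmetric sums and products appearing in the recursions \eqref{eq_recur_alp}, \eqref{eq_recur_Ai}, \eqref{eq_recur_A0} together with the inductive hypothesis along the permutation supplied by \Cref{def_equival_T} --- is precisely the formalization of that claim.

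The two points you single out as delicate are in fact the only places where ``immediately'' hides content, and in both cases the reading you adopt is forced, so your hedging can be removed. For the symmetry order: $N_P$ must indeed be understood, as you do, as counting permutations that fix the branch list up to $\sim$; under literal syntactic equality of LN-trees the conclusion $S\br{\Upsilon} = S\br{\Upsilon^{\prime}}$ would be false --- take $\Upsilon_1 = [[\;]_0[\;]_1]_0$ and $\Upsilon_2 = [[\;]_1[\;]_0]_0$, so that $\Upsilon_1 \sim \Upsilon_2$ but $\Upsilon_1 \neq \Upsilon_2$; then $\rbr{\Upsilon_1\Upsilon_2}_0 \sim \rbr{\Upsilon_1\Upsilon_1}_0$ while the strict-equality counts are $N_P = 1$ and $N_P = 2$, giving $S = 1$ versus $S = 2$. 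For part (iii): the reconciliation you defer is actually trivial, because each $F_s^{\Upsilon_i}$ is a conditional expectation in which the dummy variable has already been evaluated at $\Theta$, hence carries no $\theta$-dependence whatsoever; consequently the product rule generates no extra terms, the paper's convention that $\partial_{\theta}$ ``also acts on'' $F_s^{\Upsilon_i}$ is vacuous (it is meaningful only for $\partial_s$, which does hit the $s$-dependent coefficients), and the composite operator reduces to the contraction of $\partial_{\theta}^l F_t(s,\theta)\vert_{\theta=\Theta_s}$ against $F_s^{\Upsilon_1}\otimes\cdots\otimes F_s^{\Upsilon_l}$, which is invariant under branch reordering by equality of mixed partial derivatives (available under the regularity the paper assumes). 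With those two clarifications your induction closes completely.
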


\subsection{Unlabelled numbered trees}

\Cref{prop_Labs_equ} shows that two equivalent LN-trees are identical in all essential aspects and differ only in their representation.
For brevity, we introduce equivalence classes of LN-trees, called unlabelled numbered trees (ULN-trees), formalized in \Cref{def_Ntree}.
Two illustrative examples are shown in \Cref{fig_ntree}.

\begin{definition}[ULN-tree]\label{def_Ntree}
Let $\Upsilon$ be any LN-trees in $\widetilde{\T}$.
\begin{itemize}
\item[(i)] 
The ULN-tree $\dbr{\Upsilon}$ is defined as the equivalence class of $\Upsilon$ modulo ``$\sim$'', i.e. $\dbr{\Upsilon} := \{\Upsilon^{\prime} \in \widetilde{\T}: \Upsilon^{\prime} \sim \Upsilon\}$. 
If the context is clear, $\dbr{\Upsilon}$ is abbreviated as $\Upsilon$.

\item[(ii)] 
The root of $\dbr{\Upsilon}$ is defined to be the root of a representative $\Upsilon^{\prime} \in \dbr{\Upsilon}$.

\item[(iii)] 
Define $\alpha\br{\dbr{\Upsilon}}$ to be the cardinality of $\dbr{\Upsilon}$, i.e., $\alpha\br{\dbr{\Upsilon}} := \# \{\Upsilon^{\prime} \in \widetilde{\T}: \Upsilon^{\prime} \sim \Upsilon\}$.

\item[(iv)] 
For $q \in \N$, the sets $\T$ and $\T^q$ of ULN-trees are given by the quotient sets $\widetilde{\T} / \sim$ and $\widetilde{\T}^q / \sim$, respectively.
\end{itemize}
\end{definition}

\begin{figure}[t]
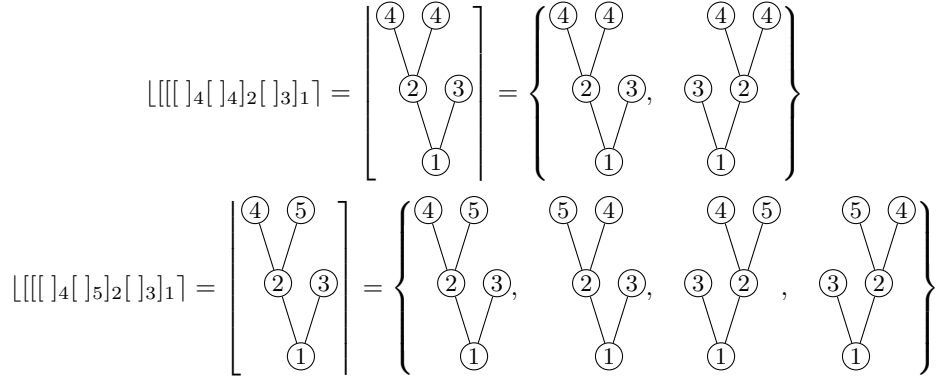

    \centering
    \begin{gather*}
        \dbr{[[[\;]_4 [\;]_4]_2 [\;]_3]_1} = \dbr{\alignbox{\tree{[1 [3] [2 [4] [4]]]}}} = \bbr{\alignbox{\tree{[1 [3] [2 [4] [4]]]}}, \quad \alignbox{\tree{[1 [2 [4] [4]] [3]]}}}\\
        \dbr{[[[\;]_4 [\;]_5]_2 [\;]_3]_1} = \dbr{\alignbox{\tree{[1 [3] [2 [5] [4]]]}}} = \bbr{\alignbox{\tree{[1 [3] [2 [5] [4]]]}}, \quad \alignbox{\tree{[1 [3] [2 [4] [5]]]}}, \quad \alignbox{\tree{[1 [2 [5] [4]] [3]]}}, \quad \alignbox{\tree{[1 [2 [4] [5]] [3]]}}}
    \end{gather*}
    \caption{A schematic diagram of the ULN-trees $\dbr{[[[\;]_4 [\;]_4]_2 [\;]_3]_1}$ and $\dbr{[[[\;]_4 [\;]_5]_2 [\;]_3]_1}$, where ``$\bbr{\cdots}$'' denotes a set. }\label{fig_ntree}
\end{figure}

Combining \Cref{prop_mono_T} with \Cref{def_Ntree}(iv) yields $\T = \bigcup_{q \in \N} \T^q$ and $\T^q \subset \T^{q+1}$ for all $q \in \N$, which is the ULN-tree counterpart of \Cref{prop_mono_T}(i).
The next definition extends the notions of order, factorial, elementary coefficients, and elementary differentials from LN-trees to ULN-trees.
\Cref{prop_Labs_equ} ensures these quantities are well defined, that is, independent of the chosen representative $\Upsilon^{\prime} \in \dbr{\Upsilon}$.
Supplementary Materials provide diagrams and the associated functions for ULN-trees in $\T_r$ for $r$ up to $5$.

\begin{definition}[Functions associated with ULN-trees]\label{def_func_Ntree}
    For $\dbr{\Upsilon} \in \T$, its order $\abs{\dbr{\Upsilon}}$, factorial $\gamma\br{\dbr{\Upsilon}}$, elementary coefficients $A_j^m\br{\dbr{\Upsilon}}$ and elementary differentials $F_{t}^{\dbr{\Upsilon}}$ are respectively defined as the counterparts of a representative $\Upsilon^{\prime} \in \dbr{\Upsilon}$, i.e.,
    \begin{equation*}
    \begin{aligned}
        &\abs{\dbr{\Upsilon}} := \abs{\Upsilon^{\prime}}, \quad \gamma\br{\dbr{\Upsilon}} := \gamma\br{\Upsilon^{\prime}}, \\
        &A_j^m\br{\dbr{\Upsilon}} := A_j^m\br{\Upsilon^{\prime}}, \quad  F_{t}^{\dbr{\Upsilon}} := F_{t}^{\Upsilon^{\prime}}, \quad  F_{t, s}^{\dbr{\Upsilon}} := F_{t, s}^{\Upsilon^{\prime}}
    \end{aligned}
    \end{equation*}
    for $s \in (t, T)$, $t \in [0, T)$ and $j = 0, 1, \cdots, m$.
\end{definition}

The following proposition serves as the ULN-tree analogue of \Cref{def_tree0} and \Cref{prop_mono_T}(ii).
It provides a recursive construction of ULN-trees and the sets $\T^q$ for $q \in \N$, thereby simplifying their representation.

\begin{proposition}[Recursive construction of ULN-trees]\label{prop_const_ntree}
For $k, l \in \N$ and $\dbr{\Upsilon_1}, \dbr{\Upsilon_2}, \cdots, \dbr{\Upsilon_l} \in \T$, we define 
\begin{equation}\label{eq_defprod}
    \rbr{\dbr{\Upsilon_1}\cdots\dbr{\Upsilon_l}}_k := \bbr{\Upsilon \in \widetilde{\T}: \Upsilon \sim \rbr{\Upsilon_1^{\prime}\cdots\Upsilon_l^{\prime}}_k, \Upsilon_i^{\prime} \in \dbr{\Upsilon_i}, \;\; i = 1, 2, \cdots, l}
\end{equation}
with the convention that $\rbr{\dbr{\Upsilon_1}\cdots\dbr{\Upsilon_l}}_k = \bbr{\rbr{\;}_k}$ if $l = 0$.
The following statements then hold:
\begin{itemize}
    \item[(i)] For any $\Upsilon_1, \Upsilon_2, \cdots, \Upsilon_l \in \widetilde{\T}$ and $k \in \N$, the set $\rbr{\dbr{\Upsilon_1}\cdots\dbr{\Upsilon_l}}_k$ defined in \eqref{eq_defprod} is a ULN-tree and satisfies
    \begin{equation}\label{eq_property_dbr}
        \rbr{\dbr{\Upsilon_1}\cdots\dbr{\Upsilon_l}}_k = \dbr{\rbr{\Upsilon_1 \cdots \Upsilon_l}_k}. 
    \end{equation}
    \item[(ii)] For $q \in \N$, we have
        \begin{equation*}
            \T^{q+1} = \bbr{\rbr{\dbr{\Upsilon_1} \cdots \dbr{\Upsilon_l}}_k: \bbr{\dbr{\Upsilon_i}}_{i=1}^l \subset \T^{q},\; k \in \N, \; l \in \N}.
        \end{equation*}
\end{itemize}
\end{proposition}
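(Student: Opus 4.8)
The plan is to prove part (i) first, since part (ii) follows almost immediately from it together with \Cref{prop_mono_T}(ii). For part (i) the entire content is the set identity $\rbr{\dbr{\Upsilon_1}\cdots\dbr{\Upsilon_l}}_k = \dbr{\rbr{\Upsilon_1 \cdots \Upsilon_l}_k}$; once this is in hand, the assertion that $\rbr{\dbr{\Upsilon_1}\cdots\dbr{\Upsilon_l}}_k$ is a ULN-tree is automatic, because the right-hand side is by definition the equivalence class of the LN-tree $\rbr{\Upsilon_1 \cdots \Upsilon_l}_k$ (which is legitimate for any $\Upsilon_i \in \widetilde{\T}$ and $k \in \N$ by \Cref{def_tree0}) and hence an element of $\T$. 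I would prove the identity by two inclusions, disposing of the degenerate case $l = 0$ first: there both sides reduce to $\bbr{\rbr{\;}_k}$, the left by the stated convention in \eqref{eq_defprod} and the right because, by \Cref{def_equival_T}, $\rbr{\;}_k$ is equivalent only to itself.

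For $l \geq 1$, the inclusion ``$\supseteq$'' is direct: given $\Upsilon \sim \rbr{\Upsilon_1 \cdots \Upsilon_l}_k$, choosing the representatives $\Upsilon_i' = \Upsilon_i \in \dbr{\Upsilon_i}$ in \eqref{eq_defprod} places $\Upsilon$ in $\rbr{\dbr{\Upsilon_1}\cdots\dbr{\Upsilon_l}}_k$. The inclusion ``$\subseteq$'' is the crux. Suppose $\Upsilon \sim \rbr{\Upsilon_1'\cdots\Upsilon_l'}_k$ with $\Upsilon_i' \sim \Upsilon_i$ for every $i$. I would first record the auxiliary fact that $\rbr{\Upsilon_1'\cdots\Upsilon_l'}_k \sim \rbr{\Upsilon_1 \cdots \Upsilon_l}_k$: invoking the second clause of \eqref{eq_equival_T} with the identity permutation $\br{p_1, \cdots, p_l} = \br{1, \cdots, l}$ reduces this to the branch-wise relations $\Upsilon_i \sim \Upsilon_i'$, which hold by symmetry of ``$\sim$''. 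Transitivity of ``$\sim$'' then yields $\Upsilon \sim \rbr{\Upsilon_1 \cdots \Upsilon_l}_k$, i.e. $\Upsilon \in \dbr{\rbr{\Upsilon_1 \cdots \Upsilon_l}_k}$. Throughout I would cite \Cref{prop_equivalT} so that all equivalences are compared at a common level $\widetilde{\T}^q$, making the symmetric and transitive manipulations of ``$\sim$'' unambiguous.

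With part (i) established, part (ii) follows by pushing \Cref{prop_mono_T}(ii) through the quotient map $\Upsilon \mapsto \dbr{\Upsilon}$. For $q \geq 1$, the shifted form of \eqref{eq_express_Tq} writes every $\Upsilon \in \widetilde{\T}^{q+1}$ as $\rbr{\Upsilon_1\cdots\Upsilon_l}_k$ with $\Upsilon_i \in \widetilde{\T}^q$; applying $\dbr{\cdot}$ and using part (i) rewrites its class as $\rbr{\dbr{\Upsilon_1}\cdots\dbr{\Upsilon_l}}_k$ with $\dbr{\Upsilon_i} \in \T^q$, which is the inclusion ``$\subseteq$''. Conversely, any $\rbr{\dbr{\Upsilon_1}\cdots\dbr{\Upsilon_l}}_k$ with $\dbr{\Upsilon_i} \in \T^q$ equals $\dbr{\rbr{\Upsilon_1\cdots\Upsilon_l}_k}$ by part (i), and $\rbr{\Upsilon_1\cdots\Upsilon_l}_k \in \widetilde{\T}^{q+1}$ by \eqref{eq_defT}, so it lies in $\T^{q+1}$; here part (i) also guarantees that the product depends only on the classes $\dbr{\Upsilon_i}$ and not on the chosen representatives, so the set-builder over $\bbr{\dbr{\Upsilon_i}}_{i=1}^l \subset \T^q$ is well posed. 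The base case $q = 0$ is handled by the convention: since $\T^0 = \emptyset$, only $l = 0$ occurs, and the right-hand side collapses to $\bbr{\dbr{\rbr{\;}_k}: k \in \N} = \T^1$.

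The main obstacle is the ``$\subseteq$'' inclusion in part (i), since it is exactly where the permutation structure built into \eqref{eq_equival_T} must be reconciled with the representative-independence one wants: replacing a single branch $\Upsilon_i$ by an equivalent $\Upsilon_i'$ has to be shown to preserve the equivalence class of the whole tree. The resolution is clean once one notices that the identity permutation already suffices for such a branch-swap and that symmetry and transitivity of ``$\sim$'' (taken as part of its being an equivalence relation, consistent with the use of equivalence classes in \Cref{def_Ntree}) supply the rest; the only care required is to quote \Cref{prop_equivalT} so that every equivalence is read at the same level.
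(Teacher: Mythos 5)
Your proposal is correct and follows essentially the same route as the paper's proof: part (i) by double inclusion, using the definition \eqref{eq_defprod}, the characterization $\dbr{\rbr{\Upsilon_1 \cdots \Upsilon_l}_k} = \bbr{\Upsilon' \in \widetilde{\T}: \Upsilon' \sim \rbr{\Upsilon_1 \cdots \Upsilon_l}_k}$, and the second clause of \eqref{eq_equival_T} (with the identity permutation and transitivity of ``$\sim$''), and part (ii) by pushing \Cref{prop_mono_T}(ii) through the quotient via part (i). Your explicit treatment of the degenerate cases $l=0$ and $q=0$ is slightly more careful than the paper's, but the underlying argument is the same.
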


\begin{proof}
    \textbf{(i)}
    Let $\Upsilon_1, \Upsilon_2, \cdots, \Upsilon_l \in \widetilde{\T}$ and $k \in \N$.
    To prove \eqref{eq_property_dbr}, we shall prove $\dbr{\rbr{\Upsilon_1 \cdots \Upsilon_l}_k} \subset \rbr{\dbr{\Upsilon_1}\cdots\dbr{\Upsilon_l}}_k$ and $\dbr{\rbr{\Upsilon_1 \cdots \Upsilon_l}_k} \supset \rbr{\dbr{\Upsilon_1}\cdots\dbr{\Upsilon_l}}_k$.

    For any $\rbr{\Upsilon_1 \cdots \Upsilon_l}_k \in \widetilde{\T}$, \Cref{def_Ntree}(i) implies that
    \begin{equation}\label{eq_rbrk_dbr}
        \dbr{\rbr{\Upsilon_1 \cdots \Upsilon_l}_k} = \bbr{\Upsilon^{\prime} \in \widetilde{\T}: \Upsilon^{\prime} \sim \rbr{\Upsilon_1 \cdots \Upsilon_l}_k}.
    \end{equation}
    Comparing \eqref{eq_defprod} with \eqref{eq_rbrk_dbr}, we deduce that $\dbr{\rbr{\Upsilon_1 \cdots \Upsilon_l}_k} \subset \rbr{\dbr{\Upsilon_1}\cdots\dbr{\Upsilon_l}}_k$.

    For any $\Upsilon \in \rbr{\dbr{\Upsilon_1}\cdots\dbr{\Upsilon_l}}_k$, by the definition in \eqref{eq_defprod}, there exist $\Upsilon_i^{\prime} \in \dbr{\Upsilon_i}$ for $i = 1, 2, \cdots, l$, such that,
    \begin{equation}\label{eq1_const_ntree}
        \Upsilon \sim \rbr{\Upsilon_1^{\prime}\cdots\Upsilon_l^{\prime}}_k, \quad \Upsilon_i^{\prime} \sim \Upsilon_i, \quad i = 1, 2, \cdots, l,
    \end{equation}
    where $\Upsilon_i^{\prime} \sim \Upsilon_i$ follows from \Cref{def_Ntree}(i).
    By \eqref{eq_equival_T} in \Cref{def_equival_T}, \eqref{eq1_const_ntree} implies $\Upsilon \sim \rbr{\Upsilon_1\cdots\Upsilon_l}_k$.
    Combining this with \eqref{eq_rbrk_dbr} yields $\Upsilon \in \dbr{\rbr{\Upsilon_1 \cdots \Upsilon_l}_k}$.
    Hence $\dbr{\rbr{\Upsilon_1 \cdots \Upsilon_l}_k} \supset \rbr{\dbr{\Upsilon_1}\cdots\dbr{\Upsilon_l}}_k$. 
    
    \textbf{(ii)}
    Let $q \in \N$.
    Firstly, we shall prove
    \begin{equation}\label{eq_Tqp1_supset}
        \T^{q+1} \supset \bbr{\rbr{\dbr{\Upsilon_1} \cdots \dbr{\Upsilon_l}}_k: \bbr{\dbr{\Upsilon_i}}_{i=1}^l \subset \T^{q},\; k \in \N, \; l \in \N}.
    \end{equation}
    For any $k, l \in \N$ and $\bbr{\dbr{\Upsilon_i}}_{i=1}^l \subset \T^{q}$, \Cref{def_Ntree} (iv) implies $\bbr{\Upsilon_i}_{i=1}^l \in \widetilde{\T}^q$.
    Consequently, \eqref{eq_property_dbr} and \Cref{prop_mono_T}(ii) yields
    \begin{equation*}
        \rbr{\dbr{\Upsilon_1} \cdots \dbr{\Upsilon_l}}_k = \dbr{\rbr{\Upsilon_1 \cdots \Upsilon_l}_k}, \quad \rbr{\Upsilon_1\cdots\Upsilon_l}_k \in \widetilde{\T}^{q+1},
    \end{equation*}
    respectively. 
    The above result shows that $\rbr{\dbr{\Upsilon_1} \cdots \dbr{\Upsilon_l}}_k \in \T^{q+1}$, recalling \Cref{def_Ntree} (iv), again.
    This establishes \eqref{eq_Tqp1_supset}.

    Then we shall prove
    \begin{equation}\label{eq_Tqp1_subset}
        \T^{q+1} \subset \bbr{\rbr{\dbr{\Upsilon_1} \cdots \dbr{\Upsilon_l}}_k: \bbr{\dbr{\Upsilon_i}}_{i=1}^l \subset \T^{q},\; k \in \N, \; l \in \N}.
    \end{equation}
    For any $\dbr{\Upsilon} \in \T^{q+1}$, by \Cref{def_Ntree} (iv) we have $\Upsilon \in \widetilde{\T}^{q+1}$.
    By \Cref{prop_mono_T}(ii), there exist $k, l \in \N$ and $\Upsilon_1, \Upsilon_2, \cdots, \Upsilon_l \in \widetilde{\T}^q$ such that $\Upsilon = \rbr{\Upsilon_1 \cdots \Upsilon_l}_k$.
    Hence $\dbr{\Upsilon} = \dbr{\rbr{\Upsilon_1 \cdots \Upsilon_l}_k} = \rbr{\dbr{\Upsilon_1}\cdots\dbr{\Upsilon_l}}_k$, where the second equality follows from \eqref{eq_property_dbr}.
    This establishes \eqref{eq_Tqp1_subset}.

    The desired result is obtained by combining \eqref{eq_Tqp1_supset} and \eqref{eq_Tqp1_subset}.
\end{proof}

By \Cref{prop_const_ntree}(ii), every ULN-tree $\Upsilon \in \T \backslash \T^1$ can be written as a composition of ULN-trees that themselves admit a similar decomposition, mirroring the recursive construction of LN-trees in \Cref{def_tree0}.
This recursive structure allows us to extend the notion of branches from \Cref{def_tree0} to ULN-trees, as formalized in \Cref{def_branch_ntree}.

\begin{definition}[Branch of ULN-tree]\label{def_branch_ntree}
    Any $[\;]_k \in \T^1$ is defined to have no branches.
    For $q = 2, 3, \cdots$, the ULN-tree $\rbr{\Upsilon_1 \cdots \Upsilon_l}_k \in \T^q \backslash \T^{q-1}$ is said to have a branch $\Upsilon^{\prime}$ if there exists $i_0 \in \bbr{1, 2, \cdots, l}$ such that either $\Upsilon^{\prime} = \Upsilon_{i_0}$ or $\Upsilon^{\prime}$ is a branch of $\Upsilon_{i_0}$.
\end{definition}

Building on \Cref{def_branch_ntree}, we next state \Cref{prop_sim_ordcondi}, which clarifies how the elementary coefficients of a ULN-tree are related to those of its branches.
This proposition will be used in \cref{sec_ord_condi} to simplify the order conditions for \Cref{sch_rk}.

\begin{proposition}\label{prop_sim_ordcondi}
    Let $\Upsilon \in \T \backslash \T^1$ and $\Upsilon^{\prime}$ be a branch of $\Upsilon$.
    If 
    \begin{equation}\label{eq_bar_eq0}
        A_i^m\br{\Upsilon^{\prime}} = 0, \;\; i = 0, 1, \cdots, m,
    \end{equation}
    then the above result also holds with $\Upsilon$ in place of $\Upsilon^{\prime}$, i.e.,
    \begin{equation}\label{eq1_bar_eq0}
        A_i^m\br{\Upsilon} = 0, \;\; i = 0, 1, \cdots, m. 
    \end{equation}
\end{proposition}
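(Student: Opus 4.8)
The plan is to argue by induction on the level $q$ for which $\Upsilon \in \T^q$, with the vanishing of a branch's elementary coefficients propagating upward through the product structure of the recursion. First I would record the mechanism that does all the work. By \Cref{prop_const_ntree}(i) and \Cref{def_func_Ntree}, the recursions \eqref{eq_recur_Ai}--\eqref{eq_recur_A0} remain valid at the ULN-tree level, so for any $\Upsilon = \rbr{\Upsilon_1 \cdots \Upsilon_l}_k \in \T \backslash \T^1$ each $A_i^m\br{\Upsilon}$, $i = 0, 1, \ldots, m$, is a linear combination of the products $\prod_{p=1}^l A_j^m\br{\Upsilon_p}$ taken over $j = 1, \ldots, m$. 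The key point is that the index $j$ appearing inside these products ranges only over $1, \ldots, m$; consequently, if some direct subtree $\Upsilon_{p_0}$ satisfies $A_j^m\br{\Upsilon_{p_0}} = 0$ for every $j = 1, \ldots, m$, then each such product vanishes and hence $A_i^m\br{\Upsilon} = 0$ for all $i = 0, 1, \ldots, m$.

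With this in hand I would set up the induction, the statement $P(q)$ being the proposition restricted to $\Upsilon \in \T^q \backslash \T^1$. For the base case $P(2)$, \Cref{prop_const_ntree}(ii) lets me write $\Upsilon = \rbr{\Upsilon_1 \cdots \Upsilon_l}_k$ with each $\Upsilon_p \in \T^1$; since trees in $\T^1$ have no branches (\Cref{def_branch_ntree}), any branch $\Upsilon'$ of $\Upsilon$ must be one of the direct subtrees $\Upsilon_{p_0}$, and the hypothesis $A_i^m\br{\Upsilon'} = 0$ for $i = 0, 1, \ldots, m$ supplies exactly the vanishing needed to invoke the mechanism above.

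For the inductive step, assuming $P(q)$, I would take $\Upsilon = \rbr{\Upsilon_1 \cdots \Upsilon_l}_k \in \T^{q+1} \backslash \T^q$ with $\Upsilon_p \in \T^q$ and a branch $\Upsilon'$ satisfying $A_i^m\br{\Upsilon'} = 0$ for all $i$. By \Cref{def_branch_ntree} either $\Upsilon' = \Upsilon_{p_0}$ for some $p_0$, which reduces immediately to the base-case argument, or $\Upsilon'$ is a branch of some $\Upsilon_{p_0}$. In the latter case $\Upsilon_{p_0}$ possesses a branch and therefore lies in $\T^q \backslash \T^1$; applying $P(q)$ to $\Upsilon_{p_0}$ and its branch $\Upsilon'$ yields $A_i^m\br{\Upsilon_{p_0}} = 0$ for all $i$, and the mechanism again forces $A_i^m\br{\Upsilon} = 0$.

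The arithmetic is trivial, since a single zero factor annihilates a product, so the step I expect to demand the most care is the structural bookkeeping attached to \Cref{def_branch_ntree}: cleanly distinguishing the two ways a branch can occur, verifying that the descent into $\Upsilon_{p_0}$ lands in a strictly lower level $\T^q \backslash \T^1$ so that $P(q)$ genuinely applies, and confirming that the coefficient recursion is used legitimately at the ULN-tree level (via \Cref{prop_const_ntree}(i) and \Cref{def_func_Ntree}) rather than only for LN-tree representatives.
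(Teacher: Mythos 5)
Your proposal is correct and follows essentially the same route as the paper's proof: induction on the level $q$, with the base case $\T^2 \backslash \T^1$ handled by noting that branches of such trees must be direct subtrees, the inductive step split into the two cases of \Cref{def_branch_ntree} (direct subtree versus branch of a direct subtree lying in $\T^q \backslash \T^1$), and the conclusion in each case obtained because every summand of \eqref{eq_recur_Ai}--\eqref{eq_recur_A0} contains the vanishing factor $A_j^m\br{\Upsilon_{p_0}}$. Your explicit remark that the inner index $j$ of the products ranges only over $1,\ldots,m$, and your justification of using the recursion at the ULN-tree level via \Cref{prop_const_ntree}(i) and \Cref{def_func_Ntree}, are points the paper leaves implicit, but the argument is the same.
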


\begin{proof}
    We first prove \eqref{eq1_bar_eq0} is valid for $\Upsilon = \rbr{\Upsilon_1\cdots \Upsilon_l}_k \in \T^2 \backslash \T^1$.
    By \Cref{prop_const_ntree}(ii), $\Upsilon \in \T^2 \backslash \T^1$ implies $\Upsilon_1, \Upsilon_2, \cdots, \Upsilon_l \in \T^1$. 
    By \Cref{def_branch_ntree}, each $\Upsilon_1, \Upsilon_2, \cdots, \Upsilon_l \in \T^1$ has no branches. 
    Consequently, if $\Upsilon^{\prime}$ is a branch of $\rbr{\Upsilon_1\cdots \Upsilon_l}_k$, then it must be one of $\Upsilon_1, \Upsilon_2, \cdots, \Upsilon_l$; that is, $\Upsilon^{\prime} = \Upsilon_{p_0}$ for some $p_0 \in \bbr{1, 2, \cdots, l}$. 
    Therefore, \eqref{eq_bar_eq0} yields
    \begin{equation*}
        \prod_{p = 1}^l A_i^m\br{\Upsilon_p} = 0, \quad i = 0, 1, \cdots, m.
    \end{equation*}
    Consequently,
    \begin{equation}\label{eq_Ajm}
        A_j^m\br{\Upsilon} = \sum_{j' = 1}^{m} a_{jj'} \br{c_j - c_{j'}}^{k} \prod_{p = 1}^l A_{j'}^m\br{\Upsilon_p} = 0, \quad j = 0, 1, \cdots, m,
    \end{equation}
    where we adopt the convention $a_{0j'} := b_{j'}$ and the first equality follows from the definitions in \eqref{eq_recur_Ai} and \eqref{eq_recur_A0}.
    This establishes \eqref{eq1_bar_eq0} for $\Upsilon \in \T^2 \backslash \T^1$.
    
    We now prove \eqref{eq1_bar_eq0} by mathematical induction.
    Specifically, for any $q \ge 2$, assuming \eqref{eq1_bar_eq0} holds for all $\Upsilon \in \T^q \backslash \T^{1}$, we then show that \eqref{eq1_bar_eq0} also holds for every $\Upsilon \in \T^{q+1} \backslash \T^{1}$.
    Actually, for any $\Upsilon \in \T^{q+1} \backslash \T^{1}$, \Cref{prop_const_ntree}(ii) implies that $\Upsilon$ can be expressed as $\Upsilon = \rbr{\Upsilon_1 \cdots \Upsilon_l}_k$ for some $k, l \in \N$ and $\bbr{\Upsilon_p}_{p=1}^l \subset \T^q$.
    By \Cref{def_branch_ntree}, there exists $p_0 \in \bbr{1, 2, \cdots, l}$, such that, $\Upsilon^{\prime} = \Upsilon_{p_0}$ or $\Upsilon^{\prime}$ is a branch of $\Upsilon_{p_0}$.
    If $\Upsilon^{\prime} = \Upsilon_{p_0}$, then $A_i^m\br{\Upsilon_{p_0}} = A_i^m\br{\Upsilon^{\prime}}$. Together with \eqref{eq_bar_eq0}, this implies
    \begin{equation}\label{eq_proof_brancheq0}
        A_i^m\br{\Upsilon_{p_0}} = 0, \;\; i = 0, 1, \cdots, m.
    \end{equation}
    Otherwise, $\Upsilon^{\prime}$ is a branch of $\Upsilon_{p_0}$. By \Cref{def_branch_ntree}, all ULN-trees in $\T^1$ have no branches; hence $\Upsilon_{p_0} \in \T^q \backslash \T^{1}$ and the induction hypothesis applies, which again yields \eqref{eq_proof_brancheq0}.
    Hence, in either case \eqref{eq_proof_brancheq0} holds and further implies
    \begin{equation}\label{eq1_recur_Ai}
        \prod_{p = 1}^l A_{i}^m\br{\Upsilon_{p}} = 0, \;\; i = 0, 1, \cdots, m.
    \end{equation}
    Further using the definitions in \eqref{eq_recur_Ai} and \eqref{eq_recur_A0}, we can deduce the same result as in \eqref{eq_Ajm} for $\Upsilon \in \T^{q+1} \backslash \T^{1}$, which again yields \eqref{eq1_bar_eq0}.
    This concludes the induction step and completes the proof.
\end{proof}

\section{Consistency analysis}\label{sec_theoana}

In this section, we begin by expanding the local truncation errors of \Cref{sch_rk} using the extended Butcher trees developed in \cref{sec_theo_tree}, and then derive the order conditions.

The analysis in this section relies on the following assumptions.

\begin{assumption}\label{assu_regu}
    The solution $\br{Y, Z}$ of the BSDE~\eqref{bsde} satisfies the Feynman-Kac formula~\eqref{eq_Ytut} with 
    \begin{equation*}
        u \in C_b^{r+1, 2r+4}, \quad f \in C_b^{r+1, 2r+3, 2r+3, 2r+3}. 
    \end{equation*}
    Moreover, the initial value $X_0$ in \eqref{bsde} satisfies $\mathbb{E}[|X_0|^k] < \infty$ for all $k \in \N^+$.
\end{assumption}

\begin{assumption}\label{assu_coeff}
    There exists a constant $C>0$, independent of the time partition $\pi_N$ defined in \eqref{eq_timepart}, such that for $1 \leq i \leq j \leq m$,
    \begin{equation}\label{eq_ajieq0_maxabc}
        \abs{a_{ij}} + \abs{b_i} + \abs{c_i} + \abs{c_0} \leq C, \;\; a_{ji} = 0, \;\; c_{i-1} - c_{i} > C^{-1}.
    \end{equation}
\end{assumption}

\Cref{assu_regu} ensures the well-posedness of the elementary differentials defined in \Cref{def_elediff}, and thus justifies the Taylor expansions used in the consistency analysis.
In principle, the boundedness assumptions on the derivatives of $u$ and $f$ in \Cref{assu_regu} can be relaxed to polynomial-growth conditions, at the cost of more delicate remainder estimates; see \cite[Appendix A]{Cai2026Deep} for an example of such an analysis.
For \Cref{assu_coeff}, the first requirement in \eqref{eq_ajieq0_maxabc} is mild under regular time partitions; 
the second is intrinsic to explicit schemes; 
the third reflects the order condition (1) in \Cref{tab_ordcond}, strengthened by a uniform constant $C^{-1}$, and is likewise mild for regular partitions.

The following proposition establishes sufficient conditions for the well-posedness of \Cref{def_elediff} for LN-trees with order up to $r+1$.

\begin{proposition}\label{prop_existence_F}
    Under \Cref{assu_regu}, for any $\Upsilon \in \widetilde{\T}_{r+1} := \big\{\Upsilon \in \widetilde{\T}: \abs{\Upsilon} \leq r+1 \big\}$,
    the limits $\lim_{s \to t+}$ in \eqref{eq_defFt0_Up} and \eqref{eq_recur_F} exist.
    Consequently, the elementary differentials $F_{t}^{\Upsilon}$ and $F_{t,s}^{\Upsilon}$ exist for all $t \in [0, T)$ and $s \in (t, T)$.
\end{proposition}

\begin{proof}
    The proof is divided into four parts.

    \subsubsection*{(i) The operator $\mathcal{H}_{t, s}$ and its iteration formula}

    For any $0 \leq t < s < T$ and any $\R^p$-valued function $g \in C_b^{0, 1}$, define the operator $\mathcal{H}_{t, s}$ by
    \begin{equation}\label{eq_def_H}
        \mathcal{H}_{t, s}(g) := \vbr{\big}{\Et{g\br{s, X_s}},\, \Et{g\br{s, X_s} \Delta W_{t, s}^{\top} \,/ (s - t)}}.
    \end{equation}
    Following the same argument as in \eqref{eq_ydwt_z}, with $g$ in place of $u$,
    we have 
    \begin{equation}
        \Et{g(s, X_s) \Delta W_{t, s}^{\top}} = (s - t) \Et{\mathcal{L}_1 g(s, X_s)},
    \end{equation}
    where the differential operator $\mathcal{L}_1$ is defined by
    \begin{equation*}
        \mathcal{L}_1 g(s, X_s) := \partial_x g(s, X_s) \sigma.
    \end{equation*}
    Then the operator $\mathcal{H}_{t, s}$ in \eqref{eq_def_H} admits the following representation:
    \begin{equation}\label{eq1_H}
        \mathcal{H}_{t, s}(g) = \vbr{\big}{\Et{g\br{s, X_s}},\, \Et{\mathcal{L}_1 g(s, X_s)}}
    \end{equation}
    for any $g \in C_b^{0, 1}$ and $0 \leq t < s \leq T$.
    By the moment assumption on $X_0$ and the definition of $X$ in \eqref{bsde}, it follows that, for any $t \in [0, T]$,
    \begin{equation*}
        \sup_{s \in [t, T]} \Et{|X_s|^k} < \infty, \quad k \in \N^+.
    \end{equation*}
    This bound, together with the representation \eqref{eq1_H}, implies that $\mathcal{H}_{t, s}(g)$ is well defined for any $g \in C_b^{0, 1}$.  
    
    For any $m \geq 1$, assume $g$ satisfies the following regularity: 
    \begin{equation}\label{eq_reg2}
        g \in C_b^{m, 2m+1}, 
    \end{equation}
    By the Dynkin formula \cite[Theorem 7.4.1]{Oksendal2003Stochastic}, we have 
    \begin{align*}
        \Et{g\br{s, X_s}} &= g\br{t, X_t} + \int_t^s \Et{\opL_0 g\br{r, X_r}} \mathrm{d}r,\\
        \Et{\mathcal{L}_1 g\br{s, X_s}} &= \mathcal{L}_1 g\br{t, X_t} + \int_t^s \Et{\opL_0 \mathcal{L}_1 g\br{r, X_r}} \mathrm{d}r,
    \end{align*} 
    where the differential operator $\opL_0$ is defined immediately after \eqref{eq_pde}.
    Substituting the above identities into \eqref{eq1_H} and differentiating both sides with respect to $s$, we obtain
    \begin{equation}\label{eq_partial_H}
        \partial_s \mathcal{H}_{t, s}(g) = \vbr{\big}{\Et{\opL_0 g\br{s, X_s}},\, \Et{\mathcal{L}_1 \opL_0 g\br{s, X_s}}} = \mathcal{H}_{t, s}(\opL_0 g),
    \end{equation}
    where the second equality follows from the commutation of $\opL_0$ and $\mathcal{L}_1$ together with the representation \eqref{eq1_H} of $\mathcal{H}_{t, s}$.
    As shown above, $\mathcal{H}_{t, s}(h)$ is well defined whenever $h \in C_b^{0, 1}$.
    The regularity \eqref{eq_reg2} allows us to apply \eqref{eq_partial_H} successively to $g, \opL_0 g, \cdots, \opL_0^{m-1} g$, and to obtain
    \begin{equation}\label{eq_iterate_H1}
        \partial_s^k \mathcal{H}_{t, s}(g) = \mathcal{H}_{t, s}(\opL_0^k g), \quad s \in (t, T), \;\; t \in [0, T), \;\; k = 0, 1, \cdots, m,
    \end{equation}
    for any $m \in \N$ and any $g$ satisfying \eqref{eq_reg2}.

    \subsubsection*{(ii) Elementary differentials for single-node trees}
    
    Comparing \eqref{eq_def_thetaF} with the definition of $\mathcal{H}_{t, s}$ in \eqref{eq_def_H}, we have 
    \begin{equation}\label{eq_FH}
        \Et{F_{t}\br{s, \Theta_s}} = \mathcal{H}_{t, s}(\tilde{f}), \quad s \in (t, T), \; t \in [0, T).
    \end{equation}
    where the function $\tilde{f}: [0, T] \times \R^d \to \R^p$ is defined by
    \begin{equation*}
    \tilde{f}(t, x) := f\br{t, x, u\br{t, x}, \mathcal{L}_1 u\br{t, x}}, \quad (t, x) \in [0, T] \times \R^d. 
    \end{equation*}
    Since $u \in C_b^{r+1, 2r+4}$ and $\mathcal{L}_1 u \in C_b^{r+1, 2r+3}$, repeated applications of the chain rule together with the regularity of $f$ yield 
    \begin{equation}\label{eq_reg_tf}
        \tilde{f} \in C_b^{r+1, 2r + 3}. 
    \end{equation}
    This regularity allows us to apply \eqref{eq_iterate_H1} with $g = \tilde{f}$ and $m = r + 1$ to obtain 
    \begin{equation}\label{eq_iterate_H2}
        \partial_s^k \Et{F_{t}\br{s, \Theta_s}} = \partial_s^k \mathcal{H}_{t, s}(\tilde{f}) = \mathcal{H}_{t, s}(\opL_0^k \tilde{f})
    \end{equation}
    for $k = 0, 1, \cdots, r+1$, where the first equality follows from \eqref{eq_FH} and the second equality follows from \eqref{eq_iterate_H1}.
    
    Recalling the second definition in \eqref{eq_recur_F}, the representation \eqref{eq_iterate_H2} implies the existence of $F_{t, s}^{[\;]_k}$ together with the representation
    \begin{equation}\label{eq_FH1}
        F_{t, s}^{[\;]_k} = \mathcal{H}_{t, s}(\opL_0^k \tilde{f}) = \vbr{\big}{\Et{\opL_0^k \tilde{f}\br{s, X_s}},\, \Et{\mathcal{L}_1 \opL_0^k \tilde{f}\br{s, X_s}}}
    \end{equation}
    for $k = 0, 1, \cdots, r+1$, where the second equality follows from \eqref{eq1_H}.
    Moreover, the regularity stated in \eqref{eq_reg_tf} allows us to pass to the limit $s \to t+$ in \eqref{eq_FH1}. 
    This yields the existence of $F_t^{[\;]_k}$ together with the representation
    \begin{equation}\label{eq_FH2}
        F_t^{[\;]_k} = \vbr{\big}{\opL_0^k \tilde{f}\br{t, X_t},\, \mathcal{L}_1 \opL_0^k \tilde{f}\br{t, X_t}}, \quad k = 0, 1, \cdots, r+1.
    \end{equation}

    \subsubsection*{(iii) Induction statement}
    
    We prove by induction on $q = 1, 2, \cdots, r+1$ that the following statement holds:
    for every $\Upsilon \in \widetilde{\T}_{r+1} \cap \widetilde{\T}^q$, there exists a deterministic function $g^{\Upsilon}: [0, T] \times \R^d \to \R^p$ such that the following hold:
    \begin{align}
        &F_{t, s}^{\Upsilon} = \mathcal{H}_{t, s}(g^{\Upsilon}), \quad s \in (t, T), \label{eq_ind1} \\
        &F_t^{\Upsilon} = \vbr{\big}{g^{\Upsilon}\br{t, X_t},\, \mathcal{L}_1 g^{\Upsilon}\br{t, X_t}}, \quad t \in [0, T), \label{eq_ind2} \\
        &g^{\Upsilon} \in C_b^{2 + r - \abs{\Upsilon}, \,3 + 2(r + 1 - \abs{\Upsilon})}. \label{eq_ind3}
    \end{align}
    By \Cref{def_tree0}, all $\Upsilon \in \widetilde{\T}^1$ are of the form $[\;]_k$ for some $k \in \N$. 
    Thus \eqref{eq_FH1} and \eqref{eq_FH2} imply the claim holds for $q = 1$ with $g^{[\;]_k} := \opL_0^k \tilde{f}$, $k = 0, 1, \cdots, r$.
    
    For any $\Upsilon \in \widetilde{\T}_{r+1}$, the inequality $\abs{\Upsilon} \leq r + 1$ implies that $g^{\Upsilon} \in C_b^{1, 3}$ for any $\Upsilon \in \widetilde{\T}_{r+1}$. 
    Then the result~\eqref{eq_iterate_H1} with $k=1$ guarantee the continuity of $s \mapsto F_{t, s}^{\Upsilon}$ at $s = t+$. 
    Hence, the induction statement above implies the existence of the two limits in \eqref{eq_defFt0_Up} and \eqref{eq_recur_F} for the corresponding $\Upsilon$.
    The remaining part of the proof is devoted to establishing the induction statement.
    
    \subsubsection*{(iv) Induction step}

    Assume that the claim holds for all trees in $\widetilde{\T}_{r+1} \cap \widetilde{\T}^{q-1}$. 
    By \Cref{prop_mono_T}(ii), any $\Upsilon \in (\widetilde{\T}_{r+1} \cap \widetilde{\T}^{q}) \backslash \widetilde{\T}^{q-1}$ admits the representation $\Upsilon = \rbr{\Upsilon_1 \cdots \Upsilon_l}_k$ with 
    \begin{equation*}
        \Upsilon_1, \cdots, \Upsilon_l \in \widetilde{\T}^{q-1}, \quad k \in \N, \quad l \in \N^+,
    \end{equation*}
    and $\Upsilon_1, \cdots, \Upsilon_l \in \widetilde{\T}_{r+1}$.
    Then, for each $j = 1, 2, \cdots, l$, the induction hypothesis~\eqref{eq_ind2} yields
    \begin{equation*}
        F_t^{\Upsilon_j} = \vbr{\big}{g^{\Upsilon_j}\br{t, X_t},\, \mathcal{L}_1 g^{\Upsilon_j}\br{t, X_t}}, \quad t \in [0, T).
    \end{equation*}
    Define the deterministic function $h^{\Upsilon}: [0, T] \times \R^d \to \R^p$ by
    \begin{equation}\label{eq_def_hUp}
        h^{\Upsilon}(t, x) := \prod_{j=1}^l \bbr{g^{\Upsilon_j}\br{t, x} \partial_y + \mathcal{L}_1 g^{\Upsilon_j}\br{t, x} \partial_z} f\br{t, x, u\br{t, x}, \mathcal{L}_1 u\br{t, x}},
    \end{equation}
    where $\partial_y$ and $\partial_z$ are applied to the $(y, z)$ arguments of $(t, x, y, z) \mapsto f(t, x, y, z)$. 
    
    Now we analyze the regularity of $h^{\Upsilon}$.
    The induction hypothesis~\eqref{eq_ind3} implies that for each $j = 1, 2, \cdots, l$,
    \begin{equation*}
        g^{\Upsilon_j} \in C_b^{r + 2 - \abs{\Upsilon_j}, \,3 + 2(r + 1 - \abs{\Upsilon_j})}, \quad \mathcal{L}_1 g^{\Upsilon_j} \in C_b^{r + 2 - \abs{\Upsilon_j}, \,2 + 2(r + 1 - \abs{\Upsilon_j})},
    \end{equation*}
    where the second inclusion uses that $\mathcal{L}_1$ acts only on $x$.
    The regularity of $u$ stated in \Cref{assu_regu} ensures that 
    \begin{equation*}
        u \in C_b^{r+1, 2r+4}, \quad \mathcal{L}_1 u \in C_b^{r+1, 2r+3}.
    \end{equation*}
    Moreover, the right-hand side of \eqref{eq_def_hUp} only involves $(y, z)$-derivatives of $f$ up to order $l$, and thus the regularity of $f$ in \Cref{assu_regu} ensures the resulting derivatives of $f$ are of class $C_b^{r+1,\, 2r+3,\, 2r+3 - l,\, 2r+3 - l}$.
    In summary, the right-hand side of \eqref{eq_def_hUp} consists of linear combinations and compositions of functions with regularity at least $C_b^{r_1, \,r_2}$, where
    \begin{align*}
        r_1 &:= \min\left\{r + 1, \,r + 2 - \max_{j = 1, 2, \cdots, l}\abs{\Upsilon_j}\right\}, \\ 
        r_2 &:= \min\left\{2r + 4, \, 2r + 3 - l, \, 2 + 2(r + 1 - \max_{j = 1, 2, \cdots, l}\abs{\Upsilon_j})\right\}. 
    \end{align*}
    Therefore, we have $h^{\Upsilon} \in C_b^{r_1, r_2}$.
    Using also the definition of $\abs{\Upsilon}$ in \eqref{eq_recur_alp}, we obtain
    \begin{align*}
        r_1 - k &\geq r + 1 - k - \sum_{j=1}^l \abs{\Upsilon_j} = r + 2 - \abs{\Upsilon} , \\ 
        r_2  - 2 k &\geq 2 + 2(r + 1 - \sum_{j=1}^l \abs{\Upsilon_j} - k - 1) = 3 + 2(r + 1 - \abs{\Upsilon}), 
    \end{align*}
    which implies 
    \begin{equation}\label{eq_reg_hUp}
        h^{\Upsilon} \in C_b^{2 + r - \abs{\Upsilon} + k,\, 3 + 2(r + 1 - \abs{\Upsilon} + k)}, \quad \opL_0^k h^{\Upsilon} \in C_b^{2 + r - \abs{\Upsilon},\, 3 + 2(r + 1 - \abs{\Upsilon})}.
    \end{equation}
    Here the second result is the counterpart of \eqref{eq_ind3} for $h^{\Upsilon}$. 
      
    By the induction hypothesis~\eqref{eq_ind2}, the second definition in \eqref{eq_recur_F} can be rewritten in terms of $\mathcal{H}_{t, s}$ and $h^{\Upsilon}$ as
    \begin{equation*}
        F_{t, s}^{\Upsilon} = \partial_s^k \Et{\br{F_s^{\Upsilon_1} \cdot \partial_{\theta}} \cdots \br{F_s^{\Upsilon_l} \cdot \partial_{\theta}} F_t\br{s, \Theta_s}} = \partial_s^k \mathcal{H}_{t, s}(h^{\Upsilon}) = \mathcal{H}_{t, s}(\opL_0^k h^{\Upsilon}), 
    \end{equation*}
    where the last equality follows from the regularity of $h^{\Upsilon}$ stated in \eqref{eq_reg_hUp} and the iterate formula \eqref{eq_iterate_H1}.
    The above identity implies the analogue of \eqref{eq_ind1} with $\opL_0^k h^{\Upsilon}$ in place of $g^{\Upsilon}$.  
    
    Applying the representation \eqref{eq1_H} once more, we obtain
    \begin{equation*}
        F_{t, s}^{\Upsilon} = \mathcal{H}_{t, s}(\opL_0^k h^{\Upsilon}) = \vbr{\big}{\Et{\opL_0^k h^{\Upsilon}\br{s, X_s}},\, \Et{\mathcal{L}_1 \opL_0^k h^{\Upsilon}\br{s, X_s}}}. 
    \end{equation*}
    In particular, $F_{t, s}^{\Upsilon}$ exists for all $s \in (t, T)$ and $t \in [0, T)$.
    Letting $s \to t+$, we also obtain the analogue of \eqref{eq_FH2} with $\opL_0^k h^{\Upsilon}$ in place of $g^{\Upsilon}$, that is,
    \begin{equation*}
        F_t^{\Upsilon} = \vbr{\big}{\opL_0^k h^{\Upsilon}\br{t, X_t},\, \mathcal{L}_1 \opL_0^k h^{\Upsilon}\br{t, X_t}}.
    \end{equation*}
    This completes the induction step and hence the proof.
\end{proof}

\subsection{Expansions of local truncation errors}\label{sec_expand_R}
 
For $n = 0, 1, \cdots, N-1$ and $i = 0, 1, \cdots, m$, define $R^{n,i}$ and $R^{n}$ as the local truncation errors of the $i$-th internal stage and the final stage of \Cref{sch_rk}, respectively, namely,
\begin{equation}\label{eq_Rni}
    R^{n, i} := \widehat{\Theta}^{n, i} - \Theta_{t_{n, i}}, \quad R^{n} := R^{n, 0} := \widehat{\Theta}^{n, 0} - \Theta_{t_{n}},
\end{equation}
where $\widehat{\Theta}^{n, i}$ are given by
\begin{equation}\label{eq_defhattht}
\left\{\begin{aligned}
    &\widehat{\Theta}^{n, i} := \Etni{\Theta_{t_{n+1}}} + \Delta t_n \sum_{j=i+1}^m a_{ij} \Etni{F_{t_{n, i}} \br{t_{n, j}, \widehat{\Theta}^{n, j}}}, \;\; i = m, \cdots, 1, \\
    &\widehat{\Theta}^{n, 0} := \Etn{\Theta_{t_{n+1}}} + \Delta t_n \sum_{j=1}^m b_{j} \Etn{F_{t_{n}} \br{t_{n, j}, \widehat{\Theta}^{n, j}}}.
\end{aligned}\right.
\end{equation}
The error term $R^{n}$ defined in \eqref{eq_Rni} is the local truncation error of \Cref{sch_rk} at $t = t_n$.

\Cref{thm_taylor} below is one of the main results of this paper. 
Since $R^{n}=R^{n,0}$, it yields a Taylor expansion of the local truncation error for \Cref{sch_rk}.

\begin{theorem}[Error expansion]\label{thm_taylor}
Under \Cref{assu_regu,assu_coeff}, the local truncation errors defined in \eqref{eq_Rni} admit the following expansion:
\begin{equation} \label{eq2_Rnj_indc}
    R^{n, j} = \sum_{\Upsilon \in \T_r} \frac{\alpha\br{\Upsilon}}{\gamma\br{\Upsilon}} A_j^m\br{\Upsilon} \br{\Delta t_n}^{\abs{\Upsilon}} F_{t_{n, j}}^{\Upsilon} + \mO\br{\br{\Delta t}^{r+1}}, \quad \Delta t \to 0
\end{equation}
for $j = 0, 1, \cdots, m$, where the remainder term in \eqref{eq2_Rnj_indc} holds uniformly for $0 \leq n \leq N-1$, and
\begin{equation}\label{eq_defTrm}
    \T_r := \bbr{\Upsilon \in \T: \abs{\Upsilon} \leq r}
\end{equation}
with $\T$ introduced in \Cref{def_Ntree}(iv), denoting the set of all ULN-trees.
\end{theorem}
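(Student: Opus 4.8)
The plan is to prove \eqref{eq2_Rnj_indc} by downward induction on the stage index $j$, exploiting that \Cref{sch_rk} is explicit so that stage $j$ depends only on the later stages $k>j$. The starting point is to integrate the reference ODE \eqref{eq_refode} from $t_{n,j}$ to $t_{n+1}$ and subtract the result from \eqref{eq_defhattht}, giving the additive representation
\begin{equation*}
    R^{n,j} = \Delta t_n \sum_{k=j+1}^{m} a_{jk}\, \mathbb{E}_{t_{n,j}}\rbr{F_{t_{n,j}}\br{t_{n,k}, \widehat{\Theta}^{n,k}}} - \int_{t_{n,j}}^{t_{n+1}} \mathbb{E}_{t_{n,j}}\rbr{F_{t_{n,j}}\br{s, \Theta_s}}\di s,
\end{equation*}
where the convention $c_0=1$ together with the $a_{ij}/b_i$ bookkeeping (reading $a_{0k}$ as $b_k$) lets the final stage $j=0$ be treated on the same footing. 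The base case $j=m$ is immediate: since $c_m=0$ we have $t_{n,m}=t_{n+1}$, the sum is empty, and $R^{n,m}=0$, which matches $A_m^m\br{\Upsilon}=0$ for every $\Upsilon$.

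For the inductive step I would expand the two terms separately. The integral term depends on $s$ only through the map $s\mapsto \mathbb{E}_{t_{n,j}}\rbr{F_{t_{n,j}}\br{s,\Theta_s}}$; Taylor expanding it around $s=t_{n,j}$ and invoking $F^{[\;]_\kappa}_{t_{n,j}}=\lim_{s\to t_{n,j}+}\partial_s^\kappa \mathbb{E}_{t_{n,j}}\rbr{F_{t_{n,j}}\br{s,\Theta_s}}$ from \eqref{eq_defFt0_Up} produces only single-node trees $[\;]_\kappa$, contributing exactly the term $-c_j^{\kappa+1}/\br{\kappa+1}$ inside $A_j^m\br{[\;]_\kappa}$. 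Hence every multi-node tree must originate from the Runge--Kutta sum. To extract it I would write $\widehat{\Theta}^{n,k}=\Theta_{t_{n,k}}+R^{n,k}$, Taylor expand $F_{t_{n,j}}\br{t_{n,k},\cdot}$ in its state argument, insert the induction hypothesis \eqref{eq2_Rnj_indc} for $R^{n,k}$, and finally Taylor expand in $s=t_{n,k}$ about $t_{n,j}$, the last step generating the root label $\kappa$ through the powers $\br{c_j-c_k}^\kappa$. Reassembling the branch differentials into $F_{t_{n,j}}^{\br{\Upsilon_1\cdots\Upsilon_l}_\kappa}$ and collecting the scheme coefficients reproduces the recursion in \eqref{eq_recur_Ai} and \eqref{eq_recur_A0} for $A_j^m$, while the accumulated power of $\Delta t_n$ is exactly $\abs{\Upsilon}=1+\kappa+\sum_{p}\abs{\Upsilon_p}$. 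Since $\T_r$ is finite, trees of order exceeding $r$ are absorbed into the remainder.

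The step I expect to be the main obstacle is the combinatorial bookkeeping that converts the prefactors $1/\kappa!$ and $1/l!$ from the two Taylor expansions, together with the multiplicity with which each tree is generated, into precisely the weight $\alpha\br{\Upsilon}S\br{\Upsilon}/\gamma\br{\Upsilon}$. A naive expansion in which the operators $F_s^{\Upsilon_p}\cdot\partial_\theta$ are treated as commuting constant-coefficient derivatives acting only on $F_{t_{n,j}}$ recovers the weight $\alpha\br{\Upsilon}/\gamma\br{\Upsilon}$ but misses the branch-symmetry factor $N_P\br{\Upsilon}$ carried by $S\br{\Upsilon}$. Recovering it requires honoring the prescription after \Cref{def_elediff}, namely that in \eqref{eq_recur_F} the derivatives $\partial_\theta$ also act on the nested differentials $F_s^{\Upsilon_p}$; it is exactly this action that supplies the missing factor $N_P\br{\Upsilon}$. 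I would therefore isolate a standalone combinatorial lemma establishing
\begin{equation*}
    \frac{\alpha\br{\Upsilon}S\br{\Upsilon}}{\gamma\br{\Upsilon}} = \frac{N_P\br{\Upsilon}}{\kappa!\,\prod_{a}\mu_a!}\prod_{p=1}^{l}\frac{\alpha\br{\Upsilon_p}S\br{\Upsilon_p}}{\gamma\br{\Upsilon_p}}, \quad \Upsilon = \br{\Upsilon_1\cdots\Upsilon_l}_\kappa,
\end{equation*}
where $\mu_a$ are the multiplicities of the distinct equivalence classes among $\Upsilon_1,\ldots,\Upsilon_l$; this follows by induction on $\abs{\Upsilon}$ from the recursions in \Cref{def_order} together with the class-size identity $\alpha\br{\Upsilon}=\br{l!/\prod_a\mu_a!}\prod_p\alpha\br{\Upsilon_p}$ read off from \Cref{def_Ntree}.

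Finally I would control the remainder uniformly in $n$. \Cref{assu_regu} ensures that every $F_t^\Upsilon$ is a finite limit as $s\to t+$ --- the delicate point being the singular weight $\Delta W_{t,s}^{\top}/\br{s-t}$ in $F_t$ from \eqref{eq_def_thetaF}, whose limit is governed by the Stein/integration-by-parts identity \eqref{eq_ydwt_z} as in \Cref{rmk_dsEZs} --- and bounds the Taylor remainders of both terms by $C\br{\Delta t}^{r+1}$ with $C$ independent of $n$. \Cref{assu_coeff} supplies the uniform bounds on $a_{ij},b_i,c_i$ and, crucially, the node separation $t_{n,k}-t_{n,j}=\br{c_j-c_k}\Delta t_n\geq C^{-1}\Delta t_n$, which keeps the weight $\Delta W/\br{t_{n,k}-t_{n,j}}$ and all conditional moments bounded as the error propagates through the backward stage recursion. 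Combining the expansions of the two terms with this remainder bound closes the induction and yields \eqref{eq2_Rnj_indc}.
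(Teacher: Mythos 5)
Your overall architecture coincides with the paper's proof: the same downward induction on the stage index with the $a_{0k}=b_k$ bookkeeping, the same splitting of $R^{n,j}$ into a quadrature part (producing single-node trees) and a Runge--Kutta perturbation part (producing multi-node trees via $\widehat{\Theta}^{n,k}=\Theta_{t_{n,k}}+R^{n,k}$, Taylor expansion in the state, insertion of the induction hypothesis, and Taylor expansion in time), and the same base case $j=m$. The one structural difference is that you run the induction directly at the ULN-tree level, carrying the class size $\alpha\br{\Upsilon}$ through the recursion, whereas the paper runs it at the LN-tree level with weight $S\br{\Upsilon}/\gamma\br{\Upsilon}$ (its \eqref{eq_Rnj_indc}) and groups equivalent LN-trees into classes only at the very end, which is how $\alpha\br{\Upsilon}$ enters \eqref{eq2_Rnj_indc}. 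Your class-size identity $\alpha\br{\Upsilon}=\br{l!/\prod_a\mu_a!}\prod_p\alpha\br{\Upsilon_p}$ is correct and is the right tool for that variant.

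The genuine gap is in your treatment of the symmetry factor. Your own accounting shows that the expansion, carried out with the increments treated as constants, closes the induction with weight $\alpha\br{\Upsilon}/\gamma\br{\Upsilon}$; to reach the stated weight $\alpha\br{\Upsilon}S\br{\Upsilon}/\gamma\br{\Upsilon}$ you then assert that the missing factor $N_P\br{\Upsilon}$ is ``supplied'' by letting $\partial_\theta$ act on the nested differentials $F_s^{\Upsilon_p}$. This assertion is not proved, and it cannot be the mechanism. In the inductive step the relevant object is the Taylor expansion \eqref{eq_EtniDelFninj} of $F_{t_{n,i}}\br{t_{n,j},\widehat{\Theta}^{n,j}}-F_{t_{n,i}}\br{t_{n,j},\Theta_{t_{n,j}}}$ in powers of the increment $R^{n,j}$, and $R^{n,j}$ is a fixed $\mathcal{F}_{t_{n,j}}$-measurable random variable, not a function of $\theta$: after substituting the induction hypothesis, every $\partial_\theta$ acts only on $F_{t_{n,i}}$ and never on the inserted branch differentials, so no additional factor can be generated there. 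Moreover, even granting the nested convention of \Cref{def_elediff}, the nested action does not multiply $F^{\Upsilon}$ by the scalar $N_P\br{\Upsilon}$; it produces genuinely different cross terms (derivatives of the branch differentials), so ``supplies the factor $N_P$'' is not a meaningful reduction. The paper produces this factor at an entirely different point: not in the differential calculus at all, but in the combinatorial passage from sums over ordered tuples of branch trees to sums over LN-trees, namely the step \eqref{eq_defIi}--\eqref{eq_defGijl} followed by \eqref{eq_tildeIi}, where the identity $N_P\br{\Upsilon}B_i^m\br{\Upsilon}=\frac{S\br{\Upsilon}}{\gamma\br{\Upsilon}}A_i^m\br{\Upsilon}$ of \eqref{eq_defBim} converts the tuple weights into the tree weights. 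Whatever one thinks of that counting, it is where the factor $N_P$ is actually negotiated, and your proposal has no substitute for it: as written, your argument proves an expansion with weights $\alpha\br{\Upsilon}/\gamma\br{\Upsilon}$ and then changes the weights by fiat. To close the proof you must either reproduce the paper's tuple-to-tree multiplicity argument, or prove an honest lemma showing that each equivalence class is generated by the expansion with total multiplicity $\alpha\br{\Upsilon}S\br{\Upsilon}/\gamma\br{\Upsilon}$; the discrepancy between that weight and the $\alpha\br{\Upsilon}/\gamma\br{\Upsilon}$ your naive expansion yields is real (they differ exactly by $N_P\br{\Upsilon}$ on trees with repeated branches, e.g.\ $\dbr{[[\;]_1[\;]_1]_0}$), and resolving it is the entire mathematical content of this step.
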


\begin{proof}
    
    The proof is divided into six parts.
    
    \subsubsection*{(i) Reformulation of \eqref{eq2_Rnj_indc}}
    
    We first show that \eqref{eq2_Rnj_indc} is implied by the following expansion:
    \begin{equation}\label{eq_Rnj_indc}
        R^{n, j} = \sum_{\Upsilon \in \widetilde{\T}_r} \frac{A_j^m\br{\Upsilon}}{\gamma\br{\Upsilon}} \br{\Delta t_n}^{\abs{\Upsilon}} F_{t_{n, j}}^{\Upsilon} + \mO\br{\br{\Delta t}^{r+1}}
    \end{equation}
    for $j = 0, 1, \cdots, m$, where
    \begin{equation}\label{eq_deftiTrm}
        \widetilde{\T}_r := \{\Upsilon \in \widetilde{\T}: \abs{\Upsilon} \leq r\},
    \end{equation}
    and $\widetilde{\T}$ denotes the set of all LN-trees introduced at the end of \Cref{def_tree0}.
    In fact, \Cref{prop_Labs_equ} ensures that the summands in \eqref{eq_Rnj_indc} coincide for any two equivalent LN-trees $\Upsilon$.
    Hence, we group the summands by equivalence classes, 
    and use $\alpha(\Upsilon)$ introduced in \Cref{def_Ntree}(iii) to indicate the number of LN-trees equivalent to $\Upsilon$. 
    This yields \eqref{eq2_Rnj_indc}.
    
    It remains to prove \eqref{eq_Rnj_indc} for $j = 0, 1, \cdots, m$. 
    We first verify the case $j = m$.
    From \eqref{eq_Rni} we obtain $R^{n, m} = 0$.
    In addition, by \Cref{def_weight}, the second condition in \eqref{eq_ajieq0_maxabc}, and $c_m = 0$ assumed in \Cref{sch_rk}, we obtain $A_m^m\br{\Upsilon} = 0$ for any $\Upsilon \in \widetilde{\T}$.
    These two observations establish \eqref{eq_Rnj_indc} for $j = m$.
    
    In the remainder of the proof, we shall prove \eqref{eq_Rnj_indc} by induction on $j = m, m-1, \cdots, 0$.
    Specifically, for any $i \in \bbr{0, 1, \cdots, m-1}$, assuming \eqref{eq_Rnj_indc} holds for $j = m, m-1, \cdots, i+1$, we then show that \eqref{eq_Rnj_indc} also holds for $j = i$.
    For notational convenience, set $a_{0i} := b_i$ for $i = 1, 2, \cdots, m$.
    With this convention, we substitute \eqref{eq_defhattht} into \eqref{eq_Rni} and rewrite the two equations in \eqref{eq_Rni} in a unified form
    \begin{equation}\label{eq_RniEni}
        R^{n, i} = E^{n, i} + \Delta t_n \sum_{j = i+1}^m a_{ij} \Etni{\Delta F_{t_{n, i}}^{n, j}}, \quad i = m, m-1, \cdots, 0,
    \end{equation}
    where
    \begin{align}
        & E^{n, i} := \Etni{\Theta_{t_{n+1}}} - \Theta_{t_{n, i}} + \Delta t_n \sum_{j=i+1}^m a_{ij} \Etni{F_{t_{n, i}}\br{t_{n, j}, \Theta_{t_{n, j}}}}, \label{eq_Eni} \\
        & \Delta F_{t_{n, i}}^{n, j} := F_{t_{n, i}} (t_{n, j}, \widehat{\Theta}^{n, j}) - F_{t_{n, i}}(t_{n, j}, \Theta_{t_{n, j}}). \label{eq_RFnj}
    \end{align}

    \subsubsection*{(ii) Expansion of $E^{n, i}$}
    For $i = 0, 1, \cdots, m$ and $l = 1, 2, \cdots, r$, it holds that
    \begin{equation}\label{eq_ptETheta_F}
        \partial_t^l \Etni{\Theta_t} = -\partial_t^{l-1} \Etni{F_{t_{n, i}}(t, \Theta_t)} = - F_{t_{n, i}, t}^{[\;]_{l-1}}, \quad t \in (t_{n, i}, T],
    \end{equation}
    where the first and second equalities follow from \eqref{eq_refode} and the second equation in \eqref{eq_defFt0_Up}, respectively.
    Using \eqref{eq_ptETheta_F}, we rewrite \eqref{eq_Eni} into
    \begin{equation}\label{eq_Eni2}
    \begin{aligned}
        E^{n, i} =\;& \int_{t_{n, i}}^{t_{n+1}} \partial_t \Etni{\Theta_t} \di t + \Delta t_n \sum_{j=1}^m a_{ij} F_{t_{n, i}, t_{n, j}}^{[\;]_0}\\
        =\;& -\int_{t_{n, i}}^{t_{n+1}} F_{t_{n, i}, t}^{[\;]_0} \di t + \Delta t_n \sum_{j=1}^m a_{ij} F_{t_{n, i}, t_{n, j}}^{[\;]_0},
    \end{aligned}
    \end{equation}
    where we have invoked the second condition in \eqref{eq_ajieq0_maxabc} to extend the summation range to $j = 1, \cdots, m$.
    By Taylor's expansion of $s \mapsto F_{t, s}^{[\;]_0}$ at $s = t$, 
    we have 
    \begin{equation}\label{eq_Fts}
        F_{t, s}^{[\;]_0} = \sum_{l=0}^{r-1} \frac{(s - t)^l}{l!} F_{t}^{[\;]_l} + \mO\br{\abs{s - t}^r}, 
    \end{equation}
    where $F_{t}^{[\;]_l}$ is defined in \eqref{eq_defFt0_Up}.
    Substituting \eqref{eq_Fts} into \eqref{eq_Eni2}, we obtain
    \begin{equation}\label{eq_expandEni}
    \begin{aligned}
        E^{n, i} =\;& \sum_{l=1}^r \vbr{\Big}{-\frac{c_i^l}{l!} + \sum_{j=1}^m \frac{a_{ij} \br{c_i - c_j}^{l-1}}{(l-1)!}} \br{\Delta t_n}^{l} F_{t_{n, i}}^{[\;]_{l-1}} + \mO\br{\abs{\Delta t_n}^{r+1}} \\
        =\;& \sum_{l=1}^{r} \frac{A_i^m\br{[\;]_{l-1}}}{\gamma\br{[\;]_{l-1}}} \br{\Delta t_n}^{l} F_{t_{n, i}}^{[\;]_{l-1}} + \mO\br{\abs{\Delta t_n}^{r+1}}                    \\
        =\;& \sum_{\Upsilon \in \widetilde{\T}_r^1} \frac{A_i^m\br{\Upsilon}}{\gamma(\Upsilon)} \br{\Delta t_n}^{\abs{\Upsilon}} F_{t_{n, i}}^{\Upsilon} + \mO\br{\abs{\Delta t_n}^{r+1}},
    \end{aligned}
    \end{equation}
    where the second equality follows from the definitions in \eqref{eq0_recur_alp} and \eqref{eq_defAi}, and $\widetilde{\T}_r^1$ in the last line is defined by
    \begin{equation}\label{eq_defTr1}
        \widetilde{\T}_r^1 := \bbr{\rbr{\;}_{k} \in \widetilde{\T}: 0 \leq k \leq r-1}. 
        %  = \widetilde{\T}_r \bigcap \widetilde{\T}^1. 
    \end{equation}

    \subsubsection*{(iii) Expansion of $\sum_{j = i+1}^m a_{ij} \mathbb{E}_{t_{n,i}}[\Delta F_{t_{n, i}}^{n, j}]$}
    
    By the first equality in \eqref{eq_expandEni}, we have $E^{n, i} = \mO\br{\Delta t_n}$.
    Substituting this into \eqref{eq_RniEni} in place of $E^{n, i}$, we obtain a prior estimate of $R^{n, i}$ as
    \begin{equation}\label{eq_preest_Rni}
        R^{n, i} = \mO\br{\Delta t_n}, \quad i = m, m-1, \cdots, 0.
    \end{equation}
    Applying the Taylor expansion to \eqref{eq_RFnj}, we obtain
    \begin{align}\label{eq_EtniDelFninj}
        \Etni{\Delta F_{t_{n, i}}^{n, j}} =\; & \sum_{l=1}^{r-1} \frac{1}{l!} \Etni{\br{R^{n, j} \cdot \partial_{\theta}}^l F_{t_{n, i}}\br{t_{n, j}, \Theta_{t_{n, j}}}} + \mO\br{\abs{\Delta t_n}^{r}}
    \end{align}
    for $j = m, m-1, \cdots, i+1$, where the remainder term follows from \eqref{eq_preest_Rni}.
    By the induction hypothesis, namely \eqref{eq_Rnj_indc} holds for $j = m, m-1, \cdots, i+1$, we substitute \eqref{eq_Rnj_indc} into \eqref{eq_EtniDelFninj} to obtain  
    \begin{equation}\label{eq_expand_dtDF}
        \Delta t_n \sum_{j = i+1}^m a_{ij} \Etni{\Delta F_{t_{n, i}}^{n, j}} = I_{i} + \mO\br{\br{\Delta t}^{r+1}},
    \end{equation}    
    where $I_{i}$ is defined by
    \begin{equation}\label{eq_defIi}
        I_{i} = \Delta t_n \sum_{j = i+1}^m \sum_{l=1}^{r-1} \frac{a_{ij}}{l!} E_{i, j, l},
    \end{equation}
    \begin{equation*}
        E_{i, j, l} := \mathbb{E}_{t_{n, i}}\Big[\vbr{\Big}{\sum_{\Upsilon \in \widetilde{\T}_r} \frac{A_j^m\br{\Upsilon}}{\gamma\br{\Upsilon}} \br{\Delta t_n}^{\abs{\Upsilon}} F_{t_{n, j}}^{\Upsilon} \cdot \partial_{\theta}}^l F_{t_{n, i}}\br{t_{n, j}, \Theta_{t_{n, j}}} \Big]. 
    \end{equation*}
    % \begin{equation}
    % \begin{aligned}
    %     I_{i} = \Delta t_n \sum_{j = i+1}^m \sum_{l=1}^{r-1} \frac{a_{ij}}{l!} \mathbb{E}_{t_{n, i}}\Big[\vbr{\Big}{\sum_{\Upsilon \in \widetilde{\T}_r} \frac{1}{\gamma\br{\Upsilon}} A_j^m\br{\Upsilon} \br{\Delta t_n}^{\abs{\Upsilon}} F_{t_{n, j}}^{\Upsilon} \cdot \partial_{\theta}}^l F_{t_{n, i}}\br{t_{n, j}, \Theta_{t_{n, j}}} \Big].
    % \end{aligned}
    % \end{equation}
    
    For any function $\phi(\cdot)$ defined on $\widetilde{\T}_r$, we have the expansion
    \begin{equation*}
        \vbr{\Big}{\sum_{\Upsilon \in \widetilde{\T}_r} \phi(\Upsilon)}^{l} = \sum_{\Upsilon_1 \in \widetilde{\T}_r} \sum_{\Upsilon_2 \in \widetilde{\T}_r} \cdots \sum_{\Upsilon_l \in \widetilde{\T}_r} \prod_{q=1}^l \phi(\Upsilon_q).
    \end{equation*}
    Applying the above expansion to \eqref{eq_defIi}, we obtain 
    \begin{equation}\label{eq0_Etni_med}
        I_{i} = \sum_{j = i+1}^m \sum_{l=1}^{r-1} \sum_{\Upsilon_1 \in \widetilde{\T}_r} \sum_{\Upsilon_2 \in \widetilde{\T}_r} \cdots \sum_{\Upsilon_l \in \widetilde{\T}_r} G\br{i, j, l, \Upsilon_1, \Upsilon_2, \cdots, \Upsilon_l},
    \end{equation}
    where
    \begin{equation}\label{eq_defGijl}
        \begin{aligned}
            & G\br{i, j, l, \Upsilon_1, \Upsilon_2, \cdots, \Upsilon_l}\\
            :=\; & \Delta t_n \frac{a_{ij}}{l!} \bbr{\prod_{p = 1}^l \frac{A_j^m\br{\Upsilon_p}}{\gamma\br{\Upsilon_p}} \br{\Delta t_n}^{\abs{\Upsilon_p}}} \Etni{\prod_{q = 1}^l \br{F_{t_{n, j}}^{\Upsilon_q} \cdot \partial_{\theta}} F_{t_{n, i}}\br{t_{n, j}, \Theta_{t_{n, j}}}} \\
            =\;  & \frac{a_{ij}}{l!} \bbr{\prod_{p = 1}^l \frac{A_j^m\br{\Upsilon_p}}{\gamma\br{\Upsilon_p}}} \br{\Delta t_n}^{\abs{\rbr{\Upsilon_1 \cdots \Upsilon_l}_0}} F_{t_{n, i}, t_{n, j}}^{\rbr{\Upsilon_1 \cdots \Upsilon_l}_0}
        \end{aligned}
    \end{equation}
    with the second equality following from the first equation in \eqref{eq_recur_alp} and the second equation in \eqref{eq_recur_F}.
    
    Applying the Taylor expansion to the function $t \mapsto F_{t_{n, i}, t}^{\rbr{\Upsilon_1 \cdots \Upsilon_l}_0}$ at $t = t_{n, i}$, we obtain
    \begin{equation}\label{eq_taylor_Ftninj}
        F_{t_{n, i}, t_{n, j}}^{\rbr{\Upsilon_1 \cdots \Upsilon_l}_0} = \sum_{k=0}^{r-1} \frac{\br{c_i - c_j}^k}{k!} \br{\Delta t_n}^k F_{t_{n, i}}^{\rbr{\Upsilon_1 \cdots \Upsilon_l}_k} + \mO\br{\br{\Delta t}^{r}},
    \end{equation}
    where the derivatives are expressed using the notation introduced in \eqref{eq_recur_F}.
    Inserting \eqref{eq_taylor_Ftninj} into \eqref{eq_defGijl}, we obtain
    \begin{align}\label{eq1_Etni_med}
        G\br{i, j, l, \Upsilon_1, \Upsilon_2, \cdots, \Upsilon_l} =\; & a_{ij} \sum_{k=0}^{r-1} g\br{i, j, k, l, \Upsilon_1, \Upsilon_2, \cdots, \Upsilon_l} + \mO\br{\br{\Delta t}^{r+1}}
    \end{align}
    with
    \begin{equation}\label{eq_defgijkl}
        \begin{aligned}
            & g\br{i, j, k, l, \Upsilon_1, \Upsilon_2, \cdots, \Upsilon_l}\\
            :=\; & \frac{\br{c_i - c_j}^k }{k! \; l!} \bbr{\prod_{p = 1}^l \frac{A_j^m\br{\Upsilon_p}}{\gamma\br{\Upsilon_p}}} \br{\Delta t_n}^{\abs{\rbr{\Upsilon_1 \cdots \Upsilon_l}_k}} F_{t_{n, i}}^{\rbr{\Upsilon_1 \cdots \Upsilon_l}_k},
        \end{aligned}
    \end{equation}
    where the exponents of $\Delta t_n$ are obtained by $\abs{\rbr{\Upsilon_1 \cdots \Upsilon_l}_k} = k + \abs{\rbr{\Upsilon_1 \cdots \Upsilon_l}_0}$ following from the first equation in \eqref{eq_recur_alp}.
    
    Substituting \eqref{eq1_Etni_med} into \eqref{eq0_Etni_med}, and then inserting the resulting equation into \eqref{eq_expand_dtDF}, we obtain
    \begin{equation}\label{eq_DtaEtniDF}
        \Delta t_n \sum_{j = i+1}^m a_{ij} \Etni{\Delta F_{t_{n, i}}^{n, j}} = \tilde{I}_i + \mO\br{\br{\Delta t}^{r+1}}
    \end{equation}
    with
    \begin{equation}\label{eq_defHikl}
        \begin{aligned}
            \tilde{I}_i :=\; & \sum_{j = i+1}^m \sum_{k=0}^{r-1} \sum_{l=1}^{r-1} \sum_{\Upsilon_1 \in \widetilde{\T}_r} \sum_{\Upsilon_2 \in \widetilde{\T}_r} \cdots \sum_{\Upsilon_l \in \widetilde{\T}_r}  a_{ij} g\br{i, j, k, l, \Upsilon_1, \Upsilon_2, \cdots, \Upsilon_l} \\
            =\;              & \sum_{k=0}^{r-1} \sum_{l=1}^{r-1} \sum_{\Upsilon_1 \in \widetilde{\T}_r} \sum_{\Upsilon_2 \in \widetilde{\T}_r} \cdots \sum_{\Upsilon_l \in \widetilde{\T}_r} \sum_{j = 1}^m a_{ij} g\br{i, j, k, l, \Upsilon_1, \Upsilon_2, \cdots, \Upsilon_l},
        \end{aligned}
    \end{equation}
    where, in the second line, we use the second condition in \eqref{eq_ajieq0_maxabc} to extend the summation range over $j$.

    \subsubsection*{(iv) Simplification of $\tilde{I}_i$}

    For any $\Upsilon = \rbr{\Upsilon_1\cdots\Upsilon_l}_k \in \widetilde{\T} \backslash \widetilde{\T}^1$, we define
    \begin{equation}\label{eq_deftildeAi}
        B_i^m\br{\Upsilon} := \sum_{j=1}^{m} a_{ij} \frac{\br{c_i - c_j}^k }{k! \; l!} \prod_{p = 1}^l \frac{1}{\gamma\br{\Upsilon_p}} A_j^m\br{\Upsilon_p}. 
    \end{equation}
    Rearranging the terms in \eqref{eq_deftildeAi}, we obtain
    \begin{equation}\label{eq_defBim}
        B_i^m\br{\Upsilon} = \frac{1}{k! \; l! \prod_{p = 1}^l \gamma\br{\Upsilon_p}} \sum_{j=1}^{m} a_{ij} \br{c_i - c_j}^k \prod_{p = 1}^l A_j^m\br{\Upsilon_p} = \frac{A_i^m\br{\Upsilon}}{\gamma\br{\Upsilon}},
    \end{equation}
    where the second equality is obtained by using the notation defined in \eqref{eq_recur_alp} and \eqref{eq_recur_Ai}.

    Multiplying both sides of \eqref{eq_defgijkl} by $a_{ij}$,
    summing over $j = 1, 2, \cdots, m$, and then invoking \eqref{eq_deftildeAi}, we obtain
    \begin{equation*}
        \sum_{j=1}^{m} a_{ij} g\br{i, j, k, l, \Upsilon_1, \Upsilon_2, \cdots, \Upsilon_l} = B_i^m\br{\rbr{\Upsilon_1\cdots\Upsilon_l}_k} \br{\Delta t_n}^{\abs{\rbr{\Upsilon_1 \cdots \Upsilon_l}_k}} F_{t_{n, i}}^{\rbr{\Upsilon_1 \cdots \Upsilon_l}_k}.
    \end{equation*}
    Inserting the above equation into \eqref{eq_defHikl}, we deduce
    \begin{equation}\label{eq_tildeIi}
    \begin{aligned}
        \tilde{I}_i =\; & \sum_{k=0}^{r-1} \sum_{l=1}^{r-1} \sum_{\Upsilon_1 \in \widetilde{\T}_r} \sum_{\Upsilon_2 \in \widetilde{\T}_r} \cdots \sum_{\Upsilon_l \in \widetilde{\T}_r} B_i^m\br{\rbr{\Upsilon_1\cdots\Upsilon_l}_k} \br{\Delta t_n}^{\abs{\rbr{\Upsilon_1 \cdots \Upsilon_l}_k}} F_{t_{n, i}}^{\rbr{\Upsilon_1 \cdots \Upsilon_l}_k} \\
        =\; & \sum_{\Upsilon \in \widetilde{\T}\br{r}} B_i^m\br{\Upsilon} \br{\Delta t_n}^{\abs{\Upsilon}} F_{t_{n, i}}^{\Upsilon}\\
        =\;& \sum_{\Upsilon \in \widetilde{\T}\br{r}} \frac{A_i^m\br{\Upsilon}}{\gamma\br{\Upsilon}} \br{\Delta t_n}^{\abs{\Upsilon}} F_{t_{n, i}}^{\Upsilon},
    \end{aligned}
    \end{equation}
    where $\widetilde{\T}\br{r}$ is defined by
    \begin{equation}\label{eq_defTbrr}
        \widetilde{\T}\br{r} := \bbr{\rbr{\Upsilon_1\cdots\Upsilon_l}_k \in \widetilde{\T}: \bbr{\Upsilon_i}_{i=1}^l \subset \widetilde{\T}_r, \; 0 \leq k \leq r-1, \; 1 \leq l \leq r-1},
    \end{equation}
    and the last equality follows from \eqref{eq_defBim}.
    
    \subsubsection*{(v) Simplification of $\widetilde{\T}_r \bigcap \widetilde{\T}\br{r}$}
    
    Comparing \eqref{eq_defTr1} with \eqref{eq_defTbrr}, we notice that \eqref{eq_defTbrr} enforces $l \ge 1$ for the elements of $\widetilde{\T}\br{r}$, which is in conflict with $l = 0$ for elements of $\widetilde{\T}_r^1$.
    Thus $\widetilde{\T}_r^1 \cap \widetilde{\T}\br{r} = \emptyset$, which further implies $\widetilde{\T}_r \backslash \widetilde{\T}_r^1 \supset \widetilde{\T}_r \bigcap \widetilde{\T}\br{r}$. 
        
    We now turn to prove $\widetilde{\T}_r \backslash \widetilde{\T}_r^1 \subset \widetilde{\T}_r \bigcap \widetilde{\T}\br{r}$. 
    By the definition of $\abs{\,\cdot\,}$ given in \Cref{def_order}, we have 
    \begin{equation}\label{eq_abs_lb}
        \abs{\Upsilon} \geq 1, \quad \forall \Upsilon \in \widetilde{\T}, 
    \end{equation}
    which can be proved by induction on $\widetilde{\T}^1, \widetilde{\T}^2, \cdots$. 
    From the definitions in \eqref{eq_deftiTrm} and \eqref{eq_defTr1}, any $\Upsilon \in \widetilde{\T}_r \backslash \widetilde{\T}_r^1$ can be written as $\Upsilon = \rbr{\Upsilon_1\cdots\Upsilon_l}_k$ with $\Upsilon_1, \cdots, \Upsilon_l \in \widetilde{\T}$ and
    \begin{equation}\label{eq_condiTrTr1}
        l \geq 1, \quad 1 + k + \sum_{i=1}^l \abs{\Upsilon_i} \leq r, \quad k \geq 0, 
    \end{equation}
    where $k \geq 0$ follows from \eqref{eq_defT}. 
    Using \eqref{eq_abs_lb}, the first two inequalities in \eqref{eq_condiTrTr1} imply
    \begin{equation}\label{eq2_condiTrTr1}
        l \leq r - 1, \quad k \leq r - 1, \quad \abs{\Upsilon_i} \leq r, \quad i = 1, 2, \cdots, l. 
    \end{equation}
    The last inequality in \eqref{eq_condiTrTr1} implies
    \begin{equation}\label{eq3_condiTrTr1}
        \Upsilon_i \in \widetilde{\T}_r, \quad i = 1, 2, \cdots, l
    \end{equation}
    with $\widetilde{\T}_r$ defined in \eqref{eq_deftiTrm}.
    Combining \eqref{eq_condiTrTr1}, \eqref{eq2_condiTrTr1}, and \eqref{eq3_condiTrTr1}, we conclude that $\Upsilon$ satisfies all the conditions in \eqref{eq_defTbrr}. 
    Hence $\widetilde{\T}_r \backslash \widetilde{\T}_r^1 \subset \widetilde{\T}\br{r}$, which further yields $\widetilde{\T}_r \backslash \widetilde{\T}_r^1 \subset \widetilde{\T}_r \bigcap \widetilde{\T}\br{r}$. 
    Consequently, we obtain
    \begin{equation}\label{eq_tildeTir}
        \widetilde{\T}_r \bigcap \widetilde{\T}\br{r} = \widetilde{\T}_r \backslash \widetilde{\T}_r^1.
    \end{equation}
    
    \subsubsection*{(vi) Closure of induction}
    
    By \eqref{eq_tildeTir}, $\widetilde{\T}\br{r}$ can be decomposed into the disjoint union of $\widetilde{\T}_r \backslash \widetilde{\T}_r^1$ and $\widetilde{\T}\br{r} \backslash \widetilde{\T}_r$.
    Hence \eqref{eq_tildeIi} implies
    \begin{equation}\label{eq_tilIi}
        \tilde{I}_i = \sum_{\Upsilon \in \widetilde{\T}_r \backslash \widetilde{\T}_r^1} \frac{A_i^m\br{\Upsilon}}{\gamma\br{\Upsilon}} \br{\Delta t_n}^{\abs{\Upsilon}} F_{t_{n, i}}^{\Upsilon} + \mO\br{\br{\Delta t}^{r+1}}, 
    \end{equation}
    where the remainder term collects all summands with $\Upsilon \in \widetilde{\T}\br{r} \backslash \widetilde{\T}_r$, and the order $\br{\Delta t}^{r+1}$ follows from the observation that $\abs{\Upsilon} \geq r+1$ for any $\Upsilon \in \widetilde{\T}\br{r} \backslash \widetilde{\T}_r$, following from the definition of $\widetilde{\T}_r$ in \eqref{eq_deftiTrm}.
    Inserting \eqref{eq_tilIi} into \eqref{eq_DtaEtniDF}, we obtain
    \begin{equation*}
    \begin{aligned}
        \Delta t_n \sum_{j = i+1}^m a_{ij} \Etni{\Delta F_{t_{n, i}}^{n, j}} =\;& \sum_{\Upsilon \in \widetilde{\T}_r \backslash \widetilde{\T}_r^1} \frac{A_i^m\br{\Upsilon}}{\gamma\br{\Upsilon}} \br{\Delta t_n}^{\abs{\Upsilon}} F_{t_{n, i}}^{\Upsilon} + \mO\br{\br{\Delta t}^{r+1}}.
    \end{aligned}
    \end{equation*}
    Inserting the above equation and \eqref{eq_expandEni} into \eqref{eq_RniEni}, we obtain
    \begin{equation}\label{eq_Rni_indc}
        R^{n, i} = \sum_{\Upsilon \in \widetilde{\T}_r} \frac{A_i^m\br{\Upsilon}}{\gamma\br{\Upsilon}} \br{\Delta t_n}^{\abs{\Upsilon}} F_{t_{n, i}}^{\Upsilon} + \mO\br{\br{\Delta t}^{r+1}}, 
    \end{equation}
    which is exactly \eqref{eq_Rnj_indc} with $j = i$.
    This completes the induction step and establishes \eqref{eq_Rnj_indc} for all $j = 0,1,\cdots,m$.
\end{proof}

\subsection{General order conditions}\label{sec_ord_condi}

Setting $j=0$ in \eqref{eq2_Rnj_indc} yields the expansion of the local truncation error $R^n = R^{n,0}$.
Further, if the coefficients of \Cref{sch_rk} satisfy
\begin{equation}\label{eq_ord_cond1}
    A_0^m\br{\Upsilon} = 0, \quad \forall \Upsilon \in \T_r,
\end{equation}
then $R^n= \mO\br{(\Delta t)^{r+1}}$.
Thus, \eqref{eq_ord_cond1} serves as the $r$th order conditions for \Cref{sch_rk}. 
However, these conditions are cumbersome to check in practice.
Since the number $\# \T_r$ of ULN-trees in $\T_r$ is $\#\T_r = 1, 3, 8, 21, 57$ for $r = 1, 2, 3, 4, 5$, respectively.
The collection of conditions quickly becomes unwieldy as $r$ increases. 

To address this issue, we introduce the following simplified $r$th order condition $C\br{r}$:
\begin{equation}\label{eq_defCr}
C\br{r}: \left\{\begin{aligned}
    & A_i^m\br{[\;]_0} = 0, \sptext{for} i = 0, 1, \cdots, m,    \\
    & A_0^m\br{\Upsilon} = 0, \sptext{for} \Upsilon \in \T_{r-},
\end{aligned}\right.
\end{equation}
where $\T_{r-} := \bbr{\Upsilon \in \T_r: \Upsilon \neq [\;]_0 \sptext{and} [\;]_0 \sptext{is not a branch of} \Upsilon}$ and $A_i^m\br{\cdot}$ are elementary coefficients given in \Cref{def_weight}.
In the following, we list the elements of $\T_{r-}$ for $r$ upto $5$:
\begin{align}
    & \T_{1-} = \emptyset, \quad \T_{2-} = \big\{[\;]_1 \big\}, \quad \T_{3-} = \T_{2-} \bigcup \big\{[\;]_2, \; [[\;]_{1}]_{0}\big\}, \label{eq1_enum_T}\\
    & \T_{4-} = \T_{3-} \bigcup \Big\{[\;]_3, \; [[\;]_2]_0, \; [[\;]_1]_1, \; [[[\;]_1]_0]_0 \Big\},  \label{eq2_enum_T}\\
    & \begin{aligned}
        \T_{5-} = \T_{4-} \bigcup \;&\Big\{[\;]_4, \; [[\;]_3]_0, \; [[\;]_2]_1, \; [[\;]_1]_2, \; [[[\;]_2]_0]_0, \; [[\;]_1 [\;]_1]_0, \; [[[\;]_1]_1]_0, \\
        & \;\; [[[\;]_1]_0]_1, \; [[[ [\;]_1]_0]_0]_0\Big\}.
    \end{aligned}  \label{eq3_enum_T}
\end{align}
\Cref{prop_sim_ordcondi} shows that $C\br{r}$ is sufficient for \eqref{eq_ord_cond1}.
The total number of constraints in $C\br{r}$ is $\# \T_{r-} + m + 1$, where $m$ is the number of stages of \Cref{sch_rk}, and $\# \T_{r-} = 0, 1, 3, 7, 16$ for $r = 1, 2, 3, 4, 5$, respectively.
Thus, for higher-order \Cref{sch_rk}, the condition $C\br{r}$ is typically much simpler than \eqref{eq_ord_cond1}.

The following theorem, a direct corollary of \Cref{thm_taylor} and \Cref{prop_sim_ordcondi}, constitutes the main result of this paper.
\begin{theorem}[Order conditions]\label{thm_consistent}
    Under \Cref{assu_regu,assu_coeff},
    if the coefficients of \Cref{sch_rk} satisfy the conditions in $C\br{r}$ in \eqref{eq_defCr},
    then \Cref{sch_rk} is consistent of order $r$ in the sense of
    \begin{equation}\label{eq_maxRn}
        \max_{0 \leq n \leq N-1} \bigl| R^n \bigr| = \mO\vbr{\big}{\br{\Delta t}^{r+1}},
    \end{equation}
    where $R^n$ is the local truncation error defined in \eqref{eq_Rni}.
\end{theorem}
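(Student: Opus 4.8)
The plan is to obtain the order conditions as a direct corollary of the error expansion in \Cref{thm_taylor} together with the branch-propagation property in \Cref{prop_sim_ordcondi}. First I would specialize \eqref{eq2_Rnj_indc} to $j = 0$; since $R^n = R^{n,0}$ and $t_{n,0} = t_n$ (because $c_0 = 1$), this gives
\[
    R^n = \sum_{\Upsilon \in \T_r} \frac{\alpha\br{\Upsilon} S\br{\Upsilon}}{\gamma\br{\Upsilon}} A_0^m\br{\Upsilon} \br{\Delta t_n}^{\abs{\Upsilon}} F_{t_n}^{\Upsilon} + \mO\br{\br{\Delta t}^{r+1}},
\]
with the remainder uniform in $n$. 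It therefore suffices to show that the simplified condition $C\br{r}$ in \eqref{eq_defCr} forces every leading coefficient $A_0^m\br{\Upsilon}$, $\Upsilon \in \T_r$, to vanish, i.e.\ that $C\br{r}$ implies the full condition \eqref{eq_ord_cond1}. Once this is in place, every summand above drops out and $R^n = \mO\br{\br{\Delta t}^{r+1}}$ uniformly in $n$, which is precisely \eqref{eq_maxRn}.

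The crux is thus to establish that $C\br{r}$ implies \eqref{eq_ord_cond1}, which I would do by partitioning $\T_r$ according to the role played by the trivial tree $[\;]_0$. I would write $\T_r$ as the disjoint union of the singleton $\bbr{[\;]_0}$, the set $\T_{r-}$ of those trees (other than $[\;]_0$) that do not have $[\;]_0$ as a branch, and the set $\T_r^{+}$ of trees that do have $[\;]_0$ as a branch. On $\bbr{[\;]_0}$ the identity $A_0^m\br{[\;]_0} = 0$ is the $i = 0$ instance of the first line of $C\br{r}$, while on $\T_{r-}$ the vanishing $A_0^m\br{\Upsilon} = 0$ is exactly the second line of $C\br{r}$.

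For the remaining trees $\Upsilon \in \T_r^{+}$ I would invoke \Cref{prop_sim_ordcondi}. The first line of $C\br{r}$ supplies $A_i^m\br{[\;]_0} = 0$ for every $i = 0, 1, \cdots, m$; since $[\;]_0$ is by construction a branch of $\Upsilon$, the tree $\Upsilon$ has at least one branch and hence lies in $\T \backslash \T^1$, so the hypotheses of \Cref{prop_sim_ordcondi} are met with the branch $\Upsilon^{\prime} = [\;]_0$. The proposition then propagates the vanishing up the tree, giving $A_i^m\br{\Upsilon} = 0$ for all $i$, and in particular $A_0^m\br{\Upsilon} = 0$. Collecting the three cases yields \eqref{eq_ord_cond1} and hence the theorem.

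I do not anticipate a substantive obstacle, as the statement is a corollary of two results already proved; the only point demanding care is the bookkeeping of the partition of $\T_r$. Specifically, one must verify that $[\;]_0$ is not a branch of itself (so that the pieces $\bbr{[\;]_0}$, $\T_{r-}$, and $\T_r^{+}$ are genuinely disjoint and exhaust $\T_r$) and that membership in $\T_r^{+}$ indeed forces $\Upsilon \in \T \backslash \T^1$, which is what licenses the use of \Cref{prop_sim_ordcondi}. The uniformity of the final $\mO\br{\br{\Delta t}^{r+1}}$ bound is inherited verbatim from the uniform remainder already asserted in \Cref{thm_taylor}.
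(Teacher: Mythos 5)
Your proposal is correct and follows essentially the same route as the paper, which presents this theorem as a direct corollary of \Cref{thm_taylor} and \Cref{prop_sim_ordcondi}; your three-way partition of $\T_r$ into $\bbr{[\;]_0}$, $\T_{r-}$, and the trees containing $[\;]_0$ as a branch is exactly the bookkeeping the paper leaves implicit, and your use of \Cref{prop_sim_ordcondi} with branch $\Upsilon^{\prime} = [\;]_0$ is the intended mechanism for showing $C\br{r}$ implies \eqref{eq_ord_cond1}. Your careful reading of $\T_{r-}$ as excluding $[\;]_0$ also matches the paper's enumeration (e.g.\ $\T_{1-} = \emptyset$), so no gap remains.
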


The order conditions in \eqref{eq_defCr} can be expressed explicitly as an algebraic system in the coefficients $a_{ij}, b_i, c_i$ of \Cref{sch_rk} by substituting the expressions for $A_i^m(\Upsilon)$ given in \Cref{def_weight}; see the Supplementary Materials for details.
However, this system does not match the list in \Cref{tab_ordcond} term by term, because the table has been reorganized to facilitate comparison with classical ODE results.
\Cref{prop_Cr_table} establishes the equivalence between these two formulations.
The proof is elementary but tedious, and is therefore deferred to the Supplementary Materials.

\begin{proposition}\label{prop_Cr_table}
    \Cref{assu_coeff} implies the conditions (1) and (2) in \Cref{tab_ordcond}.
    And the following equivalences hold:
    \begin{align*}
        & C(1) \iff \text{(3)--(4)},  && C(2) \iff \text{(3)--(5)},  && C(3) \iff \text{(3)--(7)},\\
        & C(4) \iff \text{(3)--(11)},  && C(5) \iff \text{(3)--(20)}. &&
    \end{align*}
    Here, $C(1), C(2), \cdots, C(5)$ are defined in \eqref{eq_defCr}, 
    and ``(i)--(j)'' denotes the set of conditions numbered (i) through (j) in \Cref{tab_ordcond}.
\end{proposition}

Given the $(r+1)$th-order convergence rate of the local truncation error in \eqref{eq_maxRn}, the global error is expected to be $\mO(\br{\Delta t}^{r})$, provided that \Cref{sch_rk} is stable. 
The stability analysis and global error estimates are deferred to the Supplementary Materials, as this work primarily focuses on consistency.

\section{Numerical tests}\label{sec_numtest}

In this section, we present numerical experiments to validate the consistency results established in \Cref{sec_theoana}.
We consider specific instances of \Cref{sch_rk} of orders up to $5$: the Euler scheme, RK($2$; $c_1$) and RK($3$; $c_1$, $c_2$) are given in \eqref{eq_sch_euler}, \eqref{eq_sch_RK2c1}, and \eqref{eq_sch_RK3c1c2}, respectively. 
The schemes RK($4$; $5$), RK($4$; $6$), RK($5$; $7$), and RK($5$; $8$) are constructed by numerically enforcing the order conditions in \Cref{tab_ordcond}; see \cref{sec_rk_sch} for details. 
The corresponding coefficients are listed in the Supplementary Materials.

All benchmark problems are instances of \eqref{bsde}. 
The conditional expectations $\Etn{\,\cdot\,}$ and $\Etni{\,\cdot\,}$ in \Cref{sch_rk} are approximated using the Sinc quadrature rule proposed in \cite[subsection 3.1]{Wang2022Sinc}. 
To reduce the cost of approximating the conditional expectations, we take $X$ and $W$ to be one-dimensional,
while allowing $(Y, Z)$ to be vector-valued to assess the order conditions for systems of BSDEs.
The time interval $[0,T]$ is uniformly partitioned into $N$ subintervals with time step $\Delta t := T/N$; the values of $N$ used are reported in the numerical results.
The spatial grid is uniform, namely $\R_h := \{i h : i = 0, \pm 1, \pm 2, \cdots\}$ with mesh size $h = 5 \times 10^{-3}$. 
All numerical experiments are conducted using Python 3.12.9 on a compute node equipped with an Intel Xeon 8358 processor and 256 GB of DDR4 RAM (3200 MHz).

We report the $L^{\infty}$ errors $\|Y^0 - Y_0\|_{\infty}$ and $\|Z^0 - Z_0\|_{\infty}$. The former is defined as
\begin{equation*}
    \|Y^0 - Y_0\|_{\infty} := \max_{x \in \R_{h} \cap [a, b]} \big|Y^0(x) - Y_0(x)\big|,
\end{equation*}
where $Y_0(x)$ denotes the exact solution value of $Y_t$ at $t=0$ with $X_0 = x$, i.e., $Y_0(x) := u(0, x)$ by \eqref{eq_Ytut}; $Y^0(x)$ denotes the numerical approximation to $Y_0(x)$ produced by \Cref{sch_rk}; $[a,b] \subset \R$ is the spatial domain of interest. 
The quantity $\|Z^0 - Z_0\|_{\infty}$ is defined analogously. 
Temporal convergence rates (CRs) are obtained by linear least-squares fits of $\log\|Y^0 - Y_0\|_{\infty}$ and $\log\|Z^0 - Z_0\|_{\infty}$ against $\log(T/N)$, with $N = 30,40,54,70,90$.
Running times (RTs) are measured with Python's standard function time.perf\_counter().

\subsection{Example 1}

\begin{table}[t]
\caption{Numerical results for Example 1}
\label{tab_explinsin}
\resizebox{\textwidth}{!}{%
\begin{tabular}{@{}lcccc|lcccc@{}}
\toprule
Scheme & $N$ & $\norm{Y^0 - Y_0}_{\infty}$ & $\norm{Z^0 - Z_0}_{\infty}$ & RT    (s) & Scheme & $N$ & $\norm{Y^0 - Y_0}_{\infty}$ & $\norm{Z^0 - Z_0}_{\infty}$ & RT    (s) \\ \midrule
Euler & 30 & 9.02E-02 & 5.96E-03 & 0.25 & RK(3; 3/4, 1/2) & 30 & 8.99E-06 & 1.50E-06 & 0.39 \\
 & 40 & 6.75E-02 & 4.46E-03 & 0.30 &  & 40 & 3.77E-06 & 6.25E-07 & 0.47 \\
 & 54 & 4.99E-02 & 3.29E-03 & 0.24 &  & 54 & 1.52E-06 & 2.52E-07 & 0.59 \\
 & 70 & 3.84E-02 & 2.54E-03 & 0.24 &  & 70 & 6.97E-07 & 1.15E-07 & 0.73 \\
 & 90 & 2.99E-02 & 1.97E-03 & 0.29 &  & 90 & 3.27E-07 & 5.39E-08 & 0.91 \\
 &  & \textbf{CR:} 1.007 & \textbf{CR:} 1.006 &  &  &  & \textbf{CR:} 3.017 & \textbf{CR:} 3.025 &  \\ \\
RK(2; 1/2) & 30 & 1.24E-03 & 1.23E-04 & 0.21 & RK(4; 5) & 30 & 5.42E-08 & 4.47E-09 & 0.83 \\
 & 40 & 6.94E-04 & 6.82E-05 & 0.25 &  & 40 & 1.72E-08 & 1.41E-09 & 1.03 \\
 & 54 & 3.78E-04 & 3.70E-05 & 0.32 &  & 54 & 5.21E-09 & 4.21E-10 & 1.29 \\
 & 70 & 2.24E-04 & 2.19E-05 & 0.39 &  & 70 & 1.85E-09 & 1.49E-10 & 1.59 \\
 & 90 & 1.35E-04 & 1.32E-05 & 0.49 &  & 90 & 6.77E-10 & 5.42E-11 & 1.95 \\
 &  & \textbf{CR:} 2.021 & \textbf{CR:} 2.032 &  &  &  & \textbf{CR:} 3.988 & \textbf{CR:} 4.016 &  \\ \\
RK(2; 2/3) & 30 & 8.43E-04 & 1.52E-04 & 0.26 & RK(4; 6) & 30 & 3.03E-08 & 2.36E-09 & 1.14 \\
 & 40 & 4.71E-04 & 8.47E-05 & 0.31 &  & 40 & 9.64E-09 & 7.44E-10 & 1.44 \\
 & 54 & 2.58E-04 & 4.62E-05 & 0.40 &  & 54 & 2.92E-09 & 2.23E-10 & 1.82 \\
 & 70 & 1.53E-04 & 2.73E-05 & 0.49 &  & 70 & 1.04E-09 & 7.88E-11 & 2.21 \\
 & 90 & 9.22E-05 & 1.65E-05 & 0.61 &  & 90 & 3.80E-10 & 2.88E-11 & 2.71 \\
 &  & \textbf{CR:} 2.014 & \textbf{CR:} 2.022 &  &  &  & \textbf{CR:} 3.984 & \textbf{CR:} 4.011 &  \\ \\
RK(3; 2/3, 1/3) & 30 & 1.90E-05 & 1.90E-06 & 0.34 & RK(5; 7) & 30 & 7.38E-10 & 5.06E-11 & 1.48 \\
 & 40 & 7.94E-06 & 7.91E-07 & 0.42 &  & 40 & 1.74E-10 & 1.18E-11 & 1.84 \\
 & 54 & 3.21E-06 & 3.18E-07 & 0.53 &  & 54 & 3.86E-11 & 2.61E-12 & 2.32 \\
 & 70 & 1.47E-06 & 1.45E-07 & 0.66 &  & 70 & 1.05E-11 & 7.13E-13 & 2.84 \\
 & 90 & 6.88E-07 & 6.77E-08 & 0.79 &  & 90 & 3.00E-12 & 2.08E-13 & 3.50 \\
 &  & \textbf{CR:} 3.020 & \textbf{CR:} 3.035 &  &  &  & \textbf{CR:} 5.011 & \textbf{CR:} 5.006 &  \\ \\
RK(3; 1/2, 1/4) & 30 & 3.35E-05 & 2.13E-06 & 0.39 & RK(5; 8) & 30 & 1.22E-10 & 5.53E-12 & 1.75 \\
 & 40 & 1.40E-05 & 8.83E-07 & 0.49 &  & 40 & 2.96E-11 & 1.22E-12 & 2.14 \\
 & 54 & 5.65E-06 & 3.54E-07 & 0.62 &  & 54 & 6.71E-12 & 2.58E-13 & 2.70 \\
 & 70 & 2.58E-06 & 1.61E-07 & 0.78 &  & 70 & 1.87E-12 & 6.58E-14 & 3.34 \\
 & 90 & 1.21E-06 & 7.54E-08 & 0.96 &  & 90 & 5.48E-13 & 1.69E-14 & 4.09 \\
 &  & \textbf{CR:} 3.023 & \textbf{CR:} 3.041 &  &  &  & \textbf{CR:} 4.927 & \textbf{CR:} 5.258 &  \\ \bottomrule
\end{tabular}%
}
\end{table}

We consider an example from \cite{zhao2010stable}, defined by \eqref{bsde} with
\begin{equation*}
\begin{aligned}
    X_t & = X_0 + W_t, \quad \varphi(x) = \exp \left(T^{2}\right) \ln\left(\sin x + 3\right),                                        \\
    f(t, x, y, z) & = \frac{1}{2}\rbr{\exp\left(t^{2}\right)-4 t y - 3 \exp \left(t^{2}-  y \exp \left(-t^{2}\right)\right) + z^{2} \exp \left(-t^{2}\right)}
\end{aligned}
\end{equation*}
for $t \in [0, T]$, $x \in \R$, $y \in \R$, and $z \in \R$.
The analytic solution is
\begin{equation*}
    Y_t = \exp\left(t^{2}\right) \ln\left(\sin X_t + 3\right),  \quad  Z_t = \exp \left(t^{2}\right) \frac{\cos X_t}{\sin X_t + 3}.
\end{equation*}
We set $T = 1$ and solve $\br{Y_0, Z_0}$ for $X_0 \in [0, \pi]$.
Table~\ref{tab_explinsin} summarizes the numerical results.
As stated at the beginning of this section, RK($r$; $\cdots$) denotes an instance of \Cref{sch_rk} whose coefficients satisfy the $r$th order conditions in \Cref{tab_ordcond}.
The results indicate that all schemes achieve the expected convergence orders, thereby supporting the consistency analysis in \Cref{sec_theoana}.

\subsection{Example 2}

\begin{table}[t]
\caption{Numerical results for Example 2}
\label{tab_expsigmoid}
\resizebox{\textwidth}{!}{%
\begin{tabular}{@{}lcccc|lcccc@{}}
\toprule
Scheme & $N$ & $\norm{Y^0 - Y_0}_{\infty}$ & $\norm{Z^0 - Z_0}_{\infty}$ & RT    (s) & Scheme & $N$ & $\norm{Y^0 - Y_0}_{\infty}$ & $\norm{Z^0 - Z_0}_{\infty}$ & RT    (s) \\ \midrule
Euler & 30 & 1.55E-03 & 5.22E-04 & 0.07 & RK(3; 3/4, 1/2) & 30 & 1.93E-07 & 9.25E-08 & 0.23 \\
 & 40 & 1.16E-03 & 3.90E-04 & 0.09 &  & 40 & 8.14E-08 & 3.90E-08 & 0.30 \\
 & 54 & 8.58E-04 & 2.88E-04 & 0.12 &  & 54 & 3.31E-08 & 1.59E-08 & 0.37 \\
 & 70 & 6.61E-04 & 2.22E-04 & 0.15 &  & 70 & 1.52E-08 & 7.28E-09 & 0.48 \\
 & 90 & 5.13E-04 & 1.72E-04 & 0.19 &  & 90 & 7.15E-09 & 3.43E-09 & 0.65 \\
 &  & \textbf{CR:} 1.009 & \textbf{CR:} 1.009 &  &  &  & \textbf{CR:} 3.000 & \textbf{CR:} 2.999 &  \\ \\
RK(2; 1/2) & 30 & 2.27E-05 & 5.77E-06 & 0.12 & RK(4; 5) & 30 & 5.19E-10 & 2.11E-10 & 0.51 \\
 & 40 & 1.28E-05 & 3.25E-06 & 0.15 &  & 40 & 1.64E-10 & 6.69E-11 & 0.67 \\
 & 54 & 7.00E-06 & 1.78E-06 & 0.21 &  & 54 & 4.94E-11 & 2.01E-11 & 0.90 \\
 & 70 & 4.17E-06 & 1.06E-06 & 0.28 &  & 70 & 1.75E-11 & 7.13E-12 & 1.18 \\
 & 90 & 2.52E-06 & 6.42E-07 & 0.36 &  & 90 & 6.41E-12 & 2.61E-12 & 1.55 \\
 &  & \textbf{CR:} 2.000 & \textbf{CR:} 1.999 &  &  &  & \textbf{CR:} 4.000 & \textbf{CR:} 4.000 &  \\ \\
RK(2; 2/3) & 30 & 2.27E-05 & 5.78E-06 & 0.15 & RK(4; 6) & 30 & 6.65E-10 & 2.72E-10 & 0.67 \\
 & 40 & 1.28E-05 & 3.25E-06 & 0.19 &  & 40 & 2.10E-10 & 8.59E-11 & 0.89 \\
 & 54 & 7.00E-06 & 1.79E-06 & 0.25 &  & 54 & 6.34E-11 & 2.59E-11 & 1.20 \\
 & 70 & 4.17E-06 & 1.06E-06 & 0.32 &  & 70 & 2.24E-11 & 9.16E-12 & 1.58 \\
 & 90 & 2.52E-06 & 6.43E-07 & 0.42 &  & 90 & 8.21E-12 & 3.35E-12 & 2.06 \\
 &  & \textbf{CR:} 2.000 & \textbf{CR:} 1.999 &  &  &  & \textbf{CR:} 4.000 & \textbf{CR:} 4.000 &  \\ \\
RK(3; 2/3, 1/3) & 30 & 1.92E-07 & 9.24E-08 & 0.21 & RK(5; 7) & 30 & 1.43E-12 & 9.06E-13 & 0.85 \\
 & 40 & 8.12E-08 & 3.90E-08 & 0.26 &  & 40 & 3.40E-13 & 2.15E-13 & 1.12 \\
 & 54 & 3.30E-08 & 1.59E-08 & 0.35 &  & 54 & 7.99E-14 & 4.82E-14 & 1.52 \\
 & 70 & 1.52E-08 & 7.28E-09 & 0.46 &  & 70 & 2.10E-14 & 1.34E-14 & 1.99 \\
 & 90 & 7.13E-09 & 3.43E-09 & 0.60 &  & 90 & 7.49E-15 & 3.48E-15 & 2.63 \\
 &  & \textbf{CR:} 3.000 & \textbf{CR:} 2.999 &  &  &  & \textbf{CR:} 4.823 & \textbf{CR:} 5.040 &  \\ \\
RK(3; 1/2, 1/4) & 30 & 1.92E-07 & 9.24E-08 & 0.25 & RK(5; 8) & 30 & 5.90E-12 & 3.71E-12 & 1.11 \\
 & 40 & 8.10E-08 & 3.90E-08 & 0.32 &  & 40 & 1.40E-12 & 8.81E-13 & 1.43 \\
 & 54 & 3.29E-08 & 1.59E-08 & 0.44 &  & 54 & 3.10E-13 & 1.97E-13 & 1.94 \\
 & 70 & 1.51E-08 & 7.28E-09 & 0.57 &  & 70 & 8.57E-14 & 5.39E-14 & 2.49 \\
 & 90 & 7.11E-09 & 3.43E-09 & 0.74 &  & 90 & 2.58E-14 & 1.50E-14 & 3.30 \\
 &  & \textbf{CR:} 3.000 & \textbf{CR:} 2.999 &  &  &  & \textbf{CR:} 4.956 & \textbf{CR:} 5.013 &  \\ \bottomrule
\end{tabular}%
}
\end{table}

This example has been considered by \cite{Chassagneux2014Linear,weinan2017deep,Warin2018Nesting}.
The BSDE takes the form \eqref{bsde} with
\begin{equation*}
    X_t = X_0 + \sigma W_t, \quad f(t, x, y, z) = \sigma \left(y - \dfrac{2+\sigma^2}{2\sigma^2}\right) z, \quad \varphi(x) = \dfrac{\exp(T + x)}{1+\exp(T + x)}
\end{equation*}
for $\sigma \in (0, \infty)$, $t \in [0, T]$, $x \in \R$, $y \in \R$ and $z \in \R$.
The analytic solution is
\begin{equation}
    Y_t = \dfrac{\exp(t + X_t)}{1 + \exp(t + X_t)}, \quad Z_t = \sigma Y_t (1 - Y_t).
\end{equation}
We set $T = 1$ and $\sigma = 0.25$ and compute $\br{Y_0, Z_0}$ for $X_0 \in [-1, 1]$. 
The numerical results are summarized in \Cref{tab_expsigmoid}. 
Similar to the previous example, the observed convergence rates here are in excellent agreement with the theoretical orders.

\subsection{Example 3}\label{sec_exam3}

\begin{table}[t]
\caption{Numerical results for Example 3}
\label{tab_exp_cdr}
\resizebox{\textwidth}{!}{%
\begin{tabular}{@{}lcccc|lcccc@{}}
\toprule
Scheme & $N$ & $\norm{Y^0 - Y_0}_{\infty}$ & $\norm{Z^0 - Z_0}_{\infty}$ & RT    (s) & Scheme & $N$ & $\norm{Y^0 - Y_0}_{\infty}$ & $\norm{Z^0 - Z_0}_{\infty}$ & RT    (s) \\ \midrule
Euler & 30 & 2.37E-03 & 4.54E-03 & 0.82 & RK(3; 3/4, 1/2) & 30 & 3.91E-07 & 6.08E-07 & 1.16 \\
 & 40 & 1.79E-03 & 3.42E-03 & 0.61 &  & 40 & 1.67E-07 & 2.60E-07 & 1.43 \\
 & 54 & 1.33E-03 & 2.55E-03 & 0.60 &  & 54 & 6.83E-08 & 1.07E-07 & 1.78 \\
 & 70 & 1.03E-03 & 1.97E-03 & 0.72 &  & 70 & 3.15E-08 & 4.93E-08 & 2.18 \\
 & 90 & 8.03E-04 & 1.54E-03 & 0.83 &  & 90 & 1.49E-08 & 2.33E-08 & 2.65 \\
 &  & \textbf{CR:} 0.986 & \textbf{CR:} 0.986 &  &  &  & \textbf{CR:} 2.974 & \textbf{CR:} 2.968 &  \\ \\
RK(2; 1/2) & 30 & 3.72E-05 & 7.18E-05 & 0.61 & RK(4; 5) & 30 & 3.47E-09 & 8.19E-09 & 2.74 \\
 & 40 & 2.11E-05 & 4.06E-05 & 0.73 &  & 40 & 1.12E-09 & 2.60E-09 & 3.36 \\
 & 54 & 1.17E-05 & 2.24E-05 & 0.94 &  & 54 & 3.41E-10 & 7.87E-10 & 4.27 \\
 & 70 & 7.00E-06 & 1.34E-05 & 1.13 &  & 70 & 1.22E-10 & 2.79E-10 & 5.13 \\
 & 90 & 4.25E-06 & 8.11E-06 & 1.38 &  & 90 & 4.49E-11 & 1.02E-10 & 6.31 \\
 &  & \textbf{CR:} 1.974 & \textbf{CR:} 1.984 &  &  &  & \textbf{CR:} 3.957 & \textbf{CR:} 3.988 &  \\ \\
RK(2; 2/3) & 30 & 3.56E-05 & 5.41E-05 & 0.76 & RK(4; 6) & 30 & 1.84E-09 & 5.37E-09 & 3.68 \\
 & 40 & 2.03E-05 & 3.07E-05 & 0.95 &  & 40 & 5.93E-10 & 1.71E-09 & 4.51 \\
 & 54 & 1.12E-05 & 1.69E-05 & 1.20 &  & 54 & 1.81E-10 & 5.16E-10 & 5.60 \\
 & 70 & 6.70E-06 & 1.01E-05 & 1.47 &  & 70 & 6.46E-11 & 1.83E-10 & 6.98 \\
 & 90 & 4.07E-06 & 6.14E-06 & 1.82 &  & 90 & 2.38E-11 & 6.72E-11 & 8.16 \\
 &  & \textbf{CR:} 1.976 & \textbf{CR:} 1.981 &  &  &  & \textbf{CR:} 3.958 & \textbf{CR:} 3.987 &  \\ \\
RK(3; 2/3,   1/3) & 30 & 5.14E-07 & 1.08E-06 & 1.00 & RK(5; 7) & 30 & 4.71E-11 & 1.88E-10 & 4.64 \\
 & 40 & 2.19E-07 & 4.59E-07 & 1.24 &  & 40 & 1.13E-11 & 4.49E-11 & 5.71 \\
 & 54 & 8.99E-08 & 1.87E-07 & 1.55 &  & 54 & 2.54E-12 & 1.01E-11 & 7.18 \\
 & 70 & 4.15E-08 & 8.61E-08 & 1.91 &  & 70 & 6.96E-13 & 2.75E-12 & 8.71 \\
 & 90 & 1.96E-08 & 4.06E-08 & 2.32 &  & 90 & 1.99E-13 & 7.84E-13 & 10.62 \\
 &  & \textbf{CR:} 2.972 & \textbf{CR:} 2.988 &  &  &  & \textbf{CR:} 4.976 & \textbf{CR:} 4.987 &  \\ \\
RK(3; 1/2,   1/4) & 30 & 6.79E-07 & 1.81E-06 & 1.27 & RK(5; 8) & 30 & 3.65E-11 & 1.46E-10 & 5.69 \\
 & 40 & 2.89E-07 & 7.69E-07 & 1.55 &  & 40 & 8.79E-12 & 3.52E-11 & 6.96 \\
 & 54 & 1.18E-07 & 3.14E-07 & 1.85 &  & 54 & 1.98E-12 & 7.92E-12 & 8.72 \\
 & 70 & 5.47E-08 & 1.45E-07 & 2.30 &  & 70 & 5.43E-13 & 2.18E-12 & 10.62 \\
 & 90 & 2.58E-08 & 6.84E-08 & 2.75 &  & 90 & 1.56E-13 & 6.24E-13 & 12.83 \\
 &  & \textbf{CR:} 2.976 & \textbf{CR:} 2.980 &  &  &  & \textbf{CR:} 4.966 & \textbf{CR:} 4.968 &  \\ \bottomrule
\end{tabular}%
}
\end{table}

We consider a system of BSDEs associated with the following three coupled convection-diffusion-reaction equations:
\begin{equation}\label{eq_cdr}
\left\{
\begin{aligned}
    &\frac{\partial u_i}{\partial t} + C_i \frac{\partial u_i}{\partial x} - D_i \frac{\partial^2 u_i}{\partial x^2} = r_i(u_1, u_2, u_3), \quad (t,x) \in (0,T] \times \R, \quad i=1,2,3,\\
    &u_i(0, x) = \varphi_i(x), \quad x \in \R, \quad i=1,2,3. 
\end{aligned}
\right.
\end{equation}
Here $u_i : (0,T] \times \R \to \R$ denotes the concentration of the $i$th chemical species; 
$C_i \in \R$ and $D_i > 0$ are its convection velocity and diffusion coefficient, respectively. 
The nonlinear reaction terms $r_i(u_1, u_2, u_3)$ are given by
\begin{equation}
\begin{aligned}
    r_1(u_1, u_2, u_3) &:= (C_1 - s) u_3 + D_1 k (1 - 2 u_1) u_3,\\
    r_2(u_1, u_2, u_3) &:= (C_2 - s) (1 - 2 u_1) u_3 + D_2 \br{k (1 - 4 u_2) + 2 u_3} u_3,\\
    r_3(u_1, u_2, u_3) &:= -k (C_3 - s) (1 - 2 u_1) u_3 - D_3 k \br{k (1 - 2 u_1)^2 + 2 u_3} u_3. 
\end{aligned} 
\end{equation}
The initial concentration profiles $\varphi_i(x)$ are selected so that the system \eqref{eq_cdr} admits an explicit travelling-wave solution
\begin{equation*}
    u_1(t, x) = U(x - s t), \quad u_2(t, x) = U(x - s t) \br{1 - U(x - s t)}, \quad u_3(t, x) = U^{\prime}(x - s t), % \quad z := x - s t,
\end{equation*}
where $U(z) := 1/ \vbr{\big}{1 + \exp(k z)}$, and $s, k > 0$ are parameters controlling the wave speed and steepness.

By the Feynman-Kac formula \eqref{eq_Ytut}, the solution to \eqref{eq_cdr} admits
\begin{equation*}
    u_i(T - t, X_{it}) = Y_{it}, \quad \partial_x u_i(T - t, X_{it}) \sqrt{2 D_i} = Z_{it},
\end{equation*}
where $X_{it} := X_{i0} + \sqrt{2 D_i}\, W_t$, which is valued in $\R$, and the pair $(Y_{it}, Z_{it})$ satisfies the BSDE system
\begin{equation}\label{eq_bsde_cdr}
    Y_{it} = \varphi_i(X_{iT}) + \int_t^T \Big[r_i(Y_{1s}, Y_{2s}, Y_{3s}) - \frac{C_i}{\sqrt{2 D_i}} Z_{is}\Big] \, \di s - \int_t^T Z_{is} \, \di W_s
\end{equation}
for $t \in [0, T]$ and $i = 1, 2, 3$.
In the numerical experiments, we set $T=1$, $C_i=0.2\,i$, $D_i=0.3+0.2\,i$, and $s=k=1$, and apply \Cref{sch_rk} to solve $(Y_{i0}, Z_{i0})$ for $X_{i0} \in [0,1]$ and $i=1,2,3$.
The results reported in \Cref{tab_exp_cdr} show that all schemes achieve the expected orders of convergence, thereby validating our consistency analysis of \Cref{sch_rk} for BSDE systems.

\section{Conclusions and future work}\label{sec_conclu}

In this work, we proposed a new class of explicit Runge--Kutta schemes for BSDEs.
A distinctive feature of these schemes is their concise formulation: the $Y$- and $Z$-components share the same Butcher coefficients, which enables a unified consistency analysis.
Building on this structure, we extended Butcher theory to the BSDE setting and obtained an elegant approach for deriving order conditions for schemes with an arbitrary number of stages and any prescribed target order.
The numerical experiments support the theoretical results.

A limitation of the current work is that the consistency theory applies only to BSDEs in \eqref{bsde} whose forward component has zero drift and a constant diffusion coefficient, namely, $X_t = X_0 + \sigma W_t$ with $\sigma \in \R^{d \times q}$ for $t \in [0, T]$.
This restriction is essential for transforming the integral equation \eqref{eq_intz} into the ODE system \eqref{eq_refode0} via the identity \eqref{eq_ydwt_z}.
For a more general forward SDE, for example,
\begin{equation*}
    X_t = X_0 + \int_0^t b(s, X_s) \, \di s + \int_0^t \sigma(s, X_s) \, \di W_s, \quad t \in [0, T],
\end{equation*}
where $b$ and $\sigma$ depend on $(s, X_s)$,
the term $\Et{Y_s \Delta W_{t, s}^{\top}}$ can no longer be expressed solely in terms of $Z_s$, as in \eqref{eq_ydwt_z}.
Consequently, a unified ODE formulation analogous to \eqref{eq_refode0} is no longer available. Instead, we obtain
\begin{equation*}
    \left\{
    \begin{aligned}
        \frac{\di \Et{Y_s}}{\di s} & = -\Et{f(s, X_s, Y_s, Z_s)}, \\
        \frac{\di \Et{Y_s \Delta W_{t, s}^{\top}}}{\di s} & = -\Et{f(s, X_s, Y_s, Z_s)\Delta W_{t, s}^{\top}} + \Et{Z_s},
    \end{aligned}\right.\quad s \in (t, T],
\end{equation*}
which is an integro-differential system rather than a pure ODE for $s \mapsto (\Et{Y_s}, \Et{Z_s})$.
Extending the Butcher theory to this broader setting remains an interesting and challenging direction for future research.

\bibliographystyle{spmpsci}
\bibliography{references}

\endgroup

\clearpage

\begingroup
\let\maketitle\relax
\renewcommand{\RequirePackage}[2][]{}
\renewcommand{\bibliographystyle}[1]{}
\renewcommand{\bibliography}[1]{}

\begin{center}
    \Large\textbf{Supplementary materials}
\end{center}

\maketitle
\appendix

This supplementary material is organized as follows.
\begin{itemize}
    \item \Cref{sec_ULNgraph} collects the graphs of ULN-trees in $\T_r$ together with their associated functions for orders $r \le 5$.
    \item \Cref{sec_RKnumopt} presents the coefficients of \Cref{sch_rk} for orders $r=4$ (with $m=5,6$ stages) and $r=5$ (with $m=7,8$ stages), obtained via numerical optimization.
    \item \Cref{sec_expcod} lists the explicit expressions of the order conditions $C(r)$ given in \eqref{eq_defCr} for orders $r \leq 5$.
    \item \Cref{sec_proof} provides the proof of \Cref{prop_Cr_table} given in \cref{sec_ord_condi}.
    \item \Cref{sec_errest} presents the stability analysis and global error estimates.
\end{itemize}

\section{ULN-trees with orders up to 5}\label{sec_ULNgraph}

In this section, each page is divided into two columns: the left column depicts a ULN-tree, with each node labeled by a number $k_i$; the right column lists all admissible choices of $k_i$ for which the ULN-tree shown on the left belongs to $\T_5$, along with the corresponding values of the functions $\abs{\cdot}$ and $\gamma(\cdot)$ defined in \Cref{def_order}.
To indicate the set $\T^{q}$ to which an ULN-tree $\Upsilon$ belongs, we define $L\br{\Upsilon} := \min \bbr{\ell \in \N^+ : \Upsilon \in \T^{\ell}}$.

\begin{figure}[H]
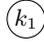

    \begin{minipage}{0.4\linewidth}
        \centering
        \tree{[$k_1$]}
        \figcaption{Graph of $[\;]_{k_1}$}
    \end{minipage}\quad
    \begin{minipage}{0.4\linewidth}
        \centering
        \tabcaption{Functions on $[\;]_k \in \T_5$}
        \begin{tabular}{@{}c|ccc@{}}
            \toprule
            $k_1$ & $L\br{\Upsilon}$ & $\abs{\Upsilon}$ & $\gamma\br{\Upsilon}$ \\ \midrule
            0 & 1 & 1 & 1 \\
            1 & 1 & 2 & 1 \\
            2 & 1 & 3 & 2 \\
            3 & 1 & 4 & 6 \\
            4 & 1 & 5 & 24 \\
            \bottomrule
        \end{tabular}
    \end{minipage}
\end{figure}
\begin{figure}[H]
    \begin{minipage}{0.4\linewidth}
        \centering
        \tree{[$k_1$ [$k_2$]]}
        \figcaption{Graph of $[[\;]_{k_2}]_{k_1}$}
    \end{minipage}\quad
    \begin{minipage}{0.45\linewidth}
        \centering
        \tabcaption{Functions on $[[\;]_{k_2}]_{k_1} \in \T_5$}
        \begin{tabular}{@{}cc|ccc@{}}
            \toprule
            $k_1$ & $k_2$ & $L\br{\Upsilon}$ & $\abs{\Upsilon}$ & $\gamma\br{\Upsilon}$ \\ \midrule
            0 & 0 & 2 & 2 & $1$  \\
            0 & 1 & 2 & 3 & $1$  \\
            0 & 2 & 2 & 4 & $2$  \\
            0 & 3 & 2 & 5 & $6$ \\
            1 & 0 & 2 & 3 & $1$  \\
            1 & 1 & 2 & 4 & $1$  \\
            1 & 2 & 2 & 5 & $2$  \\
            2 & 0 & 2 & 4 & $2$  \\
            2 & 1 & 2 & 5 & $2$  \\
            3 & 0 & 2 & 5 & $6$  \\
            \bottomrule
        \end{tabular}
    \end{minipage}
\end{figure}
\begin{figure}[H]
    \begin{minipage}{0.4\linewidth}
        \centering
        \tree{[$k_1$ [$k_3$] [$k_2$]]}
        \figcaption{Graph of $[[\;]_{k_2} [\;]_{k_3}]_{k_1}$}
    \end{minipage}\quad
    \begin{minipage}{0.5\linewidth}
        \centering
        \tabcaption{Functions on $[[\;]_{k_2} [\;]_{k_3}]_{k_1} \in \T_5$}
        \begin{tabular}{@{}ccc|ccc@{}}
            \toprule
            $k_1$ & $k_2$ & $k_3$ & $L\br{\Upsilon}$ & $\abs{\Upsilon}$ & $\gamma\br{\Upsilon}$ \\ \midrule
            0 & 0 & 0 & 2 & 3 & $2$  \\
            0 & 0 & 1 & 2 & 4 & $2$  \\
            0 & 0 & 2 & 2 & 5 & $4$  \\
            0 & 1 & 1 & 2 & 5 & $2$  \\
            1 & 0 & 0 & 2 & 4 & $2$  \\
            1 & 0 & 1 & 2 & 5 & $2$  \\
            2 & 0 & 0 & 2 & 5 & $4$  \\
            \bottomrule
        \end{tabular}
    \end{minipage}
\end{figure}

\begin{figure}[H]
    \begin{minipage}{0.25\linewidth}
        \centering
        \tree{[$k_1$ [$k_4$] [$k_3$] [$k_2$]]}        \figcaption{Graph of $[[\;]_{k_2} [\;]_{k_3} [\;]_{k_4}]_{k_1}$}
    \end{minipage}\quad\quad\quad
    \begin{minipage}{0.6\linewidth}
        \centering
        \tabcaption{Functions on $[[\;]_{k_2} [\;]_{k_3} [\;]_{k_4}]_{k_1} \in \T_5$}
        \begin{tabular}{@{}cccc|ccc@{}}
            \toprule
            $k_1$ & $k_2$ & $k_3$ & $k_4$ & $L\br{\Upsilon}$ & $\abs{\Upsilon}$ & $\gamma\br{\Upsilon}$ \\ \midrule
            0 & 0 & 0 & 0 & 2 & 4 & $6$  \\
            0 & 0 & 0 & 1 & 2 & 5 & $6$  \\
            1 & 0 & 0 & 0 & 2 & 5 & $6$  \\
            \bottomrule
        \end{tabular}
    \end{minipage}
\end{figure}

\begin{figure}[H]
    \begin{minipage}{0.25\linewidth}
        \centering
        \tree{[$k_1$ [$k_5$] [$k_4$] [$k_3$] [$k_2$]]}
        \figcaption{Graph of $[[\;]_{k_2} [\;]_{k_3} [\;]_{k_4} [\;]_{k_5}]_{k_1}$}
    \end{minipage}
    \begin{minipage}{0.74\linewidth}
        \centering
        \tabcaption{Functions on $[[\;]_{k_2} [\;]_{k_3} [\;]_{k_4} [\;]_{k_5}]_{k_1} \in \T_5$}
        \begin{tabular}{@{}ccccc|ccc@{}}
            \toprule
            $k_1$ & $k_2$ & $k_3$ & $k_4$ & $k_5$ & $L\br{\Upsilon}$ & $\abs{\Upsilon}$ & $\gamma\br{\Upsilon}$ \\ \midrule
            0 & 0 & 0 & 0 & 0 & 2 & 5 & $24$  \\
            \bottomrule
        \end{tabular}
    \end{minipage}
\end{figure}

\begin{figure}[H]
    \begin{minipage}{0.35\linewidth}
        \centering
        \tree{[$k_1$ [$k_2$ [$k_3$]]]}
        \figcaption{Graph of $[[[\;]_{k_3}]_{k_2}]_{k_1}$}
    \end{minipage}
    \begin{minipage}{0.64\linewidth}
        \centering
        \tabcaption{Functions on $[[[\;]_{k_3}]_{k_2}]_{k_1} \in \T_5$}
        \begin{tabular}{@{}ccc|ccc@{}}
            \toprule
            $k_1$ & $k_2$ & $k_3$ & $L\br{\Upsilon}$ & $\abs{\Upsilon}$ & $\gamma\br{\Upsilon}$ \\ \midrule
            0 & 0 & 0 & 3 & 3 & $1$  \\
            0 & 0 & 1 & 3 & 4 & $1$  \\
            0 & 0 & 2 & 3 & 5 & $2$  \\
            0 & 1 & 0 & 3 & 4 & $1$ \\
            0 & 1 & 1 & 3 & 5 & $1$ \\
            0 & 2 & 0 & 3 & 5 & $2$ \\
            1 & 0 & 0 & 3 & 4 & $1$  \\
            1 & 0 & 1 & 3 & 5 & $1$  \\
            1 & 1 & 0 & 3 & 5 & $1$  \\
            2 & 0 & 0 & 3 & 5 & $2$  \\
            \bottomrule
        \end{tabular}
    \end{minipage}
\end{figure}

\begin{figure}[H]
    \begin{minipage}{0.4\linewidth}
        \centering
        \tree{[$k_1$ [$k_4$] [$k_2$ [$k_3$]]]}        \figcaption{Graph of $[[[\;]_{k_3}]_{k_2} [\;]_{k_4}]_{k_1}$}
    \end{minipage}
    \begin{minipage}{0.59\linewidth}
        \centering
        \tabcaption{Functions on $[[[\;]_{k_3}]_{k_2} [\;]_{k_4}]_{k_1} \in \T_5$}
        \begin{tabular}{@{}cccc|ccc@{}}
            \toprule
            $k_1$ & $k_2$ & $k_3$ & $k_4$ & $L\br{\Upsilon}$ & $\abs{\Upsilon}$ & $\gamma\br{\Upsilon}$ \\ \midrule
            0 & 0 & 0 & 0 & 3 & 4 & $2$  \\
            0 & 0 & 0 & 1 & 3 & 5 & $2$  \\
            0 & 0 & 1 & 0 & 3 & 5 & $2$  \\
            0 & 1 & 0 & 0 & 3 & 5 & $2$  \\
            1 & 0 & 0 & 0 & 3 & 5 & $2$  \\
            \bottomrule
        \end{tabular}
    \end{minipage}
\end{figure}

\begin{figure}[H]
    \begin{minipage}{0.4\linewidth}
        \centering
        \tree{[$k_1$ [$k_2$ [$k_4$] [$k_3$]]]}        \figcaption{Graph of $[[[\;]_{k_3} [\;]_{k_4}]_{k_2}]_{k_1}$}
    \end{minipage}
    \begin{minipage}{0.59\linewidth}
        \centering
        \tabcaption{Functions on $[[[\;]_{k_3} [\;]_{k_4}]_{k_2}]_{k_1} \in \T_5$}
        \begin{tabular}{@{}cccc|ccc@{}}
            \toprule
            $k_1$ & $k_2$ & $k_3$ & $k_4$ & $L\br{\Upsilon}$ & $\abs{\Upsilon}$ & $\gamma\br{\Upsilon}$ \\ \midrule
            0 & 0 & 0 & 0 & 3 & 4 & $2$  \\
            0 & 0 & 0 & 1 & 3 & 5 & $2$  \\
            0 & 1 & 0 & 0 & 3 & 5 & $2$  \\
            1 & 0 & 0 & 0 & 3 & 5 & $2$  \\
            \bottomrule
        \end{tabular}
    \end{minipage}
\end{figure}

\begin{figure}[H]
    \begin{minipage}{0.3\linewidth}
        \centering
        \tree{[$k_1$ [$k_2$ [$k_5$] [$k_4$] [$k_3$]]]}       \figcaption{Graph of $[[[\;]_{k_3} [\;]_{k_4} [\;]_{k_5}]_{k_2}]_{k_1}$}
    \end{minipage}\quad\quad
    \begin{minipage}{0.64\linewidth}
        \centering
        \tabcaption{Functions on $[[[\;]_{k_3} [\;]_{k_4} [\;]_{k_5}]_{k_2}]_{k_1} \in \T_5$}
        \begin{tabular}{@{}ccccc|ccc@{}}
            \toprule
            $k_1$ & $k_2$ & $k_3$ & $k_4$ & $k_5$ & $L\br{\Upsilon}$ & $\abs{\Upsilon}$ & $\gamma\br{\Upsilon}$ \\ \midrule
            0 & 0 & 0 & 0 & 0 & 3 & 5 & $6$  \\
            \bottomrule
            \end{tabular}
    \end{minipage}
\end{figure}

\begin{figure}[H]
    \begin{minipage}{0.3\linewidth}
        \centering
        \tree{[$k_1$ [$k_4$] [$k_2$ [$k_5$] [$k_3$]]]}       \figcaption{Graph of $[[[\;]_{k_3} [\;]_{k_5}]_{k_2} [\;]_{k_4}]_{k_1}$}
    \end{minipage}\quad\quad
    \begin{minipage}{0.64\linewidth}
        \centering
        \tabcaption{Functions on $[[[\;]_{k_3} [\;]_{k_5}]_{k_2} [\;]_{k_4}]_{k_1} \in \T_5$}
        \begin{tabular}{@{}ccccc|ccc@{}}
            \toprule
            $k_1$ & $k_2$ & $k_3$ & $k_4$ & $k_5$ & $L\br{\Upsilon}$ & $\abs{\Upsilon}$ & $\gamma\br{\Upsilon}$ \\ \midrule
            0 & 0 & 0 & 0 & 0 & 3 & 5 & $4$  \\
            \bottomrule
            \end{tabular}
    \end{minipage}
\end{figure}

\begin{figure}[H]
    \begin{minipage}{0.3\linewidth}
        \centering
        \tree{[$k_1$ [$k_5$] [$k_4$] [$k_2$ [$k_3$]]]}       \figcaption{Graph of $[[[\;]_{k_3}]_{k_2} [\;]_{k_4} [\;]_{k_5}]_{k_1}$}
    \end{minipage}\quad\quad
    \begin{minipage}{0.64\linewidth}
        \centering
        \tabcaption{Functions on $[[[\;]_{k_3}]_{k_2} [\;]_{k_4} [\;]_{k_5}]_{k_1} \in \T_5$}
        \begin{tabular}{@{}ccccc|ccc@{}}
            \toprule
            $k_1$ & $k_2$ & $k_3$ & $k_4$ & $k_5$ & $L\br{\Upsilon}$ & $\abs{\Upsilon}$ & $\gamma\br{\Upsilon}$ \\ \midrule
            0 & 0 & 0 & 0 & 0 & 3 & 5 & $6$  \\
            \bottomrule
            \end{tabular}
    \end{minipage}
\end{figure}

\begin{figure}[H]
    \begin{minipage}{0.3\linewidth}
        \centering
        \tree{[$k_1$ [$k_4$ [$k_5$]] [$k_2$ [$k_3$]]]}       \figcaption{Graph of $[[[\;]_{k_3}]_{k_2} [[\;]_{k_5}]_{k_4}]_{k_1}$}
    \end{minipage}\quad\quad
    \begin{minipage}{0.64\linewidth}
        \centering
        \tabcaption{Functions on $[[[\;]_{k_3}]_{k_2} [[\;]_{k_5}]_{k_4}]_{k_1} \in \T_5$}
        \begin{tabular}{@{}ccccc|ccc@{}}
            \toprule
            $k_1$ & $k_2$ & $k_3$ & $k_4$ & $k_5$ & $L\br{\Upsilon}$ & $\abs{\Upsilon}$ & $\gamma\br{\Upsilon}$ \\ \midrule
            0 & 0 & 0 & 0 & 0 & 3 & 5 & $2$  \\
            \bottomrule
            \end{tabular}
    \end{minipage}
\end{figure}

\begin{figure}[H]
    \begin{minipage}{0.4\linewidth}
        \centering
        \tree{[$k_1$ [$k_2$ [$k_3$ [$k_4$]]]]}
        \figcaption{Graph of $[[[[\;]_{k_4}]_{k_3}]_{k_2}]_{k_1}$}
    \end{minipage}
    \begin{minipage}{0.59\linewidth}
        \centering
        \tabcaption{Functions on $[[[[\;]_{k_4}]_{k_3}]_{k_2}]_{k_1} \in \T_5$}
        \begin{tabular}{@{}cccc|ccc@{}}
            \toprule
            $k_1$ & $k_2$ & $k_3$ & $k_4$ & $L\br{\Upsilon}$ & $\abs{\Upsilon}$ & $\gamma\br{\Upsilon}$ \\ \midrule
            0 & 0 & 0 & 0 & 4 & 4 & $1$  \\
            0 & 0 & 0 & 1 & 4 & 5 & $1$  \\
            0 & 0 & 1 & 0 & 4 & 5 & $1$  \\
            0 & 1 & 0 & 0 & 4 & 5 & $1$  \\
            1 & 0 & 0 & 0 & 4 & 5 & $1$  \\
            \bottomrule
        \end{tabular}
    \end{minipage}
\end{figure}

\begin{figure}[H]
    \begin{minipage}{0.3\linewidth}
        \centering
        \tree{[$k_1$ [$k_5$] [$k_2$ [$k_3$ [$k_4$ ]]]]}
        \figcaption{Graph of $[[[[\;]_{k_4}]_{k_3}]_{k_2} [\;]_{k_5}]_{k_1}$}
    \end{minipage}\quad\quad
    \begin{minipage}{0.64\linewidth}
        \centering
        \tabcaption{Functions on $[[[[\;]_{k_4}]_{k_3}]_{k_2} [\;]_{k_5}]_{k_1} \in \T_5$}
        \begin{tabular}{@{}ccccc|ccc@{}}
            \toprule
            $k_1$ & $k_2$ & $k_3$ & $k_4$ & $k_5$ & $L\br{\Upsilon}$ & $\abs{\Upsilon}$ & $\gamma\br{\Upsilon}$ \\ \midrule
            0 & 0 & 0 & 0 & 0 & 4 & 5 & $2$  \\
            \bottomrule
        \end{tabular}
    \end{minipage}
\end{figure}

\begin{figure}[H]
    \begin{minipage}{0.3\linewidth}
        \centering
        \tree{[$k_1$ [$k_2$ [$k_5$] [$k_3$ [$k_4$ ]]]]}
        \figcaption{Graph of $[[[[\;]_{k_4}]_{k_3} [\;]_{k_5}]_{k_2}]_{k_1}$}
    \end{minipage}\quad\quad
    \begin{minipage}{0.64\linewidth}
        \centering
        \tabcaption{Functions on $[[[[\;]_{k_4}]_{k_3} [\;]_{k_5}]_{k_2}]_{k_1} \in \T_5$}
        \begin{tabular}{@{}ccccc|ccc@{}}
            \toprule
            $k_1$ & $k_2$ & $k_3$ & $k_4$ & $k_5$ & $L\br{\Upsilon}$ & $\abs{\Upsilon}$ & $\gamma\br{\Upsilon}$ \\ \midrule
            0 & 0 & 0 & 0 & 0 & 4 & 5 & $2$  \\
            \bottomrule
        \end{tabular}
    \end{minipage}
\end{figure}

\begin{figure}[H]
    \begin{minipage}{0.3\linewidth}
        \centering
        \tree{[$k_1$ [$k_2$ [$k_3$ [$k_5$] [$k_4$ ]]]]}
        \figcaption{Graph of $[[[[\;]_{k_4} [\;]_{k_5}]_{k_3}]_{k_2}]_{k_1}$}
    \end{minipage}\quad\quad
    \begin{minipage}{0.64\linewidth}
        \centering
        \tabcaption{Functions on $[[[[\;]_{k_4} [\;]_{k_5}]_{k_3}]_{k_2}]_{k_1} \in \T_5$}
        \begin{tabular}{@{}ccccc|ccc@{}}
            \toprule
            $k_1$ & $k_2$ & $k_3$ & $k_4$ & $k_5$ & $L\br{\Upsilon}$ & $\abs{\Upsilon}$ & $\gamma\br{\Upsilon}$ \\ \midrule
            0 & 0 & 0 & 0 & 0 & 4 & 5 & $2$  \\
            \bottomrule
        \end{tabular}
    \end{minipage}
\end{figure}

\begin{figure}[H]
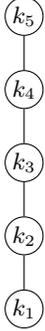

    \begin{minipage}{0.3\linewidth}
        \centering
        \tree{[$k_1$ [$k_2$ [$k_3$ [$k_4$ [$k_5$]]]]]}
        \figcaption{Graph of $[[[[[\;]_{k_5}]_{k_4}]_{k_3}]_{k_2}]_{k_1}$}
    \end{minipage}\quad\quad
    \begin{minipage}{0.64\linewidth}
        \centering
        \tabcaption{Functions on $[[[[[\;]_{k_5}]_{k_4}]_{k_3}]_{k_2}]_{k_1} \in \T_5$}
        \begin{tabular}{@{}ccccc|ccc@{}}
            \toprule
            $k_1$ & $k_2$ & $k_3$ & $k_4$ & $k_5$ & $L\br{\Upsilon}$ & $\abs{\Upsilon}$ & $\gamma\br{\Upsilon}$ \\ \midrule
            0 & 0 & 0 & 0 & 0 & 5 & 5 & $1$  \\
            \bottomrule
        \end{tabular}
    \end{minipage}
\end{figure}

\section{Runge-Kutta schemes found by numerical optimization}\label{sec_RKnumopt}

Below we list the coefficients of Scheme~\ref{sch_rk}, obtained by numerically minimizing $\sum_{i, j=1}^m a_{ij}^2 + \sum_{i=1}^m b_i^2$ over all $a_{ij}$ and $b_i$, subject to the $r$th order conditions in Table~\ref{tab_ordcond}, with $c_i = 1 - i/m$ for $i = 0, 1, \cdots, m$.
The stage number $m$ is chosen as $m = 5, 6$ for $r = 4$, and $m = 7, 8$ for $r = 5$.
All computations are performed in double precision.
Coefficients not listed are zero.

\begin{itemize}
    \item the $5$-stage $4$th-order scheme:
        \begin{equation*}
            \text{RK}(4; 5): \quad \begin{aligned}
                &a_{45} = 0.2, \quad a_{35} = -0.13242233706502626\\
                &a_{34} = 0.5324223370650263, \quad a_{25} = 0.019886552336434136, \\
                &a_{24} = 0.24294264564990678, \quad a_{23} = 0.3371708020136592, \\
                &a_{15} = 0.1963917348918804, \quad a_{14} = -0.005472324607713924, \\
                &a_{13} = 0.2464094480422048, \quad a_{12} = 0.36267114167362885, \\
                &b_5 = 0.04623081469521772, \quad b_4 = 0.27341007455246313, \\
                &b_3 = 0.31905155483797115, \quad b_2 = -0.1432565921142024, \\
                &b_1 = 0.5045641480285504, \quad c_i = 1 - 0.2 i \sptext{for} i = 5, 4, \cdots, 0.
            \end{aligned}
        \end{equation*}
    \item the $6$-stage $4$th-order scheme:
        \begin{equation*}
            \text{RK}(4; 6): \quad \begin{aligned}
                &a_{56} = 1/6, \quad a_{46} = -0.025061513556965075, \\
                &a_{45} = 0.3583948468902984, \quad a_{36} = 0.13841067828110443, \\
                &a_{35} = 0.1693249424852804, \quad a_{34} = 0.1922643792336152\\
                &a_{26} = -0.0038511700476725353, \quad a_{25} = 0.09581249968889799, \\
                &a_{24} = 0.22659933188130044, \quad a_{23} = 0.34810600514414075, \\
                &a_{16} = 0.16161918775745476, \quad a_{15} = 0.10244745111384074, \\
                &a_{14} = 0.1276198408649451, \quad a_{13} = 0.12764301041543388, \\
                &a_{12} = 0.31400384318165875, \quad b_6 = 0.03548077842498064, \\
                &b_5 = 0.2095257367170438, \quad b_4 = 0.3070892688820228, \\
                &b_3 = -0.033230011198136004, \quad b_2 = 0.12968537675712494, \\
                &b_1 = 0.3514488504169638, \quad c_i = 1 - i/6 \sptext{for} i = 6, 5, \cdots, 0.
            \end{aligned}
        \end{equation*}
    \item the $7$-stage $5$th-order scheme:
        \begin{equation*}
            \text{RK}(5; 7): \quad \begin{aligned}
                &a_{67} = 1/7, \quad a_{57} = -0.00849922984652299, \\
                &a_{56} = 0.2942135155608087, \quad a_{47} = 0.012111601574373559, \\
                &a_{46} = 0.027194944061276653, \quad a_{45} = 0.3892648829357783,\\
                &a_{37} = 0.17649344547970258, \quad a_{36} = 0.038519474866316364, \\
                &a_{35} = 0.013295135548829034, \quad a_{34} = 0.3431205155337234, \\
                &a_{27} = 0.20775831541493936, \quad a_{26} = -0.0013035026399939747, \\
                &a_{25} = 0.05244696764836481, \quad a_{24} = 0.24416745620882727, \\
                &a_{23} = 0.21121647765357676, \quad a_{17} = 0.04201635027665935, \\
                &a_{16} = 0.16939620723268453, \quad a_{15} = 0.17374202049423548, \\
                &a_{14} = 0.1849766339309189, \quad a_{13} = -0.053226048370035584, \\
                &a_{12} = 0.3402376935783944, \quad b_7 = 0.09233840526733901, \\
                &b_6 = 0.018048818525724868, \quad b_5 = 0.3316242728057178, \\
                &b_4 = 0.021246618173088202, \quad b_3 = 0.2976169110457616, \\
                &b_2 = -0.12974529206631724, \quad b_1 = 0.3688702662486858\\
                &c_i = 1 - i/7 \sptext{for} i = 7, 6, \cdots, 0.
            \end{aligned}
        \end{equation*}
    \item the $8$-stage $5$th-order scheme:
        \begin{equation*}
            \text{RK}(5; 8): \quad \begin{aligned}
                &a_{78} = 1/8, \quad a_{68} = -0.019695026811446536, \\
                &a_{67} = 0.2696950268114465, \quad a_{58} = 0.12027883922164721, \\
                &a_{57} = -0.025389113361355534, \quad a_{56} = 0.28011027413970835, \\
                &a_{48} = 0.17348303333838974, \quad a_{47} = 0.15121241994785925, \\
                &a_{46} = 0.11261192763186804, \quad a_{45} = 0.06269261908188303, \\
                &a_{38} = 0.027377326034266746, \quad a_{37} = 0.048508067745817715, \\
                &a_{36} = 0.14412177807635515, \quad a_{35} = 0.2178666996537625, \\
                &a_{34} = 0.18712612848979787, \quad a_{28} = 0.12076055643080501, \\
                &a_{27} = 0.07237536565936713, \quad a_{26} = 0.1519401323946871, \\
                &a_{25} = 0.16024192906367374, \quad a_{24} = -0.07416278224686454, \\
                &a_{23} = 0.3188447986983316, \quad a_{18} = 0.03658677645498778, \\
                &a_{17} = 0.08834233979368933, \quad a_{16} = 0.20873825484619646, \\
                &a_{15} = 0.22487804433654632, \quad a_{14} = -0.017586277300601805, \\
                &a_{13} = 0.03916189598076387, \quad a_{12} = 0.29487896588841805, \\
                &b_{8} = 0.10264676180489864, \quad b_7 = -0.005963693910170281, \\
                &b_6 = 0.14240515226981773, \quad b_5 = 0.3625707619097686, \\
                &b_4 = -0.01436402197198115, \quad b_3 = 0.044156049175500874, \\
                &b_2 = 0.10972598940350983, \quad b_1 = 0.25882300131865577, \\
                &c_i = 1 - i/8 \sptext{for} i = 8, 7, \cdots, 0.
            \end{aligned}
        \end{equation*}
\end{itemize}

\clearpage
\section[Explicit expression of C(r)]{Explicit expression of $C(r)$ in \eqref{eq_defCr}}\label{sec_expcod}

The order conditions $C(r)$ in \eqref{eq_defCr} can be stated explicitly by substituting the elementary weights $A_i^m(\cdot)$ from \Cref{def_weight} and the enumeration of ULN-trees from \cref{eq1_enum_T,eq2_enum_T,eq3_enum_T} into \eqref{eq_defCr}.
Specifically, for $r = 1$, we have 
\begin{equation}\label{eq_exp_C1}
    C(1):  
    \left\{
    \begin{aligned}
    &A_i^m\br{[\;]_0} = \sum_{j=1}^m a_{ij} - c_i = 0, \quad i = 1, 2,\cdots, m, \\
    &A_0^m\br{[\;]_0} = \sum_{j=1}^m b_j - 1 = 0, 
    \end{aligned}
    \right.
\end{equation}
where we note that $C(1)$ does not involve the condition $A_0^m\br{\Upsilon} = 0$, $\Upsilon \in \T_{1-}$ because $\T_{1-} = \emptyset$.
To derive $C(2)$, recalling \cref{eq1_enum_T}, we have $\T_{2-} = \T_{1-} \bigcup \T_{2-}$; hence $C(2)$ can be stated as $C(1)$ together with the additional conditions $A_0^m\br{\Upsilon} = 0$ for $\Upsilon \in \T_{2-} \backslash \T_{1-}$, i.e.,
\begin{equation}\label{eq_exp_C2}
    C(2):
    \left\{
    \begin{aligned}
        &C(1) \text{ holds}, \\
        &A_0^m\br{[\;]_1} = \sum_{j=1}^m b_j\br{1 - c_j} - \frac{1}{2} = 0. 
    \end{aligned}
    \right. 
\end{equation}
Similarly, we obtain $C(3)$, $C(4)$, and $C(5)$ as follows:
\begin{equation}\label{eq_exp_C3}
    C(3):
    \left\{
    \begin{aligned}
        &C(2) \text{ holds}, \\
        &A_0^m\br{[\;]_2} = \sum_{j=1}^m b_j\br{1 - c_j}^2 - \frac{1}{3} = 0, \\
        &A_0^m\br{[[\;]_{1}]_{0}} = \sum_{i = 1}^{m} b_i \vbr{\Big}{\sum_{j=1}^m a_{ij} \br{c_i - c_j} - \frac{c_i^2}{2}} = 0; 
    \end{aligned}
    \right. 
\end{equation}
\begin{equation}\label{eq_exp_C4}
    C(4):
    \left\{
    \begin{aligned}
        &C(3) \text{ holds}, \\
        &A_0^m\br{[\;]_3} = \sum_{j=1}^m b_j\br{1 - c_j}^3 - \frac{1}{4} = 0, \\
        &A_0^m\br{[[\;]_2]_0} = \sum_{j = 1}^{m} b_j \vbr{\Big}{\sum_{k=1}^m a_{jk} \br{c_j - c_k}^2 - \frac{c_j^{3}}{3}} = 0, \\
        &A_0^m\br{[[\;]_1]_1} = \sum_{j = 1}^{m} b_j \vbr{\Big}{\sum_{k=1}^m \br{1 - c_j} a_{jk} \br{c_j - c_k} - \br{1 - c_j} \frac{c_j^{2}}{2}} = 0, \\
        &A_0^m\br{[[[\;]_1]_0]_0} = \sum_{i = 1}^{m} b_i \sum_{j = 1}^{m} a_{ij} \vbr{\Big}{\sum_{k=1}^m a_{jk} \br{c_j - c_k} - \frac{c_j^{2}}{2}} = 0;
    \end{aligned}
    \right.
\end{equation}
\begin{equation}\label{eq_exp_C5}
    C(5):
    \left\{
    \begin{aligned}
        &C(4) \text{ holds}, \\
        &A_0^m\br{[\;]_4} = \sum_{j=1}^m b_j\br{1 - c_j}^4 - \frac{1}{5} = 0, \\
        &A_0^m\br{[[\;]_3]_0} = \sum_{j = 1}^{m} b_j \vbr{\Big}{\sum_{k=1}^m a_{jk} \br{c_j - c_k}^3 - \frac{c_j^{4}}{4}} = 0, \\
        &A_0^m\br{[[\;]_2]_1} = \sum_{i = 1}^{m} b_i \br{1 - c_i} \vbr{\Big}{\sum_{j=1}^{m} a_{ij} \br{c_i - c_j}^{2} - \frac{c_i^{3}}{3}} = 0, \\
        &A_0^m\br{[[\;]_1]_2} = \sum_{i = 1}^{m} b_i \br{1 - c_i}^2 \vbr{\Big}{\sum_{j=1}^{m} a_{ij} \br{c_i - c_j} - \frac{c_i^{2}}{2}} = 0, \\
        &A_0^m\br{[[[\;]_2]_0]_0} = \sum_{i=1}^m b_i \sum_{j=1}^{m} a_{ij} \vbr{\Big}{\sum_{k=1}^{m} a_{jk} \br{c_j - c_k}^2 - \frac{c_j^{3}}{3}} = 0, \\
        &A_0^m\br{[[\;]_1 [\;]_1]_0} = \sum_{i=1}^m b_i \vbr{\Big}{\sum_{j=1}^{m} a_{ij} \br{c_i - c_j} - \frac{c_i^{2}}{2}}^2 = 0,\\
        &A_0^m\br{[[[\;]_1]_1]_0} = \sum_{i=1}^m b_i \sum_{j = 1}^{m} a_{ij} \br{c_i - c_j} \vbr{\Big}{\sum_{k=1}^{m} a_{jk} \br{c_j - c_k} - \frac{c_j^{2}}{2}} = 0, \\
        &A_0^m\br{[[[\;]_1]_0]_1} = \sum_{i=1}^m b_i (1 - c_i) \sum_{j = 1}^{m} a_{ij} \vbr{\Big}{\sum_{k=1}^{m} a_{jk} \br{c_j - c_k} - \frac{c_j^{2}}{2}} = 0, \\
        &A_0^m\br{[[[[\;]_1]_0]_0]_0} = \sum_{i=1}^m b_i \sum_{j=1}^{m} a_{ij} \sum_{k=1}^{m} a_{jk} \vbr{\Big}{\sum_{l=1}^{m} a_{kl} \br{c_k - c_l} - \frac{c_k^{2}}{2}} = 0.
    \end{aligned}
    \right.
\end{equation}

\section[Proof of the order-condition proposition]{Proof of \Cref{prop_Cr_table} given in \cref{sec_ord_condi}}\label{sec_proof}

\begin{proof}[Proof of \Cref{prop_Cr_table}]
    
    It is obvious that the last two conditions in \eqref{eq_ajieq0_maxabc} of \Cref{assu_coeff} imply, respectively, the conditions (2) and (1) in \Cref{tab_ordcond}.
    The remaining proof is divided into five parts.
    
    \medskip
    \noindent
    \textbf{(i) Proof of} $C(1) \iff$ (3) -- (4). 
    Comparing \eqref{eq_exp_C1} with the conditions (3) and (4) in Table~\ref{tab_ordcond} immediately yields the result.  

    \medskip
    \noindent
    \textbf{(ii) Proof of} $C(2) \iff$ (3) -- (5). 
    By \eqref{eq_exp_C2}, $C(2)$ includes $C(1)$. 
    By (i), $C(1)$ is equivalent to the conditions (3)--(4) in Table~\ref{tab_ordcond}.
    Thus, it suffices to show that, under (3) -- (4), the second condition in \eqref{eq_exp_C2} is equivalent to the condition (5) in Table~\ref{tab_ordcond}.
    Actually, this equivalence can be obtained by
    \begin{align*}
            &\text{the second condition in \eqref{eq_exp_C2}} \\
        \iff&\sum_{j=1}^m b_j - \sum_{j=1}^m b_j c_j - \frac{1}{2} = 0, \\
        \iff&1 - \sum_{j=1}^m b_j c_j - \frac{1}{2} = 0,\\
        \iff&\text{the condition (5) in Table~\ref{tab_ordcond}},
    \end{align*}
    where the second equivalence follows from the condition (4) in Table~\ref{tab_ordcond}.
    
    \medskip
    \noindent
    \textbf{(iii) Proof of} $C(3) \iff$ (3) -- (7). 
    By \eqref{eq_exp_C3}, $C(3)$ includes $C(2)$.
    By (ii), $C(2)$ is equivalent to the conditions (3) -- (5) in Table~\ref{tab_ordcond}.
    Thus it suffices to show that, under (3) -- (5), the second and third conditions in \eqref{eq_exp_C3} are equivalent to the conditions (6) and (7) in Table~\ref{tab_ordcond}, respectively. 
    
    Under (3) -- (5) in Table~\ref{tab_ordcond}, we have
    \begin{align*}
            &\text{the second condition in \eqref{eq_exp_C3}}\\
        \iff&\sum_{j=1}^m b_j - 2 \sum_{j=1}^m b_j c_j + \sum_{j=1}^m b_j c_j^2 - \frac{1}{3} = 0\\
        \iff&1 - 2 \times \frac{1}{2} + \sum_{j=1}^m b_j c_j^2 - \frac{1}{3} = 0 \\
        % \iff&\sum_{j=1}^m b_j c_j^2 = \frac{1}{3}, \\
        \iff&\text{the condition (6) in Table~\ref{tab_ordcond}},
    \end{align*}
    where the second equivalence is obtained by inserting the conditions (4) and (5) in Table~\ref{tab_ordcond};
    \begin{align*}
            &\text{the third condition in \eqref{eq_exp_C3}}\\
        \iff&\sum_{i = 1}^{m} b_i c_i \sum_{j=1}^m a_{ij}  - \sum_{i = 1}^{m} \sum_{j=1}^m b_i a_{ij} c_j - \frac{1}{2} \sum_{i = 1}^{m} b_i c_i^2 = 0 \\
        \iff&\sum_{i = 1}^{m} b_i c_i^2  - \sum_{i = 1}^{m} \sum_{j=1}^m b_i a_{ij} c_j - \frac{1}{2} \sum_{i = 1}^{m} b_i c_i^2 = 0 \\
        \iff&\frac{1}{3}  - \sum_{i = 1}^{m} \sum_{j=1}^m b_i a_{ij} c_j - \frac{1}{2}\times \frac{1}{3} = 0\\
        % \iff&\sum_{i = 1}^{m} \sum_{j=1}^m b_i a_{ij} c_j = \frac{1}{6} \\
        \iff&\text{the condition (7) in Table~\ref{tab_ordcond}},
    \end{align*}
    where the second and third equivalences are derived by substituting the conditions (3) and (6) from Table~\ref{tab_ordcond}, respectively.

    \medskip
    \noindent
    \textbf{(iv) Proof of } $C(4) \iff$ (3) -- (11).
    By \eqref{eq_exp_C4}, $C(4)$ includes $C(3)$.
    By (iii), $C(3)$ is equivalent to the conditions (3) -- (7) in Table~\ref{tab_ordcond}. 
    Hence, it suffices to show that, under (3) -- (7), the second through fifth conditions in \eqref{eq_exp_C4} are, respectively, equivalent to the conditions (8) -- (11) in Table~\ref{tab_ordcond}.
    
    Under (3) -- (7) in Table~\ref{tab_ordcond}, we have
    \begin{equation}\label{eq1_proof_C4}
    \begin{aligned}
            &\text{the second condition in \eqref{eq_exp_C4}}\\
        \iff&\sum_{j=1}^m b_j - 3 \sum_{j=1}^m b_j c_j + 3 \sum_{j=1}^m b_j c_j^2 - \sum_{j=1}^m b_j c_j^3 - \frac{1}{4} = 0\\
        \iff&1 - 3 \times \frac{1}{2} + 3 \times \frac{1}{3} - \sum_{j=1}^m b_j c_j^3 - \frac{1}{4} = 0\\
        \iff&\text{the condition (8) in Table~\ref{tab_ordcond}},
    \end{aligned}
    \end{equation}
    where the second equivalence is obtained by inserting the conditions (4), (5) and (6) from Table~\ref{tab_ordcond}. 
    The equivalence given in \eqref{eq1_proof_C4} allows us to use the condition (8) in Table~\ref{tab_ordcond} to simplify the remaining conditions in \eqref{eq_exp_C4}. 
    
    We temporarily skip the third condition in \eqref{eq_exp_C4} and prove the equivalences between the fourth condition and (9) first.
    Specifically, under (3) -- (8) in Table~\ref{tab_ordcond}, we have
    \begin{equation}\label{eq2_proof_C4}
    \begin{aligned}
            &\text{the fourth condition in \eqref{eq_exp_C4}},\\
        \iff&\sum_{j = 1}^{m} b_j (1 - c_j) \vbr{\Big}{c_j \sum_{k=1}^m a_{jk} - \sum_{k=1}^m a_{jk} c_k - \frac{1}{2} c_j^2} = 0 \\
        \iff&\sum_{j = 1}^{m} b_j (1 - c_j) \vbr{\Big}{\frac{1}{2} c_j^2 - \sum_{k=1}^m a_{jk} c_k} = 0 \\
        \iff& \frac{1}{2} \sum_{j = 1}^{m} b_j c_j^{2} - \frac{1}{2} \sum_{j = 1}^{m} b_j c_j^{3} - \sum_{j = 1}^{m} \sum_{k=1}^m b_j a_{jk} c_k + \sum_{j = 1}^{m} \sum_{k=1}^m b_j c_j a_{jk} c_k = 0\\
        \iff& \frac{1}{2} \times \frac{1}{3} - \frac{1}{2} \times \frac{1}{4} - \frac{1}{6} + \sum_{j = 1}^{m} \sum_{k=1}^m b_j c_j a_{jk} c_k = 0\\
        \iff&\text{the condition (9) in Table~\ref{tab_ordcond}},
    \end{aligned}
    \end{equation}
    where the second equivalence follows from the condition (3); the fourth equivalence is derived by substituting the conditions (6), (7) and (8) from Table~\ref{tab_ordcond}.
    The equivalence given in \eqref{eq2_proof_C4} allows us to use the condition (9) in Table~\ref{tab_ordcond} to simplify the remaining conditions in \eqref{eq_exp_C4}.
    
    Now we turn to handle the third condition in \eqref{eq_exp_C4}. 
    Under (3) -- (9) in Table~\ref{tab_ordcond}, 
    \begin{align*}
            &\text{the third condition in \eqref{eq_exp_C4}}\\
        \iff&\sum_{j = 1}^{m} b_j c_j^2 \sum_{k=1}^m a_{jk} - 2 \sum_{j = 1}^{m} \sum_{k=1}^m b_j c_j a_{jk} c_k + \sum_{j = 1}^{m} \sum_{k=1}^m b_j a_{jk} c_k^2 - \frac{1}{3}\sum_{j = 1}^{m} b_j c_j^3 = 0 \\
        \iff&\sum_{j = 1}^{m} b_j c_j^3 - 2 \sum_{j = 1}^{m} \sum_{k=1}^m b_j c_j a_{jk} c_k + \sum_{j = 1}^{m} \sum_{k=1}^m b_j a_{jk} c_k^2 - \frac{1}{3}\sum_{j = 1}^{m} b_j c_j^3 = 0\\
        \iff&\frac{1}{4} - 2 \times \frac{1}{8} + \sum_{j = 1}^{m} \sum_{k=1}^m b_j a_{jk} c_k^2 - \frac{1}{3} \times \frac{1}{4} = 0\\
        \iff&\text{the condition (10) in Table~\ref{tab_ordcond}},
    \end{align*}
    where the second equivalence is obtained by inserting the condition (3); the third equivalence follows from the conditions (8) and (9) in Table~\ref{tab_ordcond}.
    Similarly, under (3) -- (10) in Table~\ref{tab_ordcond}, we have
    \begin{align*}
            &\text{the fifth condition in \eqref{eq_exp_C4}}\\
        \iff&\sum_{i = 1}^{m} b_i \sum_{j = 1}^{m} a_{ij} \vbr{\Big}{\sum_{k=1}^m a_{jk} \br{c_j - c_k} - \frac{c_j^{2}}{2}} = 0 \\
        \iff&\sum_{i = 1}^{m} b_i \sum_{j = 1}^{m} a_{ij} \vbr{\Big}{\frac{c_j^{2}}{2} - \sum_{k=1}^m a_{jk} c_k} = 0 \\
        \iff& \frac{1}{2} \times \frac{1}{12} - \sum_{i = 1}^{m} \sum_{j = 1}^{m} \sum_{k=1}^m b_i a_{ij} a_{jk} c_k = 0 \\
        \iff&\text{the condition (11) in Table~\ref{tab_ordcond}},
    \end{align*}
    where the second equivalence follows from the condition (3); the third equivalence is derived by substituting the condition (10) from Table~\ref{tab_ordcond}.
    
    \medskip
    \noindent
    \textbf{(v) Proof of } $C(5) \iff$ (3) -- (20).
    Proceeding as in (i) -- (iv), it suffices to verify:
    \begin{enumerate}
        \item under (3) -- (11), the condition (12) $\iff$ the second condition in \eqref{eq_exp_C5};
        \item under (3) -- (12), the condition (13) $\iff$ the fifth condition in \eqref{eq_exp_C5};
        \item under (3) -- (13), the condition (14) $\iff$ the fourth condition in \eqref{eq_exp_C5};
        \item under (3) -- (14), the condition (15) $\iff$ the ninth condition in \eqref{eq_exp_C5};
        \item under (3) -- (15), the condition (16) $\iff$ the seventh condition in \eqref{eq_exp_C5};
        \item under (3) -- (16), the condition (17) $\iff$ the third condition in \eqref{eq_exp_C5};
        \item under (3) -- (17), the condition (18) $\iff$ the eighth condition in \eqref{eq_exp_C5};
        \item under (3) -- (18), the condition (19) $\iff$ the sixth condition in \eqref{eq_exp_C5};
        \item under (3) -- (19), the condition (20) $\iff$ the tenth condition in \eqref{eq_exp_C5}. 
    \end{enumerate}
    The details are as follows.
    \begin{align*}
            &\text{the second condition in \eqref{eq_exp_C5}}\\
        \iff&\sum_{j=1}^m b_j - 4 \sum_{j=1}^m b_j c_j + 6 \sum_{j=1}^m b_j c_j^2 - 4 \sum_{j=1}^m b_j c_j^3 + \sum_{j=1}^m b_j c_j^4 - \frac{1}{5} = 0\\
        \iff&1 - 4 \times \frac{1}{2} + 6 \times \frac{1}{3} - 4 \times \frac{1}{4} + \sum_{j=1}^m b_j c_j^4 - \frac{1}{5} = 0\\
        \iff&\text{the condition (12) in Table~\ref{tab_ordcond}},
    \end{align*}
    where the second equivalence is obtained by inserting the conditions (4), (6) and (8) from Table~\ref{tab_ordcond};
    \begin{align*}
        &\text{the fifth condition in \eqref{eq_exp_C5}}\\
        \iff& \sum_{i = 1}^{m} \sum_{j=1}^{m} b_i c_i a_{ij}  - \sum_{i = 1}^{m} \sum_{j=1}^{m} b_i a_{ij} c_j - 2 \sum_{i = 1}^{m} \sum_{j=1}^{m}  b_i c_i c_i a_{ij} + 2 \sum_{i = 1}^{m} \sum_{j=1}^{m} b_i c_i a_{ij} c_j \\
        & - \frac{1}{2} \sum_{i = 1}^{m} b_i c_i^{2} + \sum_{i = 1}^{m} b_i c_i c_i^{2} + \sum_{i = 1}^{m} \sum_{j=1}^{m} b_i c_i^2 c_i a_{ij} - \sum_{i = 1}^{m} \sum_{j=1}^{m} b_i c_i^2 a_{ij} c_j = \frac{1}{2} \sum_{i = 1}^{m} b_i c_i^2 c_i^{2}  \\
        \iff& \sum_{i = 1}^{m} b_i c_i^2  - \sum_{i = 1}^{m} \sum_{j=1}^{m} b_i a_{ij} c_j - 2 \sum_{i = 1}^{m} b_i c_i^3 + 2 \sum_{i = 1}^{m} \sum_{j=1}^{m} b_i c_i a_{ij} c_j \\
        & - \frac{1}{2} \sum_{i = 1}^{m} b_i c_i^{2} + \sum_{i = 1}^{m} b_i c_i^{3} + \sum_{i = 1}^{m} b_i c_i^4 - \sum_{i = 1}^{m} \sum_{j=1}^{m} b_i c_i^2 a_{ij} c_j - \frac{1}{2} \sum_{i = 1}^{m} b_i c_i^4 = 0\\
        \iff& \frac{1}{3} - \frac{1}{6} - 2 \times \frac{1}{4} + 2 \times \frac{1}{8} - \frac{1}{2} \times \frac{1}{3} + \frac{1}{4} + \frac{1}{5} - \sum_{i = 1}^{m} \sum_{j=1}^{m} b_i c_i^2 a_{ij} c_j - \frac{1}{2} \times \frac{1}{5} = 0 \\
        \iff& \text{the condition (13) in Table~\ref{tab_ordcond}},
    \end{align*}
    where the second equivalence is obtained by inserting the condition (3); the third equivalence follows from the conditions (6), (7), (8), (9) and (12) in Table~\ref{tab_ordcond};
    \begin{align*}
        &\text{the fourth condition in \eqref{eq_exp_C5}}\\
        \iff&\sum_{i = 1}^{m} b_i (1 - c_i) \vbr{\Big}{\sum_{j=1}^{m} a_{ij} \br{c_i - c_j}^{2} - \frac{c_i^{3}}{3}} = 0 \\
        \iff&  \sum_{i = 1}^{m}  \sum_{j=1}^{m} b_i c_i^2 a_{ij} - 2 \sum_{i = 1}^{m} \sum_{j=1}^{m} b_i c_i a_{ij} c_j + \sum_{i = 1}^{m} \sum_{j=1}^{m} b_i a_{ij} c_j^2 - \frac{1}{3} \sum_{i = 1}^{m} b_i c_i^{3} \\
        & - \sum_{i = 1}^{m} \sum_{j=1}^{m} b_i c_i^3 a_{ij} + 2 \sum_{i = 1}^{m} \sum_{j=1}^{m} b_i c_i^2 a_{ij} c_j - \sum_{i = 1}^{m} \sum_{j=1}^{m} b_i c_i a_{ij} c_j^2 + \frac{1}{3} \sum_{i = 1}^{m} b_i c_i^{4} = 0\\
        \iff&  \sum_{i = 1}^{m} b_i c_i^3 - 2 \sum_{i = 1}^{m} \sum_{j=1}^{m} b_i c_i a_{ij} c_j + \sum_{i = 1}^{m} \sum_{j=1}^{m} b_i a_{ij} c_j^2 - \frac{1}{3} \sum_{i = 1}^{m} b_i c_i^{3} \\
        & - \sum_{i = 1}^{m} \sum_{j=1}^{m} b_i c_i^4 + 2 \sum_{i = 1}^{m} \sum_{j=1}^{m} b_i c_i^2 a_{ij} c_j - \sum_{i = 1}^{m} \sum_{j=1}^{m} b_i c_i a_{ij} c_j^2 + \frac{1}{3} \sum_{i = 1}^{m} b_i c_i^{4}  = 0\\
        \iff&  \frac{1}{4} - 2 \times \frac{1}{8} + \frac{1}{12} - \frac{1}{3} \times \frac{1}{4} - \frac{1}{5} + 2 \times \frac{1}{10} - \sum_{i = 1}^{m} \sum_{j=1}^{m} b_i c_i a_{ij} c_j^2 + \frac{1}{3} \times \frac{1}{5}  = 0\\
        \iff& \text{the condition (14) in Table~\ref{tab_ordcond}},
    \end{align*}
    where the fourth equivalence is obtained by inserting the conditions (8), (9), (10), (12) and (13) from Table~\ref{tab_ordcond};
    \begin{align*}
        &\text{the ninth condition in \eqref{eq_exp_C5}}\\
        \iff& \sum_{i=1}^m \sum_{j = 1}^{m} \sum_{k=1}^{m} b_i a_{ij} c_j a_{jk} - \sum_{i=1}^m \sum_{j = 1}^{m} \sum_{k=1}^{m} b_i a_{ij} a_{jk} c_k - \frac{1}{2} \sum_{i=1}^m \sum_{j = 1}^{m} b_i a_{ij} c_j^{2} \\
        & - \sum_{i=1}^m \sum_{j = 1}^{m} \sum_{k=1}^{m} b_i c_i a_{ij} c_j a_{jk} + \sum_{i=1}^m \sum_{j = 1}^{m} \sum_{k=1}^{m} b_i c_i a_{ij} a_{jk} c_k + \frac{1}{2} \sum_{i=1}^m \sum_{j = 1}^{m} b_i c_i a_{ij} c_j^{2} = 0\\
        \iff& \sum_{i=1}^m \sum_{j = 1}^{m} b_i a_{ij} c_j^2 - \sum_{i=1}^m \sum_{j = 1}^{m} \sum_{k=1}^{m} b_i a_{ij} a_{jk} c_k - \frac{1}{2} \sum_{i=1}^m \sum_{j = 1}^{m} b_i a_{ij} c_j^{2} \\
        & - \sum_{i=1}^m \sum_{j = 1}^{m} b_i c_i a_{ij} c_j^2 + \sum_{i=1}^m \sum_{j = 1}^{m} \sum_{k=1}^{m} b_i c_i a_{ij} a_{jk} c_k + \frac{1}{2} \sum_{i=1}^m \sum_{j = 1}^{m} b_i c_i a_{ij} c_j^{2} = 0\\
        \iff& \frac{1}{12} - \frac{1}{24} - \frac{1}{2} \times \frac{1}{12} - \frac{1}{2} \times \frac{1}{15} + \sum_{i=1}^m \sum_{j = 1}^{m} \sum_{k=1}^{m} b_i c_i a_{ij} a_{jk} c_k = 0\\
        \iff& \text{the condition (15) in Table~\ref{tab_ordcond}},
    \end{align*}
    where the third equivalence follows from conditions (10), (11) and (14) in Table~\ref{tab_ordcond};
    \begin{align*}
        &\text{the seventh condition in \eqref{eq_exp_C5}}\\
        \iff& \sum_{i=1}^m b_i \vbr{\Big}{\sum_{j=1}^{m} a_{ij} \br{c_i - c_j} - \frac{c_i^{2}}{2}}^2 = \sum_{i=1}^m b_i \vbr{\Big}{\frac{1}{2} c_i^2 - \sum_{j=1}^{m} a_{ij} c_j}^2 = 0  \displaybreak\\
        \iff& \frac{1}{4} \sum_{i=1}^m b_i c_i^4 - \sum_{i=1}^m \sum_{j=1}^{m} b_i c_i^2 a_{ij} c_j + \sum_{i=1}^m b_i \vbr{\Big}{\sum_{j=1}^{m} a_{ij} c_j}^2 = 0 \\
        \iff& \frac{1}{4} \times \frac{1}{5} - \frac{1}{10} + \sum_{i=1}^m b_i \vbr{\Big}{\sum_{j=1}^{m} a_{ij} c_j}^2 = 0 \\
        \iff& \text{the condition (16) in Table~\ref{tab_ordcond}},
    \end{align*}
    where the third equivalence follows from the conditions (12) and (13) in Table~\ref{tab_ordcond};
    \begin{align*}
        &\text{the third condition in \eqref{eq_exp_C5}}\\
        \iff&\sum_{j = 1}^{m} b_j \vbr{\Big}{\sum_{k=1}^m a_{jk} \br{c_j - c_k}^3 - \frac{c_j^{4}}{4}} = 0 \\
        \iff&\sum_{j = 1}^{m} b_j c_j^4 - 3 \sum_{j = 1}^{m} \sum_{k=1}^m  b_j c_j^2 a_{jk} c_k + 3 \sum_{j = 1}^{m} \sum_{k=1}^m b_j c_j a_{jk} c_k^2 - \sum_{j = 1}^{m} \sum_{k=1}^m b_j a_{jk} c_k^3 - \frac{1}{4}\sum_{j = 1}^{m} b_j c_j^{4} = 0\\
        \iff&\frac{1}{5} - 3 \times \frac{1}{10} + 3 \times \frac{1}{15} - \sum_{j = 1}^{m} \sum_{k=1}^m b_j a_{jk} c_k^3 - \frac{1}{4}\times \frac{1}{5} = 0\\
        \iff&\text{the condition (17) in Table~\ref{tab_ordcond}},
    \end{align*}
    where the third equivalence is derived by substituting the conditions (12), (13) and (14) from Table~\ref{tab_ordcond};
    \begin{align*}
        &\text{the eighth condition in \eqref{eq_exp_C5}}\\
        \iff& \sum_{i=1}^m \sum_{j = 1}^{m} \sum_{k = 1}^{m} b_i c_i a_{ij} c_j a_{jk}  - \sum_{i=1}^m \sum_{j = 1}^{m} \sum_{k=1}^{m}  b_i c_i a_{ij} a_{jk} c_k - \frac{1}{2} \sum_{i=1}^m \sum_{j = 1}^{m} b_i c_i a_{ij} c_j^{2} \\
        & - \sum_{i=1}^m \sum_{j = 1}^{m} \sum_{k=1}^m b_i a_{ij} c_j^2 a_{jk} + \sum_{i=1}^m \sum_{j = 1}^{m} \sum_{k=1}^{m} b_i a_{ij} c_j a_{jk} c_k + \frac{1}{2} \sum_{i=1}^m \sum_{j = 1}^{m} b_i a_{ij} c_j^{3} = 0\\
        \iff& \sum_{i=1}^m \sum_{j = 1}^{m} b_i c_i a_{ij} c_j^2  - \sum_{i=1}^m \sum_{j = 1}^{m} \sum_{k=1}^{m}  b_i c_i a_{ij} a_{jk} c_k - \frac{1}{2} \sum_{i=1}^m \sum_{j = 1}^{m} b_i c_i a_{ij} c_j^{2} \\
        & - \sum_{i=1}^m \sum_{j = 1}^{m} b_i a_{ij} c_j^3 + \sum_{i=1}^m \sum_{j = 1}^{m} \sum_{k=1}^{m} b_i a_{ij} c_j a_{jk} c_k + \frac{1}{2} \sum_{i=1}^m \sum_{j = 1}^{m} b_i a_{ij} c_j^{3} = 0 \\
        \iff& \frac{1}{2}\sum_{i=1}^m \sum_{j = 1}^{m} b_i c_i a_{ij} c_j^2  - \sum_{i=1}^m \sum_{j = 1}^{m} \sum_{k=1}^{m}  b_i c_i a_{ij} a_{jk} c_k \\
        & - \frac{1}{2} \sum_{i=1}^m \sum_{j = 1}^{m} b_i a_{ij} c_j^3 + \sum_{i=1}^m \sum_{j = 1}^{m} \sum_{k=1}^{m} b_i a_{ij} c_j a_{jk} c_k = 0  \\
        \iff& \frac{1}{2} \times \frac{1}{15} - \frac{1}{30} - \frac{1}{2} \times \frac{1}{20} + \sum_{i=1}^m \sum_{j = 1}^{m} \sum_{k=1}^{m} b_i a_{ij} c_j a_{jk} c_k = 0\\
        \iff& \text{the condition (18) in Table~\ref{tab_ordcond}},
    \end{align*}
    where the third equivalence is obtained by inserting the conditions (14), (15) and (17) from Table~\ref{tab_ordcond};
    \begin{align*}
        &\text{the sixth condition in \eqref{eq_exp_C5}}\\
        \iff& \sum_{i=1}^m \sum_{j=1}^{m} \sum_{k=1}^{m} b_i a_{ij} a_{jk} c_j^2 - 2 \sum_{i=1}^m \sum_{j=1}^{m} \sum_{k=1}^{m} b_i a_{ij} a_{jk} c_j c_k \\
        &\quad\quad + \sum_{i=1}^m \sum_{j=1}^{m} \sum_{k=1}^{m} b_i a_{ij} a_{jk} c_k^2 - \frac{1}{3}\sum_{i=1}^m \sum_{j=1}^{m} b_i a_{ij} c_j^{3} = 0\\
        \iff& \sum_{i=1}^m \sum_{j=1}^{m} b_i a_{ij} c_j^3 - 2 \sum_{i=1}^m \sum_{j=1}^{m} \sum_{k=1}^{m} b_i a_{ij} c_j a_{jk} c_k \\
        &\quad\quad + \sum_{i=1}^m \sum_{j=1}^{m} \sum_{k=1}^{m} b_i a_{ij} a_{jk} c_k^2 - \frac{1}{3} \sum_{i=1}^m \sum_{j=1}^{m} b_i a_{ij} c_j^{3} = 0\\
        \iff& \frac{2}{3}\sum_{i=1}^m \sum_{j=1}^{m} b_i a_{ij} c_j^3 - 2 \sum_{i=1}^m \sum_{j=1}^{m} \sum_{k=1}^{m} b_i a_{ij} c_j a_{jk} c_k + \sum_{i=1}^m \sum_{j=1}^{m} \sum_{k=1}^{m} b_i a_{ij} a_{jk} c_k^2 = 0\\
        \iff& \frac{2}{3} \times \frac{1}{20} - 2 \times \frac{1}{40} + \sum_{i=1}^m \sum_{j=1}^{m} \sum_{k=1}^{m} b_i a_{ij} a_{jk} c_k^2 = 0\\
        \iff& \text{the condition (19) in Table~\ref{tab_ordcond}},
    \end{align*}
    where the fourth equivalence follows from the conditions (17) and (18) in Table~\ref{tab_ordcond};
    \begin{align*}
        &\text{the tenth condition in \eqref{eq_exp_C5}}\\
        \iff& \sum_{i=1}^m \sum_{j=1}^{m} \sum_{k=1}^{m} \sum_{l=1}^{m}  b_i a_{ij} a_{jk} c_k a_{kl} - \sum_{i=1}^m \sum_{j=1}^{m} \sum_{k=1}^{m} \sum_{l=1}^{m} b_i a_{ij} a_{jk} a_{kl} c_l - \frac{1}{2} \sum_{i=1}^m \sum_{j=1}^{m} \sum_{k=1}^{m} b_i a_{ij} a_{jk} c_k^{2} = 0\\
        \iff& \frac{1}{2}\sum_{i=1}^m \sum_{j=1}^{m} \sum_{k=1}^{m} b_i a_{ij} a_{jk} c_k^2 - \sum_{i=1}^m \sum_{j=1}^{m} \sum_{k=1}^{m} \sum_{l=1}^{m} b_i a_{ij} a_{jk} a_{kl} c_l = 0\\
        \iff& \frac{1}{2} \times \frac{1}{60} - \sum_{i=1}^m \sum_{j=1}^{m} \sum_{k=1}^{m} \sum_{l=1}^{m} b_i a_{ij} a_{jk} a_{kl} c_l = 0\\
        \iff& \text{the condition (20) in Table~\ref{tab_ordcond}},
    \end{align*}
    where the third equivalence is obtained by inserting the condition (19) from Table~\ref{tab_ordcond}.
\end{proof}
 
\section{Global error estimates}\label{sec_errest}

In this section, we derive global error estimates for \Cref{sch_rk} using the order conditions stated in \Cref{thm_consistent}.

We first introduce some notation.
For any $\R^d$-valued $\mathcal{F}$-measurable random variable $\eta$, we define the norms $\norm{\eta}_{L^{\infty}} := \inf\{M \in \R: \abs{\eta} < M, \text{a.s.}\}$ and $\norm{\eta}_{L^p} := \br{\E{\abs{\eta}^p}}^{1/p}$ for $p \in [1, \infty)$.
For $p \in [1, \infty]$, denote by $L^p\br{\R^d, \mathcal{F}}$ the space of random variables $\eta: \Omega \to \R^d$ that are $\mathcal{F}$-measurable and $L^p$-integrable, i.e., $\norm{\eta}_{L^p} < \infty$.
For $r \in [1, \infty)$, denote by $\mathbb{L}^r(\R^d)$ the space of stochastic processes $\Gamma: [0, T] \times \Omega \to \R^d$ that are $\mathbb F$-progressively measurable and $L^r$-integrable, i.e., $\mathbb{E}[\int_{[0, T]} \abs{\Gamma_s}^r \di s] < \infty$.
Denote by $\mathbb{L}^{\infty}(\R^d)$ the space of bounded, $\mathbb F$-progressively measurable processes $\Gamma: [0, T] \times \Omega \to \R^d$.
For $r \in [1, \infty]$, define the discrete-time stochastic process space as
\begin{equation*}
\begin{aligned}
    \mathbb{L}_{N}^r\br{\R^{p} \times \R^{p\times q}} := \left\{\left.\left\{\Gamma^n\right\}_{n=0}^N \right|\Gamma^n = (\Gamma_y^n, \Gamma_z^n) \in L^r\br{\R^{p} \times \R^{p\times q}, \mathcal{F}_{t_n}}, \;\; 0 \leq n \leq N \right\}.
\end{aligned}
\end{equation*}
For $a = \rbr{a_{ijl}} \in \R^{p\times p \times q}$ and $z = [z_{jl}] \in \R^{p \times q}$, define $a:z = [c_i] \in \R^p$, where $c_i = \sum_{j=1}^p \sum_{l=1}^q a_{ijl} z_{jl}$ for $i = 1, 2, \cdots, p$.

To analyze the stability of \Cref{sch_rk}, we impose the following assumption.

\begin{assumption}\label{assu_FLipVar}
    There exist positive constants $C_{F1}, C_{F2} > 0$ and a seminorm $\rhof{\cdot}$ on $\R^{p} \times \R^{p\times q}$, such that, for any $\Theta^{\prime}$, $\Theta^{\prime\prime} \in \mathbb{L}^2\br{\R^{p} \times \R^{p\times q}}$ and any $0 \leq t < s \leq T$, 
    \begin{equation}\label{eq_L2normEtDelFts}
    \begin{aligned}
        \E{\srhof{\Et{\Delta F_{t, s}}}} \leq\;& C_{F1} \E{\srhof{\Delta \Theta_{s}}} + \frac{C_{F2}}{s - t} \E{\srhof{\Delta \Theta_s} - \srhof{\Et{\Delta \Theta_s}}},
    \end{aligned}
    \end{equation}
    where $\Delta F_{t, s} := F_t\br{s, \Theta_s^{\prime}} - F_t\br{s, \Theta_s^{\prime\prime}}$ and $\Delta \Theta_s := \Theta_s^{\prime} - \Theta_s^{\prime\prime}$. 
    Moreover, the seminorm $\rhof{\cdot}$ takes the form
    \begin{equation}\label{eq_hilbert_rho}
        \rhof{\theta} = \abs{L\theta} \sptext{for all} \theta \in \R^{p} \times \R^{p\times q},
    \end{equation}
    where $L: \R^{p} \times \R^{p\times q} \to \R^k$ is a deterministic linear map for some integer $k \geq 1$.
\end{assumption}

Assumption~\ref{assu_FLipVar} is introduced for technical reasons.
For clarity, we recall two lemmas from \cite{Fang2023ODE} that impose sufficient conditions on the generator $f$ to ensure Assumption~\ref{assu_FLipVar}.
Lemma~\ref{lemm_fnoz} shows that Assumption~\ref{assu_FLipVar} holds when $f\br{t, x, y, z}$ is independent of $z$ and Lipschitz with respect to $y$.
This setting covers many physical and chemical models, such as the Allen-Cahn equation, the nonconvective Gray-Scott system \cite{John1993Complex}, and more general reaction-diffusion equations without convection.

\begin{lemma}[\!\cite{Fang2023ODE}]\label{lemm_fnoz}
    Assume the generator $f$ of \eqref{bsde} is independent of $z$ and is Lipschitz continuous in $y$ with constant $C$, i.e.,
    \begin{equation}\label{eq_Lipf}
        f(t, x, y, z) = f(t, x, y), \quad \abs{f(t, x, y) - f(t, x, y^{\prime})} \leq C \abs{y - y^{\prime}}
    \end{equation}
    for all $t \in [0, T]$, $x \in \R^d$, $y \in \R^p$, $y^{\prime} \in \R^p$, and $z \in \R^{p\times q}$.
    Then \eqref{eq_L2normEtDelFts} holds with $C_{F1} = C^2$, $C_{F2} = 0$, and the seminorm $\rhof{\cdot} = \abs{\cdot}_{\R^p}$ defined by
    \begin{equation}\label{eq_defnormRp}
        \abs{\br{y, z}}_{\R^p} := \abs{y} =  \vbr{\Big}{\sum_{i=1}^p y_i^2}^{\frac{1}{2}} \sptext{for} \br{y, z} \in \R^p \times \R^{p\times q}.
    \end{equation}
\end{lemma}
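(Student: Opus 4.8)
The plan is to verify the single inequality \eqref{eq_L2normEtDelFts} of \Cref{assu_FLipVar} directly, exploiting the fact that the prescribed seminorm $\rhof{\cdot} = \abs{\cdot}_{\R^p}$ reads off only the first ($\R^p$-valued) component of an element of $\R^p \times \R^{p\times q}$. Writing $\Theta_s^{\prime} = \br{Y_s^{\prime}, Z_s^{\prime}}$ and $\Theta_s^{\prime\prime} = \br{Y_s^{\prime\prime}, Z_s^{\prime\prime}}$, and recalling from \eqref{eq_def_thetaF} that $F_t\br{s, \theta}$ has first component $f\br{s, X_s, \theta}$, the hypothesis $f\br{t, x, y, z} = f\br{t, x, y}$ yields that the first component of $\Delta F_{t, s}$ equals $f\br{s, X_s, Y_s^{\prime}} - f\br{s, X_s, Y_s^{\prime\prime}}$. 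Crucially, the second component of $F_t$, carrying the factor $\Delta W_{t, s}^{\top}/\br{s - t}$ that blows up as $s \to t+$, is annihilated by $\rho$; this is precisely what allows $C_{F2} = 0$.

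First I would apply the conditional Jensen inequality to the first component, obtaining
\begin{equation*}
    \rhof{\Et{\Delta F_{t, s}}} = \abs{\Et{f\br{s, X_s, Y_s^{\prime}} - f\br{s, X_s, Y_s^{\prime\prime}}}} \leq \Et{\abs{f\br{s, X_s, Y_s^{\prime}} - f\br{s, X_s, Y_s^{\prime\prime}}}}.
\end{equation*}
Next, the uniform Lipschitz bound \eqref{eq_Lipf} gives the pointwise estimate $\abs{f\br{s, X_s, Y_s^{\prime}} - f\br{s, X_s, Y_s^{\prime\prime}}} \leq C \abs{Y_s^{\prime} - Y_s^{\prime\prime}} = C \rhof{\Delta \Theta_s}$, whence $\rhof{\Et{\Delta F_{t, s}}} \leq C \Et{\rhof{\Delta \Theta_s}}$. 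Squaring and applying the conditional Jensen inequality a second time yields $\srhof{\Et{\Delta F_{t, s}}} \leq C^2 \Et{\srhof{\Delta \Theta_s}}$, and taking the total expectation together with the tower property gives $\E{\srhof{\Et{\Delta F_{t, s}}}} \leq C^2 \E{\srhof{\Delta \Theta_s}}$, which is exactly \eqref{eq_L2normEtDelFts} with $C_{F1} = C^2$ and $C_{F2} = 0$.

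There is no substantive obstacle here; the argument is a short chain of two conditional Jensen inequalities and one Lipschitz estimate. The only point requiring care is the structural observation that the seminorm $\rho$ discards the $Z$-component of $F_t$: were $f$ to depend on $z$, the factor $\Delta W_{t, s}^{\top}/\br{s-t}$ would survive in $\Delta F_{t, s}$ and its conditional second moment would scale like $\br{s - t}^{-1}$, forcing a genuinely nonzero $C_{F2}$ together with the variance-type correction term $\srhof{\Delta \Theta_s} - \srhof{\Et{\Delta \Theta_s}}$ appearing in \eqref{eq_L2normEtDelFts}. Under the present $z$-independent hypothesis this complication disappears entirely, which is the content of the lemma.
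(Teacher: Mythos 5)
Your proof is correct. Note, however, that the paper contains no proof of this lemma to compare against: it is imported directly from \cite{Fang2023ODE} (the supplementary material proves only Proposition~\ref{prop_Cr_table} and Lemma~\ref{lemm_LipVarRK}), so your argument can only be judged on its own merits. On those merits it is sound and complete: the key structural observation that the seminorm $\rho$ annihilates the second component of $F_t$ — the one carrying the singular factor $\Delta W_{t,s}^{\top}/(s-t)$ — is exactly what makes $C_{F2}=0$ possible, and the chain of two conditional Jensen inequalities, the Lipschitz bound, and the tower property correctly delivers \eqref{eq_L2normEtDelFts} with $C_{F1}=C^2$. Dropping the variance-type term is legitimate since conditional Jensen guarantees $\E{\srhof{\Delta \Theta_s} - \srhof{\Et{\Delta \Theta_s}}}\geq 0$, so setting $C_{F2}=0$ only strengthens the claimed inequality (the mismatch with the phrase ``positive constants'' in \Cref{assu_FLipVar} is an infelicity of the paper's own statement, not of your argument).
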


The following lemma shows that Assumption~\ref{assu_FLipVar} holds when $f\br{t, x, y, z}$ is linear in $\br{y, z}$.
This linearity assumption on $f$ is standard in classical von Neumann stability analyses; see, for instance, \cite{Chassagneux2015Numerical}.

\begin{lemma}[\!\cite{Fang2023ODE}]\label{lemm_linearf}
    Assume the generator $f$ in \eqref{bsde} is of the form
    \begin{equation}\label{eq_linearf}
        f(t, X_t, y, z) = f_{t}^0 + f_{t}^1 y + f_{t}^2 : z
    \end{equation}
    where $f^0 \in \mathbb{L}^2\br{\R^p}$, $f^1 \in \mathbb{L}^{\infty}\br{\R^{p \times p}}$, and $f^2 \in \mathbb{L}^{\infty}\br{\R^{p \times p\times q}}$. Moreover, there exists a constant $C$, independent of $t$ and $s$, such that
    \begin{equation}\label{eq_fs1}
        \abs{f_s^1} + \abs{f_s^2} \leq C, \;\; \abs{f_s^1 - \mathbb{E}_t \rbr{f_s^1}} + \abs{f_s^2 - \mathbb{E}_t \rbr{f_s^2}} \leq C \sqrt{s-t}
    \end{equation}
    for all $0 \leq t \leq s \leq T$, almost surely.
    Then \eqref{eq_L2normEtDelFts} holds with $C_{F1} = C_{F2} = 24 q C^2$, and $\rhof{\cdot}$ taken as the Euclidean norm $\abs{\cdot}$ on $\R^p \times \R^{p \times q}$.
\end{lemma}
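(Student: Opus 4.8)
The plan is to verify the inequality \eqref{eq_L2normEtDelFts} of \Cref{assu_FLipVar} directly, exploiting the linear structure \eqref{eq_linearf}. Writing $\Delta Y_s := Y_s' - Y_s''$, $\Delta Z_s := Z_s' - Z_s''$ and $\Delta f_s := f_s^1 \Delta Y_s + f_s^2 : \Delta Z_s$, the constant term $f_s^0$ cancels in the difference, so by \eqref{eq_def_thetaF} the increment splits as $\Delta F_{t,s} = \br{\Delta f_s, \; \Delta f_s \, \Delta W_{t,s}^{\top}/(s-t)}$. Since $\rho$ is the Euclidean norm, $\srhof{\Et{\Delta F_{t,s}}} = \abs{\Et{\Delta f_s}}^2 + \abs{\Et{\Delta f_s \, \Delta W_{t,s}^{\top}/(s-t)}}^2$, and I would bound the two summands separately. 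The first is harmless; the second, carrying the singular factor $1/(s-t)$ together with the Brownian increment, is where all the work lies.

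For the first summand, conditional Jensen and the bound $\abs{f_s^1} + \abs{f_s^2} \leq C$ from \eqref{eq_fs1} give $\abs{\Et{\Delta f_s}}^2 \leq \Et{\abs{\Delta f_s}^2} \leq 2C^2 \Et{\abs{\Delta Y_s}^2 + \abs{\Delta Z_s}^2}$; taking $\E{\cdot}$ yields a clean multiple of $\E{\srhof{\Delta\Theta_s}}$, which feeds the $C_{F1}$-term. For the second summand I would first use that $\Delta W_{t,s}$ is independent of $\F_t$ with $\Et{\Delta W_{t,s}} = 0$, so that $\Et{\Delta f_s \, \Delta W_{t,s}^{\top}} = \Et{(\Delta f_s - \Et{\Delta f_s})\Delta W_{t,s}^{\top}}$; one may thus replace $\Delta f_s$ by its conditional fluctuation inside the conditional expectation. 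A componentwise conditional Cauchy--Schwarz, together with $\Et{\abs{\Delta W_{t,s}^l}^2} = s-t$, then converts one of the two powers of $1/(s-t)$ into a factor $(s-t)$, leaving $\abs{\Et{\Delta f_s \, \Delta W_{t,s}^{\top}/(s-t)}}^2 \leq \frac{q}{s-t}\,\Et{\abs{\Delta f_s - \Et{\Delta f_s}}^2}$.

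It remains to control the conditional variance $\Et{\abs{\Delta f_s - \Et{\Delta f_s}}^2}$. Using that the conditional mean minimizes the conditional $L^2$-distance, I would bound it by $\Et{\abs{\Delta f_s - a}^2}$ for the $\F_t$-measurable choice $a := \Et{f_s^1}\Et{\Delta Y_s} + \Et{f_s^2} : \Et{\Delta Z_s}$, and then split $f_s^1 = \Et{f_s^1} + (f_s^1 - \Et{f_s^1})$ and likewise for $f_s^2$. This splitting is the \emph{crux}, and the main obstacle is precisely that the prefactor $1/(s-t)$ is singular as $s \to t+$. After the split, $\Delta f_s - a$ decomposes into a part $\Et{f_s^1}(\Delta Y_s - \Et{\Delta Y_s}) + \Et{f_s^2} : (\Delta Z_s - \Et{\Delta Z_s})$, whose coefficients are bounded by $C$ and which reproduces, after the $q/(s-t)$ factor, exactly the conditional-variance structure $\frac{1}{s-t}\E{\srhof{\Delta\Theta_s} - \srhof{\Et{\Delta\Theta_s}}}$ of the $C_{F2}$-term; and a part $(f_s^1 - \Et{f_s^1})\Delta Y_s + (f_s^2 - \Et{f_s^2}) : \Delta Z_s$, whose coefficients are bounded by $C\sqrt{s-t}$ by the second estimate in \eqref{eq_fs1}.

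It is this $\sqrt{s-t}$ bound that is decisive: squaring it produces a factor $s-t$ that cancels the remaining $1/(s-t)$, turning a would-be singular contribution into a bounded multiple of $\E{\srhof{\Delta\Theta_s}}$ that also feeds the $C_{F1}$-term. Collecting the pieces and invoking the elementary conditional-variance identity $\E{\srhof{\Delta\Theta_s} - \srhof{\Et{\Delta\Theta_s}}} = \E{\abs{\Delta\Theta_s - \Et{\Delta\Theta_s}}^2}$ then yields \eqref{eq_L2normEtDelFts}. Keeping track of the numerical factors (a $2$ from $\abs{a+b}^2 \le 2\abs{a}^2 + 2\abs{b}^2$, a $4$ from the four-term split, and the dimension factor $q$) shows that $C_{F1} = C_{F2} = 24 q C^2$ is a valid, if generous, common bound, which establishes \Cref{assu_FLipVar} with the seminorm $\rhof{\cdot} = \abs{\cdot}$.
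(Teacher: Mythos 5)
Your proposal cannot be compared against a proof inside this paper: \Cref{lemm_linearf} is stated here as an imported result from \cite{Fang2023ODE}, and the paper's supplementary section proves only \Cref{prop_Cr_table} and \Cref{lemm_LipVarRK}, giving no argument for this lemma. Judged on its own merits, your blind proof is correct and complete. The splitting $\srhof{\Et{\Delta F_{t,s}}} = \abs{\Et{\Delta f_s}}^2 + \abs{\Et{\Delta f_s \,\Delta W_{t,s}^{\top}}}^2/(s-t)^2$, the centering identity $\Et{\Delta f_s \,\Delta W_{t,s}^{\top}} = \Et{\br{\Delta f_s - \Et{\Delta f_s}}\Delta W_{t,s}^{\top}}$ (valid because $\Delta W_{t,s}$ is independent of $\F_t$ with zero mean), and the componentwise conditional Cauchy--Schwarz bound $\abs{\Et{\br{\Delta f_s - \Et{\Delta f_s}}\Delta W_{t,s}^{\top}}}^2 \le q\br{s-t}\Et{\abs{\Delta f_s - \Et{\Delta f_s}}^2}$ are all sound, the factor $q$ coming from the sum over the $q$ Brownian components. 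The crux is handled properly: replacing the conditional mean by the $\F_t$-measurable competitor $a = \Et{f_s^1}\Et{\Delta Y_s} + \Et{f_s^2} : \Et{\Delta Z_s}$ (legitimate since the conditional mean minimizes conditional $L^2$-distance, componentwise), then splitting into fluctuation terms with coefficients bounded by $C$ and remainder terms with coefficients bounded by $C\sqrt{s-t}$, is exactly how both halves of \eqref{eq_fs1} enter, and squaring the $\sqrt{s-t}$ bound cancels the $1/(s-t)$ singularity as you say. Tracking your constants gives $C_{F1} \le 2C^2 + 4qC^2 \le 6qC^2$ and $C_{F2} \le 4qC^2$; since both terms on the right-hand side of \eqref{eq_L2normEtDelFts} are nonnegative (the second by conditional Jensen), an inequality with these smaller constants implies the one with $24qC^2$, so the stated constants are recovered, generously, just as you claim.
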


We now prove the stability of \Cref{sch_rk}.
For notational brevity, we rewrite \Cref{sch_rk} as a one-step scheme of the form
\begin{equation}\label{eq_rk1s}
    \Theta^{n} = \Etn{\Theta^{n+1}} + \Delta t_n \Etn{\Phi^{n+1}\br{\Theta^{n+1}}}, \;\; n = N-1, N-2, \cdots, 0,
\end{equation}
where $\Phi^{n+1}\br{\cdot}$ is the operator defined by
\begin{gather}
    \Phi^{n+1}\br{\Gamma^{n+1}} := \sum_{i=1}^m b_{i} F_{t_{n}}\br{t_{n, i}, \Gamma^{n, i}}, \label{eq_Phirk}\\
    % \Gamma^{n+1} \in L^2\br{\R^p \times \R^{p\times q}, \mathcal{F}_{t_{n+1}}}, \\
    \Gamma^{n, i} := \Etni{\Gamma^{n+1}} + \Delta t_n \sum_{j=i+1}^m a_{ij}\Etni{F_{t_{n, i}}\br{t_{n, j}, \Gamma^{n, j}}}, \quad i = 1, 2, \cdots, m\notag 
\end{gather}
for $\Gamma^{n+1} \in L^2\br{\R^p \times \R^{p\times q}, \mathcal{F}_{t_{n+1}}}$. 
We then present the following lemma, which shows that $\Phi^{n+1}\br{\cdot}$ inherits the property of $F_t\br{s, \cdot}$ given in \eqref{eq_L2normEtDelFts}.
This property will be used to derive the stability result for \Cref{sch_rk}.

\begin{lemma}\label{lemm_LipVarRK}
    Let \Cref{assu_FLipVar,assu_coeff} hold, and $\Theta_1, \Theta_2 \in \mathbb{L}_{N}^2(\R^{p} \times \R^{p\times q})$ be arbitrary discrete processes. 
    For $n = 0, 1, \cdots, N-1$, it holds that
    \begin{equation}\label{eq_proveDPhi}
        \E{\srhof{\Etn{\Delta \Phi^{n+1}}}} \leq \bar{C}_F \E{\srhof{\Delta \Theta^{n+1}}} + \frac{\bar{C}_F}{\Delta t_n} \E{\srhof{\Delta \Theta^{n+1}} - \srhof{\Etn{\Delta \Theta^{n+1}}}}
    \end{equation} 
    for $\Delta t_n \leq \min\{1, (4 m^2 \bar{a}^2 (C_{F1} + 2 C_{F2} \delta_c^{-1}))^{-1}\}$, where
    \begin{align*}
        &\bar{C}_F := 4 m^3 \bar{b}^2 \br{C_{F1} + 2 C_{F2} \delta_c^{-1}}, \quad \bar{a} := \max_{i,j=1,2\cdots,m} \abs{a_{ij}}, \\
        &\bar{b} := \max_{i,j=1,2\cdots,m} \abs{b_{i}}, \quad \delta_c := \min_{0 \leq i < j \leq m} (c_i - c_j),
    \end{align*}
    \begin{equation}\label{eq_defDphi}
        \Delta \Phi^{n+1} := \Phi^{n+1}\br{\Theta_1^{n+1}} - \Phi^{n+1}\br{\Theta_2^{n+1}}, \;\; \Delta \Theta^{n+1} := \Theta_1^{n+1} - \Theta_2^{n+1}. 
    \end{equation}
\end{lemma}

\begin{proof}
    Let $\Theta_1$ and $\Theta_2$ be arbitrary discrete processes in $\mathbb{L}_{N}^2\br{\R^{p} \times \R^{p\times q}}$.
    For $n = 0, 1, \cdots, N-1$, $i = 1, 2, \cdots, m$, $l = 1, 2$ and $t < t_{n, i}$, define
    \begin{align}
         & \Theta_l^{n, i} := \Etni{\Theta_l^{n+1}} + \Delta t_n \sum_{j=i+1}^m a_{ij}\Etni{F_{t_{n, i}}\br{t_{n, j}, \Theta_l^{n, j}}},\label{eq_Gammani} \\
         & \Delta \Theta^{n, i} := \Theta_1^{n, i} - \Theta_2^{n, i}, \;\; \Delta \widetilde{F}_{t}^{n, i} := F_{t}\br{t_{n, i}, \Theta_1^{n, i}} - F_{t}\br{t_{n, i}, \Theta_2^{n, i}}.  \label{eq_DeltaPhirk}
    \end{align}
    By substituting \eqref{eq_Phirk} into the right-hand side of the first equation in \eqref{eq_defDphi} and then using the second equation in \eqref{eq_DeltaPhirk},
    we obtain $\Delta \Phi^{n+1} = \sum_{i=1}^m b_{i} \Delta \widetilde{F}_{t_n}^{n, i}$.
    Taking $\E{\srhof{\cdot}}$ on both sides of this equation, we obtain
    % \begin{equation}
    % \begin{aligned}
    %     \E{\srhof{\Etn{\Delta \Phi^{n+1}}}} = \E{\srhof{\sum_{i=1}^m b_{i} \Etn{\Delta \widetilde{F}_{t_n}^{n, i}}}} \leq\;& m^2 \bar{b}^2 \E{\srhof{\frac{1}{m}\sum_{i=1}^m \Etn{\Delta \widetilde{F}_{t_n}^{n, i}}}} \notag \\
    %     \leq\; & m \bar{b}^2 \sum_{i=1}^m \E{\srhof{\Etn{\Delta \widetilde{F}_{t_n}^{n, i}}}},
    % \end{aligned}
    % \end{equation}
    \begin{equation*}
    \refstepcounter{equation}\tag{\theequation}\label{eq_estDelPhirk}
    \begin{aligned}
        \E{\srhof{\Etn{\Delta \Phi^{n+1}}}} = \E{\srhof{\sum_{i=1}^m b_{i} \Etn{\Delta \widetilde{F}_{t_n}^{n, i}}}} \leq\;& \E{\bbr{\sum_{i=1}^m |b_{i}| \rhof{\Etn{\Delta \widetilde{F}_{t_n}^{n, i}}}}^2} \notag \\
        \leq\; & \sum_{j=1}^m |b_{j}|^2 \sum_{i=1}^m \E{\srhof{\Etn{\Delta \widetilde{F}_{t_n}^{n, i}}}} \notag \\
        \leq\; & m \bar{b}^2 \sum_{i=1}^m \E{\srhof{\Etn{\Delta \widetilde{F}_{t_n}^{n, i}}}},
    \end{aligned}
    \end{equation*}
    where the first and the second inequality follows from the triangle inequality and the Cauchy-Schwarz inequality, respectively.

    For $0 \leq i < j \leq m$, inserting \eqref{eq_Gammani} into the right side of the first equation in \eqref{eq_DeltaPhirk}, and then using the second equation in \eqref{eq_DeltaPhirk}, we obtain
    \begin{equation*}
        \refstepcounter{equation}\tag{\theequation}\label{eq_DelThetani}
        \Delta \Theta^{n, j} = \Etnj{\Delta \Theta^{n+1}} + \Delta t_n \sum_{l=j+1}^m a_{jl} \Etnj{\Delta \widetilde{F}_{t_{n, j}}^{n, l}}, 
    \end{equation*}
    where $\Delta \Theta^{n+1}$ is given in \eqref{eq_defDphi}.
    Taking $\Etni{\cdot}$ on both sides of \eqref{eq_DelThetani}, we obtain
    \begin{equation}\label{eq_DelThetanj}
        \Etni{\Delta \Theta^{n, j}} = \Etni{\Delta \Theta^{n+1}} + \Delta t_n \sum_{l=j+1}^m a_{jl} \Etni{\Delta \widetilde{F}_{t_{n, j}}^{n, l}}.  
    \end{equation}
    Subtracting \eqref{eq_DelThetanj} from \eqref{eq_DelThetani}, we obtain
    \begin{equation}\label{eq_Del2Thetani}
    \begin{aligned}
        \Delta \Theta^{n, j} - \Etni{\Delta \Theta^{n, j}} =\; & \Etnj{\Delta \Theta^{n+1}} - \Etni{\Delta \Theta^{n+1}}\\
        & + \Delta t_n \sum_{l=j+1}^m a_{jl} \br{\Etnj{\Delta \widetilde{F}_{t_{n, j}}^{n, l}} - \Etni{\Delta \widetilde{F}_{t_{n, j}}^{n, l}}}.
    \end{aligned}
    \end{equation}
    Taking $\E{\srhof{\cdot}}$ on both sides of \eqref{eq_DelThetani} and \eqref{eq_Del2Thetani}, and then using Jensen's inequality, we obtain
    \begin{equation}\label{eq_norm2DelTheta}
    \begin{aligned}
        \E{\srhof{\Delta \Theta^{n, j}}} \leq\; & 2 \E{\srhof{\Etnj{\Delta \Theta^{n+1}}}} + 2 m \bar{a}^2 \br{\Delta t_n}^2 \sum_{l=j+1}^m \E{\srhof{\Etnj{\Delta \widetilde{F}_{t_{n, j}}^{n, l}}}} \\
        \leq\; & 2 \E{\srhof{\Delta \Theta^{n+1}}} + 2 m \bar{a}^2 \br{\Delta t_n}^2 \sum_{l=j+1}^m \E{\srhof{\Etnj{\Delta \widetilde{F}_{t_{n, j}}^{n, l}}}},
    \end{aligned}
    \end{equation}
    and
    \begin{equation}\label{eq_norm2Del2Theta}
        \begin{aligned}
            \E{\srhof{\Delta \Theta^{n, j} - \Etni{\Delta \Theta^{n, j}}}} \leq\; & 2 \E{\srhof{\Etnj{\Delta \Theta^{n+1}} - \Etni{\Delta \Theta^{n+1}}}}\\
            & + 4 m \bar{a}^2 \br{\Delta t_n}^2 \sum_{l=j+1}^m \E{\srhof{\Etnj{\Delta \widetilde{F}_{t_{n, j}}^{n, l}}}},
        \end{aligned}
    \end{equation}
    where $\bar{a}$ is given in Lemma~\ref{lemm_LipVarRK}.
    
    Recalling \eqref{eq_DeltaPhirk} and Assumption~\ref{assu_FLipVar}, we have
    \begin{equation}\label{eq_EtniDelFnj}
        \begin{aligned}
            \E{\srhof{\Etni{\Delta \widetilde{F}_{t_{n, i}}^{n, j}}}} \leq\; & C_{F1} \E{\srhof{\Delta \Theta^{n, j}}} + \frac{C_{F2}}{t_{n, j} - t_{n, i}} \E{\srhof{\Delta \Theta^{n, j} - \Etni{\Delta \Theta^{n, j}}}}.
        \end{aligned}
    \end{equation}
    Inserting \eqref{eq_norm2DelTheta} and \eqref{eq_norm2Del2Theta} into the right side of \eqref{eq_EtniDelFnj}, we obtain
    \begin{equation}\label{eq_EtniDelFnj2}
        \begin{aligned}
            & \E{\srhof{\Etni{\Delta \widetilde{F}_{t_{n, i}}^{n, j}}}}\\
            \leq\; & 2 C_{F1} \E{\srhof{\Delta \Theta^{n+1}}} + \frac{2 C_{F2}}{t_{n, j} - t_{n, i}} \E{\srhof{\Etnj{\Delta \Theta^{n+1}} - \Etni{\Delta \Theta^{n+1}}}} \\
            & + 4 m \bar{a}^2 C_{F2} \Delta t_n \frac{\Delta t_n}{t_{n, j} - t_{n, i}} \sum_{l=j+1}^m \E{\srhof{\Etnj{\Delta \widetilde{F}_{t_{n, j}}^{n, l}}}}   \\
            & + 2 m \bar{a}^2 C_{F1} \br{\Delta t_n}^2 \sum_{l=j+1}^m \E{\srhof{\Etnj{\Delta \widetilde{F}_{t_{n, j}}^{n, l}}}}.
        \end{aligned}
    \end{equation}
    
    Now we estimate the second term on the right-hand side of \eqref{eq_EtniDelFnj2}: 
    \begin{equation}\label{eq_estEtniDelFnj}
    \begin{aligned}
        \E{\srhof{\Etnj{\Delta \Theta^{n+1}} - \Etni{\Delta \Theta^{n+1}}}}
        =\;& \E{\abs{\Etnj{L \Delta \Theta^{n+1}} - \Etni{L \Delta \Theta^{n+1}}}^2}\\
        =\;& \E{\abs{\Etnj{L \Delta \Theta^{n+1}}}^2} - \E{\abs{\Etni{L \Delta \Theta^{n+1}}}^2} \\
        \leq\;& \E{\abs{L \Delta \Theta^{n+1}}^2} - \E{\abs{\Etn{L \Delta \Theta^{n+1}}}^2}\\
        =\;& \E{\srhof{\Delta \Theta^{n+1}} - \srhof{\Etn{\Delta \Theta^{n+1}}}},
    \end{aligned}
    \end{equation}
    where the first and the last line follow from \eqref{eq_hilbert_rho}; 
    the second line is obtained by orthogonal decomposition;
    the third line follows from the facts
    \begin{equation*}
        \E{\abs{\Etnj{L \Delta \Theta^{n+1}}}^2} \leq \E{\abs{L \Delta \Theta^{n+1}}^2}, \;\; \E{\abs{\Etni{L \Delta \Theta^{n+1}}}^2} \geq \E{\abs{\Etn{L \Delta \Theta^{n+1}}}^2}. 
    \end{equation*}

    Inserting \eqref{eq_estEtniDelFnj} into the right-hand side of \eqref{eq_EtniDelFnj2}, and using the last condition in \eqref{eq_ajieq0_maxabc}, we obtain
    \begin{equation}\label{eq_estDelFtnij}
    \begin{aligned}
        \E{\srhof{\Etni{\Delta \widetilde{F}_{t_{n, i}}^{n, j}}}} \leq\; & 2 C_{F1} \E{\srhof{\Delta \Theta^{n+1}}} + \frac{2 C_{F2}}{\delta_c \Delta t_n} \E{\srhof{\Delta \Theta^{n+1}} - \srhof{\Etn{\Delta \Theta^{n+1}}}}  \\
        & + 2 m \bar{a}^2 \br{C_{F1} \Delta t_n + 2 C_{F2} \delta_c^{-1}} \Delta t_n \sum_{l=j+1}^m \E{\srhof{\Etnj{\Delta \widetilde{F}_{t_{n, j}}^{n, l}}}}.
    \end{aligned}
    \end{equation}
    Taking $\sum_{i=0}^{m-1} \sum_{j=i+1}^m$ on both side of \eqref{eq_estDelFtnij} yields
    \begin{equation*}
    \begin{aligned}
        & \sum_{i=0}^{m-1} \sum_{j=i+1}^m \E{\srhof{\Etni{\Delta \widetilde{F}_{t_{n, i}}^{n, j}}}}\\
        \leq\; & 2 m^2 C_{F1} \E{\srhof{\Delta \Theta^{n+1}}} + \frac{2 m^2 C_{F2}}{\delta_c \Delta t_n} \E{\srhof{\Delta \Theta^{n+1}} - \srhof{\Etn{\Delta \Theta^{n+1}}}}\\
        & + 2 m^2 \bar{a}^2 \br{C_{F1} \Delta t_n + 2 C_{F2} \delta_c^{-1}} \Delta t_n \sum_{j=0}^{m-1} \sum_{l=j+1}^m \E{\srhof{\Etnj{\Delta \widetilde{F}_{t_{n, j}}^{n, l}}}}. 
    \end{aligned}
    \end{equation*}
    Rearranging the terms leads to
    \begin{equation*}
    \begin{aligned}
        & \vbr{\Big}{1 - 2 m^2 \bar{a}^2 \br{C_{F1} \Delta t_n + 2 C_{F2} \delta_c^{-1}} \Delta t_n} \sum_{i=0}^{m-1} \sum_{j=i+1}^m \E{\srhof{\Etni{\Delta \widetilde{F}_{t_{n, i}}^{n, j}}}} \\
        \leq\; & 2 m^2 C_{F1} \E{\srhof{\Delta \Theta^{n+1}}} + \frac{2 m^2 C_{F2}}{\delta_c \Delta t_n} \E{\srhof{\Delta \Theta^{n+1}} - \srhof{\Etn{\Delta \Theta^{n+1}}}}.
    \end{aligned}
    \end{equation*}
    Letting $\Delta t_n \leq \min\{1, \br{4 m^2 \bar{a}^2 \br{C_{F1} + 2 C_{F2} \delta_c^{-1}}}^{-1}\}$, the above inequality can be simplified into 
    \begin{equation}\label{eq1_estSumrho}
        \begin{aligned}
            & \sum_{i=0}^{m-1} \sum_{j=i+1}^m \E{\srhof{\Etni{\Delta \widetilde{F}_{t_{n, i}}^{n, j}}}} \\
            \leq\; & 4 m^2 C_{F1} \E{\srhof{\Delta \Theta^{n+1}}} + \frac{4 m^2 C_{F2}}{\delta_c \Delta t_n} \E{\srhof{\Delta \Theta^{n+1}} - \srhof{\Etn{\Delta \Theta^{n+1}}}}. 
        \end{aligned}
    \end{equation}
    Furthermore, the first line of \eqref{eq1_estSumrho} can be bounded from below as follows:
    \begin{equation}\label{eq2_estSumrho}
        \sum_{j=1}^m \E{\srhof{\Etn{\Delta \widetilde{F}_{t_{n}}^{n, j}}}} \leq \sum_{i=0}^{m-1} \sum_{j=i+1}^m \E{\srhof{\Etni{\Delta \widetilde{F}_{t_{n, i}}^{n, j}}}}
    \end{equation}
    which follows from the fact that the left side is a summand of the right side.
    
    Combining \eqref{eq1_estSumrho} with \eqref{eq2_estSumrho} and substituting the resulting inequality into \eqref{eq_estDelPhirk}, we complete the proof.
\end{proof}

To analyze the stability of \Cref{sch_rk}, we introduce a perturbation process $\zeta \in \mathbb{L}_{N}^2 \br{\R^{p} \times \R^{p\times q}}$ into \Cref{sch_rk}, leading to the following perturbed scheme:
\begin{equation}\label{eq_pert1s}
    \Theta_{\zeta}^{n} = \Etn{\Theta_{\zeta}^{n+1}} + \Delta t_n \Etn{\Phi^{n+1}\br{\Theta_{\zeta}^{n+1}}} + \zeta^n
\end{equation}
for $n = N-1, N-2, \cdots, 0$, with perturbed terminal values $\Theta_{\zeta}^{N} = \Theta^N + \zeta^{N}$.
The stability result of \Cref{sch_rk} is given by \Cref{thm_rk_stable} below. 

\begin{theorem}[Stability]\label{thm_rk_stable}
    Under \Cref{assu_coeff,assu_FLipVar}, \Cref{sch_rk} is stable with respect to the seminorm $\rhof{\cdot}$; namely, there exist positive constants $C$ and $\bar{\delta}$, both independent of the time partition $\pi_N$, such that, for any $\Delta t \in (0, \bar{\delta}]$,
    \begin{equation*}
        \max_{0\leq n\leq {N-1}} \E{\srhof{\Theta_{\zeta}^n - \Theta^n}} \leq C \vbr{\Big}{\E{\srhof{\zeta^N}} + \frac{1}{\Delta t} \sum_{i=0}^{N-1} \E{\srhof{\zeta^i}}}.
    \end{equation*}
\end{theorem}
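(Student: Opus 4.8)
The plan is to derive a backward one-step recursion for the error $e^n := \Theta_{\zeta}^n - \Theta^n$ and then close it by a discrete Gr\"onwall argument. Subtracting \eqref{eq_rk1s} from \eqref{eq_pert1s} yields the error equation
\[
    e^n = \Etn{e^{n+1}} + \Delta t_n \Etn{\Delta \Phi^{n+1}} + \zeta^n,
\]
where $\Delta \Phi^{n+1} := \Phi^{n+1}\br{t_{n+1}, \Theta_{\zeta}^{n+1}} - \Phi^{n+1}\br{t_{n+1}, \Theta^{n+1}}$ and all three summands are $\F_{t_n}$-measurable. Since $\rhof{\cdot}$ is a seminorm, I would apply Young's inequality with a free parameter $\gamma > 0$ to peel off the leading term,
\[
    \srhof{e^n} \leq \br{1 + \gamma \Delta t_n} \srhof{\Etn{e^{n+1}}} + \br{1 + \frac{1}{\gamma \Delta t_n}} \srhof{\Delta t_n \Etn{\Delta \Phi^{n+1}} + \zeta^n},
\]
and then bound the last summand by $2 \br{\Delta t_n}^2 \srhof{\Etn{\Delta \Phi^{n+1}}} + 2 \srhof{\zeta^n}$.

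Taking $\E{\cdot}$ and invoking \Cref{lemm_LipVarRK} to estimate $\E{\srhof{\Etn{\Delta \Phi^{n+1}}}}$ produces two kinds of terms: a benign multiple of $\E{\srhof{e^{n+1}}}$, and the conditional-variance gap $V^{n+1} := \E{\srhof{e^{n+1}} - \srhof{\Etn{e^{n+1}}}} \geq 0$, the latter carrying a seemingly dangerous factor $\bar{C}_F / \Delta t_n$. The crux of the argument is to absorb this gap through the tower-property identity $\E{\srhof{\Etn{e^{n+1}}}} = \E{\srhof{e^{n+1}}} - V^{n+1}$. Collecting the coefficients of $V^{n+1}$, namely $-\br{1 + \gamma \Delta t_n}$ from the leading term and $\br{2 \br{\Delta t_n}^2 + \frac{2 \Delta t_n}{\gamma}} \frac{\bar{C}_F}{\Delta t_n} = 2 \bar{C}_F \Delta t_n + \frac{2 \bar{C}_F}{\gamma}$ from the lemma, I would choose $\gamma = 2 \bar{C}_F$ so that the net coefficient of $V^{n+1}$ becomes nonpositive and the gap can simply be discarded. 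This is precisely the step where the $1/(s-t)$ structure postulated in \Cref{assu_FLipVar}, and propagated through \Cref{lemm_LipVarRK}, is indispensable, and it is the main obstacle: a naive Jensen bound $\srhof{\Etn{e^{n+1}}} \leq \Etn{\srhof{e^{n+1}}}$ would throw away exactly the negative gap needed to offset the $1/\Delta t_n$ blow-up.

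Once $V^{n+1}$ is dropped, the surviving $\E{\srhof{e^{n+1}}}$ contributions combine into a factor $1 + C' \Delta t_n$ (using $\Delta t_n \leq 1$), while the perturbation yields $\br{1 + \frac{1}{\gamma \Delta t_n}} 2 \E{\srhof{\zeta^n}} \leq \frac{C''}{\Delta t_n} \E{\srhof{\zeta^n}}$. This gives the recursion
\[
    \E{\srhof{e^n}} \leq \br{1 + C' \Delta t_n} \E{\srhof{e^{n+1}}} + \frac{C''}{\Delta t_n} \E{\srhof{\zeta^n}},
\]
valid whenever $\Delta t_n \leq \bar{\delta}$, with $\bar{\delta}$ chosen to meet the admissibility condition of \Cref{lemm_LipVarRK}. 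Finally I would unroll this recursion backward from $e^N = \zeta^N$: the product $\prod_k \br{1 + C' \Delta t_k} \leq e^{C' T}$ remains uniformly bounded, and the quasi-uniformity $C_{\pi_N} \leq C_0$ in \eqref{eq_timepart} turns each $1/\Delta t_k$ into at most $C_0 / \Delta t$. Taking the maximum over $0 \leq n \leq N-1$ then delivers the claimed estimate with $C$ and $\bar{\delta}$ independent of $\pi_N$.
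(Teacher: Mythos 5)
Your proposal is correct and follows essentially the same route as the paper's own proof: the identical error recursion, the same key \Cref{lemm_LipVarRK}, a weighted Young inequality whose free parameter is tuned so that the $1/\Delta t_n$ conditional-variance term is exactly neutralized (your cancellation of the coefficient of $V^{n+1}$ via $\gamma = 2\bar{C}_F$ is the same mechanism as the paper's choice $\delta_2 = (C C_{\pi_N}^3)^{-1}$, which zeroes the coefficient of $\E{\srhof{\Etn{\mathcal{E}^{n+1}}}}$), followed by a discrete Gr\"onwall argument. The only cosmetic difference is that you unroll the one-step recursion directly, whereas the paper sums over $n$ and invokes Lemma~3 of \cite{zhao2010stable}.
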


\begin{proof}

    For $n = 0, 1, \cdots, N-1$, define $\mathcal{E}^n := \Theta_{\zeta}^n - \Theta^n$.
    By subtracting \eqref{eq_rk1s} from \eqref{eq_pert1s}, we obtain
    \begin{equation}\label{eq_1sEn}
        \mathcal{E}^n = \Etn{\mathcal{E}^{n+1}} + \Delta t_n \Etn{\mathcal{E}_{\Phi}^{n+1}} + \zeta^n,
    \end{equation}
    where
    \begin{equation}\label{eq_defEphi}
        \mathcal{E}_{\Phi}^{n+1} := \Phi^{n+1}\br{\Theta_{\zeta}^{n+1}} - \Phi^{n+1}\br{\Theta^{n+1}}.
    \end{equation}

    Let $\delta_2 > 0$ be a constant to be specified later. For any $\eta > 0$ and any random vectors $a$, $b$, and $c$ in $L^2\br{\R^p \times \R^{p\times q}, \mathcal{F}}$, the following inequality holds
    \begin{equation*}
    \begin{aligned}
        & \srhof{a + b +c} \leq \abs{\rhof{a + b} + \rhof{c}}^2 \\
        \leq\; & \br{1+\eta} \abs{\rhof{a} + \rhof{b}}^2 + \br{1+\frac{1}{\eta}}\srhof{c}\\
        \leq\; & \br{1+\eta}\br{1+\frac{\eta}{\delta_2}}\srhof{a} + \br{1+\eta}\br{1+\frac{\delta_2}{\eta}}\srhof{b} + \br{1+\frac{1}{\eta}}\srhof{c}.
    \end{aligned}
    \end{equation*}
    Applying $\E{\srhof{\cdot}}$ to both sides of \eqref{eq_1sEn} with $\eta = \Delta t_n$, and then using the above inequality, we obtain
    \begin{equation}\label{eq_rkEn2}
    \begin{aligned}
        \E{\srhof{\mathcal{E}^n}} \leq\; & \br{1 + \Delta t_n} \br{1 + \frac{\Delta t_n}{\delta_2}} \E{\srhof{\Etn{\mathcal{E}^{n+1}}}}                                    \\
        & + \br{\Delta t_n}^2 \br{1+\Delta t_n} \br{1 + \frac{\delta_2}{\Delta t_n}} \E{\srhof{\Etn{\mathcal{E}_{\Phi}^{n+1}}}} + \br{1+\frac{1}{\Delta t_n}} \E{\srhof{\zeta^n}},
    \end{aligned}
    \end{equation}
    Recalling the definition in \eqref{eq_defEphi}, Lemma~\ref{lemm_LipVarRK} implies that there exists a constant $c_0$, independent of $\pi_N$, such that
    \begin{equation*}
        \E{\srhof{\Etn{\mathcal{E}_{\Phi}^{n+1}}}} \leq c_0 \br{1 + \frac{1}{\Delta t_n}} \E{\srhof{\mathcal{E}^{n+1}}} - \frac{c_0}{\Delta t_n} \E{\srhof{\Etn{\mathcal{E}^{n+1}}}}.
    \end{equation*}
    Inserting the above inequality into the right side of \eqref{eq_rkEn2}, we obtain
    \begin{equation}\label{eq_rkEn3}
    \begin{aligned}
        & \E{\srhof{\mathcal{E}^n}}\\
        \leq\; & \vbr{\Big}{\br{1 + \Delta t_n} \br{1 + \frac{\Delta t_n}{\delta_2}} - c_0 \Delta t_n \br{1+\Delta t_n} \br{1 + \frac{\delta_2}{\Delta t_n}}} \E{\srhof{\Etn{\mathcal{E}^{n+1}}}}\\
        & + c_0 \br{\Delta t_n}^2 \br{1+\Delta t_n} \br{1 + \frac{\delta_2}{\Delta t_n}} \br{1 + \frac{1}{\Delta t_n}} \E{\srhof{\mathcal{E}^{n+1}}} + \br{1+\frac{1}{\Delta t_n}} \E{\srhof{\zeta^n}}.
    \end{aligned}
    \end{equation}
    
    By Jensen's inequality, we have 
    \begin{equation*}
        \E{\srhof{\Etn{\mathcal{E}^{n+1}}}} \leq \E{\srhof{\mathcal{E}^{n+1}}}. 
    \end{equation*}
    Inserting the above inequality into \eqref{eq_rkEn3} and taking $\delta_2 = \min\{c_0^{-1}, 1\}$, we obtain 
    \begin{equation}\label{eq_rkEEn2}
    \begin{aligned}
        \E{\srhof{\mathcal{E}^n}} \leq\; & \br{1+\Delta t_n}^2 \br{1 + c_0 \Delta t_n} \E{\srhof{\mathcal{E}^{n+1}}} + \br{1+\frac{1}{\Delta t_n}} \E{\srhof{\zeta^n}} \\
        \leq\; & \br{1 + C_3 \Delta t} \E{\srhof{\mathcal{E}^{n+1}}} + \frac{C_4}{\Delta t} \E{\srhof{\zeta^n}}
    \end{aligned}
    \end{equation}
    for $\Delta t_n \in (0, 1)$, where $C_3 := (4 c_0 + 3) C_0$ and $C_4 := 1 + C_0$ with $C_0$ given in \eqref{eq_timepart}. 
    For any $m \in \{0, 1, \cdots, N-1\}$, taking the summation $\sum_{n=m}^{N-1}$ on both sides of \eqref{eq_rkEEn2} and canceling the common terms $\sum_{n=m+1}^{N-1} \E{\srhof{\mathcal{E}^n}}$ yields 
    \begin{equation*}
        \E{\srhof{\mathcal{E}^m}} \leq C_3 \Delta t \sum_{n=m+1}^N \E{\srhof{\mathcal{E}^{n}}} + \E{\srhof{\zeta^N}} + \frac{C_4}{\Delta t} \sum_{n=0}^{N-1} \E{\srhof{\zeta^n}}.
    \end{equation*}
    By the discrete Gronwall inequality \cite[Lemma 3]{zhao2010stable}, the above inequality implies
    \begin{equation}
        \max_{0\leq n \leq N-1} \E{\srhof{\mathcal{E}^n}} \leq 2 C_4 e^{C_0 C_3T}\vbr{\Big}{\E{\srhof{\zeta^N}} + \frac{1}{\Delta t} \sum_{n=0}^{N-1} \E{\srhof{\zeta^n}}},
    \end{equation}
    which implies the desired result.
    
\end{proof}

Invoking Theorem~\ref{thm_rk_stable}, we derive the following error estimate for \Cref{sch_rk}.

\begin{theorem}\label{thm_errest1s}
    Let \Cref{assu_regu,assu_coeff,assu_FLipVar} hold. 
    Then there exist constants $C$ and $\bar{\delta}$ both independent of the time partition $\pi_N$, 
    such that, for $\Delta t \in (0, \bar{\delta})$,
    \begin{equation*}
        \max_{0\leq n\leq {N-1}} \E{\srhof{\Theta_{t_n} - \Theta^n}} \leq C \vbr{\Big}{\E{\srhof{\Theta_{t_N} - \Theta^N}} + \frac{1}{\Delta t} \sum_{i=0}^{N-1} \E{\abs{R^i}^2}}.
    \end{equation*}
\end{theorem}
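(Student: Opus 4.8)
The plan is to realize the exact solution $\{\Theta_{t_n}\}_{n=0}^N$ as a solution of the perturbed one-step scheme \eqref{eq_pert1s}, and then to invoke the stability estimate in \Cref{thm_rk_stable}. First I would identify the perturbation by comparing recursions. The internal stages $\widehat{\Theta}^{n,i}$ in \eqref{eq_defhattht} obey the same recursion, with the same coefficients $a_{ij}$ and the same conditional expectations $\Etni{\cdot}$, as the stages $\Gamma^{n,i}$ generated by the operator $\Phi^{n+1}$ in \eqref{eq_Phirk}. Hence feeding $\Phi^{n+1}$ the exact terminal input $\Gamma^{n+1} = \Theta_{t_{n+1}}$ produces $\Gamma^{n,i} = \widehat{\Theta}^{n,i}$ for every $i$, so that $\Phi^{n+1}(t_{n+1}, \Theta_{t_{n+1}}) = \sum_{i=1}^m b_i F_{t_n}(t_{n,i}, \widehat{\Theta}^{n,i})$. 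The defining relation for $\widehat{\Theta}^{n,0}$ in \eqref{eq_defhattht} then reads $\widehat{\Theta}^{n,0} = \Etn{\Theta_{t_{n+1}}} + \Delta t_n \Etn{\Phi^{n+1}(t_{n+1}, \Theta_{t_{n+1}})}$, and since $R^n = R^{n,0} = \widehat{\Theta}^{n,0} - \Theta_{t_n}$ by \eqref{eq_Rni}, we obtain
\[
\Theta_{t_n} = \Etn{\Theta_{t_{n+1}}} + \Delta t_n \Etn{\Phi^{n+1}(t_{n+1}, \Theta_{t_{n+1}})} - R^n.
\]
This is exactly \eqref{eq_pert1s} for the choice $\Theta_\zeta^n = \Theta_{t_n}$, with perturbations $\zeta^n = -R^n$ for $0 \le n \le N-1$ and terminal perturbation $\zeta^N = \Theta_{t_N} - \Theta^N$.

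Then I would apply \Cref{thm_rk_stable} to the perturbed sequence $\{\Theta_{t_n}\}$ and the unperturbed numerical sequence $\{\Theta^n\}$. Under \Cref{assu_regu} the Feynman--Kac representation \eqref{eq_Ytut} shows that each $\Theta_{t_n}$ is bounded, while each $R^n$ is $\mathcal{F}_{t_n}$-measurable and square-integrable; hence $\zeta = \{\zeta^n\}$ lies in $\mathbb{L}_N^2(\R^p \times \R^{p\times q})$ and the stability theorem is applicable for $\Delta t \le \bar\delta$, with the same threshold $\bar\delta$ inherited from \Cref{thm_rk_stable}. It yields
\[
\max_{0\le n\le N-1} \E{\srhof{\Theta_{t_n} - \Theta^n}} \le C\left(\E{\srhof{\zeta^N}} + \frac{1}{\Delta t}\sum_{n=0}^{N-1}\E{\srhof{\zeta^n}}\right).
\]

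Finally I would convert the seminorm of the residual into the Euclidean norm that appears in \Cref{thm_errest1s}. Because $\rho$ is a seminorm on the finite-dimensional space $\R^p \times \R^{p\times q}$, there is a constant with $\rhof{x} \le C\abs{x}$, so $\srhof{\zeta^n} = \srhof{R^n} \le C\abs{R^n}^2$; combined with $\srhof{\zeta^N} = \srhof{\Theta_{t_N} - \Theta^N}$ and after absorbing all constants into a single $C$, this is precisely the claimed bound. I do not anticipate a substantial obstacle: the whole argument is the standard ``consistency plus stability implies convergence'' template, and the only step needing genuine care is the stage-matching identity $\Gamma^{n,i} = \widehat{\Theta}^{n,i}$, which is what allows the local truncation error $R^n$ to be read off directly as the perturbation in \eqref{eq_pert1s}.
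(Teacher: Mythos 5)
Your proposal is correct and follows essentially the same route as the paper's own proof: cast the exact solution $\{\Theta_{t_n}\}$ as the solution of the perturbed scheme \eqref{eq_pert1s} with $\zeta^N = \Theta_{t_N} - \Theta^N$ and $\zeta^n = -R^n$, apply the stability result of \Cref{thm_rk_stable}, and finish with the seminorm bound $\rhof{\theta} \leq C_{\rho}\abs{\theta}$. The only difference is that you explicitly verify the stage-matching identity $\Gamma^{n,i} = \widehat{\Theta}^{n,i}$ (which the paper asserts in one line as ``the perturbed solution is identical to the analytic solution''), a worthwhile addition but not a different argument.
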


\begin{proof}
    Recall the definition of the local truncation error in the second equation of \eqref{eq_Rni} and the perturbed scheme \eqref{eq_pert1s}. Set
    \begin{equation*}
        \zeta^N = \Theta_{t_N} - \Theta^N, \quad \zeta^n = -R^n, \quad n = N-1, N-2, \cdots, 0,
    \end{equation*}
    Then the perturbed solution $\Theta_{\zeta}$ coincides with the analytic solution $\Theta$.
    Applying Theorem~\ref{thm_rk_stable}, there exist constants $C_1$ and $\bar{\delta}$ both independent of $\pi_N$, such that
    \begin{equation}\label{eq_est_srhofTheta}
    \begin{aligned}
        \max_{0\leq n\leq {N-1}} \E{\srhof{\Theta_{t_n} - \Theta^n}} \leq\;& C_1 \vbr{\Big}{\E{\srhof{\Theta_{t_N} - \Theta^N}} + \frac{1}{\Delta t} \sum_{i=0}^{N-1} \E{\srhof{R^i}}}, \quad \Delta t \in (0, \bar{\delta}). 
    \end{aligned}
    \end{equation}
    By the properties of seminorms, we have
    \begin{equation}\label{eq_Crho}
        \abs{\rho\br{\theta}}
        \leq C_{\rho} \abs{\theta} \sptext{for} \theta \in \R^{p \times (1 + q)}
    \end{equation}
    with the constant $C_{\rho} := \sup\bbr{\rho\br{\theta^{\prime}}: \theta^{\prime} \in \R^{p \times (1 + q)}, \; \abs{\theta^{\prime}} \leq 1} < +\infty$.
    Combining \eqref{eq_est_srhofTheta} with \eqref{eq_Crho} yields the desired result.
\end{proof}

By combining \Cref{thm_consistent,thm_errest1s}, we immediately obtain the following theorem on the convergence order of \Cref{sch_rk}.

\begin{theorem}\label{thm_converge}
    Suppose that Assumptions~\ref{assu_regu}, \ref{assu_coeff} and \ref{assu_FLipVar} hold.
    Further suppose that the terminal error satisfies $\E{\srhof{\Theta_{t_N} - \Theta^N}} \leq C \abs{\Delta t}^{2r}$ for some constant $C$ independent of the time partition $\pi_N$.
    If the coefficients of \Cref{sch_rk} satisfy the conditions $C(r)$ in \eqref{eq_defCr},
    then \Cref{sch_rk} converges with order $r$ in the sense that
    \begin{equation*}
        \max_{0\leq n\leq {N-1}} \E{\srhof{\Theta_{t_n} - \Theta^n}} = \mO\br{\br{\Delta t}^{2r}}, \sptext{as} \Delta t \to 0.
    \end{equation*}
\end{theorem}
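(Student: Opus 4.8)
The plan is to obtain the result by combining the consistency estimate of \Cref{thm_consistent} (equivalently, the error expansion of \Cref{thm_taylor}) with the stability-based global error estimate of \Cref{thm_errest1s}. Since \Cref{thm_errest1s} already bounds the global error by the terminal error plus an accumulated local truncation error, and since the two alternative hypotheses on the coefficients reduce to a single order-$r$ consistency bound, the argument is a short synthesis of results established earlier in the paper.

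First I would fix the consistency input. If the coefficients satisfy $C(r)$ in \eqref{eq_defCr}, then \Cref{prop_sim_ordcondi}, applied branch by branch, shows they also satisfy \eqref{eq_ord_cond1}; the alternative hypothesis assumes \eqref{eq_ord_cond1} directly. In either case \Cref{thm_consistent} gives $\max_{0\le n\le N-1} R^n = \mO\br{(\Delta t)^{r+1}}$, where, by the definition of $\mO(\cdot)$ adopted in \cref{sec_expand_R}, this means there is a constant $C$ independent of $n$ and of $\pi_N$ with $\abs{R^n} \le C(\Delta t)^{r+1}$ almost surely for all $n$. Squaring and taking expectations yields the pointwise-in-$n$ bound $\E{\abs{R^n}^2} \le C^2 (\Delta t)^{2r+2}$.

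Next I would feed this into \Cref{thm_errest1s}, whose hypotheses (\Cref{assu_regu,assu_coeff,assu_FLipVar}) coincide with those assumed here, so that for $\Delta t \le \bar{\delta}$
\begin{equation*}
    \max_{0\le n\le N-1} \E{\srhof{\Theta_{t_n} - \Theta^n}} \le C\vbr{\Big}{\E{\srhof{\Theta_{t_N} - \Theta^N}} + \frac{1}{\Delta t}\sum_{n=0}^{N-1} \E{\abs{R^n}^2}}.
\end{equation*}
The terminal term is $\mO\br{(\Delta t)^{2r}}$ by the standing assumption $\E{\srhof{e^N}} \le C(\Delta t)^{2r}$. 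For the accumulated term, the regularity of the partition in \eqref{eq_timepart} gives $\min_n \Delta t_n \ge \Delta t/C_0$, hence $N \le T/\min_n \Delta t_n \le C_0 T/\Delta t$; combining this with the consistency bound,
\begin{equation*}
    \frac{1}{\Delta t}\sum_{n=0}^{N-1} \E{\abs{R^n}^2} \le \frac{N}{\Delta t}\, C^2 (\Delta t)^{2r+2} \le C_0 T C^2 (\Delta t)^{2r} = \mO\br{(\Delta t)^{2r}}.
\end{equation*}
Adding the two contributions gives the claimed $\mO\br{(\Delta t)^{2r}}$ estimate.

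The substantive work lies in the two ingredients already proved — the tree-based error expansion (\Cref{thm_taylor}) underlying consistency and the discrete Gronwall stability argument underlying \Cref{thm_errest1s}. The only point requiring care in the present proof is that the stability estimate carries an explicit factor $1/\Delta t$ in front of the summed local errors, which a priori threatens to lower the order by one; this is absorbed precisely because $N = \mO(1/\Delta t)$ under the regular-partition assumption, so one power of $\Delta t$ is recovered from $N(\Delta t)^{2r+2}/\Delta t$. I anticipate no genuine obstacle beyond the bookkeeping of these constants and confirming they remain independent of $\pi_N$.
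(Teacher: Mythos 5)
Your proof is correct and follows essentially the same route as the paper: the paper's own proof simply invokes \Cref{thm_errest1s} together with \Cref{thm_consistent} and leaves the remaining arithmetic implicit. Your added bookkeeping (squaring the almost-sure bound on $R^n$ and using $N \le C_0 T/\Delta t$ from \eqref{eq_timepart} to absorb the $1/\Delta t$ factor) is precisely the detail the paper omits, and your reduction of the $C(r)$ case to \eqref{eq_ord_cond1} via \Cref{prop_sim_ordcondi} matches the paper's reasoning in \cref{sec_ord_condi}.
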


\begin{proof}
    Under the assumed conditions $\E{\srhof{\Theta_{t_N} - \Theta^N}} \leq C \abs{\Delta t}^{2r}$, Theorem~\ref{thm_errest1s} implies that
    \begin{equation*}
        \max_{0\leq n\leq {N-1}} \E{\srhof{\Theta_{t_n} - \Theta^n}} \leq C^{\prime} \vbr{\Big}{\abs{\Delta t}^{2r} + \frac{1}{\Delta t} \sum_{i=0}^{N-1} \E{\abs{R^i}^2}} \sptext{for} \Delta t \leq \bar{\delta},
    \end{equation*}
    where $C^{\prime}$ and $\bar{\delta}$ are some constants independent of $\pi_N$.
    Applying \Cref{thm_consistent} to bound the local truncation errors on the right side of the above inequality yields the desired result.
\end{proof}

The convergence result in Theorem~\ref{thm_converge} relies on the seminorm $\rho\br{\cdot}$ introduced in the technical Assumption~\ref{assu_FLipVar}.
In practice, $\rho\br{\cdot}$ can be instantiated as the seminorms specified in Lemma~\ref{lemm_fnoz} and Lemma~\ref{lemm_linearf}, which yields the following corollary.

\begin{corollary}\label{coro_error_est}
Suppose \Cref{assu_regu,assu_coeff} hold.
Assume the coefficients of \Cref{sch_rk} satisfy \eqref{eq_ord_cond1} or the conditions $C\br{r}$ in \eqref{eq_defCr}.
Then the following statements hold:
\begin{itemize}
    \item[(i)] If the generator $f$ satisfies \eqref{eq_Lipf}, and the terminal values for \Cref{sch_rk} satisfy 
    \begin{equation*}
        \E{\abs{Y_{t_N} - Y^N}^2} \leq C \abs{\Delta t}^{2r}
    \end{equation*}
    for some constant $C$ independent of the time partition $\pi_N$, then the $Y$-component of \Cref{sch_rk} converges with order $r$, i.e.,
    \begin{equation*}
        \max_{0\leq n\leq {N-1}}\E{\abs{Y_{t_n} - Y^n}^2} = \mO\br{\abs{\Delta t}^{2r}}, \sptext{as} \Delta t \to 0.
    \end{equation*}
    \item[(ii)] If the generator $f$ satisfies \eqref{eq_linearf} and \eqref{eq_fs1},
    and the terminal values for \Cref{sch_rk} satisfy $\E{\abs{\Theta_{t_N} - \Theta^N}^2} \leq C \abs{\Delta t}^{2r}$
    for some constant $C$ independent of the time partition $\pi_N$,
    then \Cref{sch_rk} converges with order $r$, i.e.,
    \begin{equation*}
        \max_{0\leq n\leq {N-1}}\E{\abs{\Theta_{t_n} - \Theta^n}^2} = \mO\br{\abs{\Delta t}^{2r}}, \sptext{as} \Delta t \to 0.
    \end{equation*}
\end{itemize}
\end{corollary}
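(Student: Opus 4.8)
The plan is to derive \Cref{coro_error_est} as a direct specialization of \Cref{thm_converge}. That theorem already delivers an $r$th-order bound $\max_{n} \E{\srhof{\Theta_{t_n} - \Theta^n}} = \mO(\abs{\Delta t}^{2r})$ under \Cref{assu_regu,assu_coeff,assu_FLipVar}, the order conditions, and a terminal-error hypothesis. The corollary drops the technical \Cref{assu_FLipVar} and instead imposes the concrete structural conditions \eqref{eq_Lipf} and \eqref{eq_linearf}--\eqref{eq_fs1} on the generator $f$. The key observation is that \Cref{lemm_fnoz,lemm_linearf} are exactly the bridges verifying \Cref{assu_FLipVar} from these structural conditions, while simultaneously identifying the seminorm $\rhof{\cdot}$; once this is in place, all that remains is to read off the component-wise estimate from the abstract seminorm bound.

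For part (i), I would first invoke \Cref{lemm_fnoz}: since $f$ satisfies \eqref{eq_Lipf}, \Cref{assu_FLipVar} holds with $C_{F1} = C^2$, $C_{F2} = 0$, and seminorm $\rhof{\cdot} = \abs{\cdot}_{\R^p}$ as in \eqref{eq_defnormRp}. The order conditions \eqref{eq_ord_cond1} (or $C(r)$) hold by hypothesis, so the only remaining input to \Cref{thm_converge} is the terminal-error bound $\E{\srhof{\Theta_{t_N} - \Theta^N}} \le C\abs{\Delta t}^{2r}$. For this seminorm one has $\srhof{\Theta_{t_N} - \Theta^N} = \abs{Y_{t_N} - Y^N}^2$, so the hypothesis $\E{\abs{Y_{t_N} - Y^N}^2} \le C\abs{\Delta t}^{2r}$ supplies precisely what is needed. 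Applying \Cref{thm_converge} then gives $\max_n \E{\srhof{\Theta_{t_n} - \Theta^n}} = \max_n \E{\abs{Y_{t_n} - Y^n}^2} = \mO(\abs{\Delta t}^{2r})$, the claimed $Y$-convergence. The seminorm sees only the $Y$-component, which is exactly why part (i) asserts convergence of the $Y$-part alone.

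Part (ii) follows the same template with \Cref{lemm_linearf} replacing \Cref{lemm_fnoz}: when $f$ is linear and satisfies \eqref{eq_linearf}--\eqref{eq_fs1}, \Cref{lemm_linearf} yields \Cref{assu_FLipVar} with $C_{F1} = C_{F2} = 24 q C^2$ and $\rhof{\cdot}$ equal to the full Euclidean norm $\abs{\cdot}$ on $\R^p \times \R^{p\times q}$. Then $\srhof{\cdot} = \abs{\cdot}^2$ captures both components, the terminal hypothesis $\E{\abs{\Theta_{t_N} - \Theta^N}^2} \le C\abs{\Delta t}^{2r}$ provides the required starting-error bound, and \Cref{thm_converge} delivers $\max_n \E{\abs{\Theta_{t_n} - \Theta^n}^2} = \mO(\abs{\Delta t}^{2r})$ for the full pair $\Theta = (Y, Z)$.

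The argument is essentially bookkeeping, so I do not expect a substantive obstacle. The one point requiring care is matching the seminorm to the claimed quantity and checking that the stated terminal-error hypothesis coincides with $\E{\srhof{\Theta_{t_N} - \Theta^N}}$ under that seminorm: this is immediate for the Euclidean norm in (ii), and holds in (i) only because $\rhof{\cdot}$ ignores the $Z$-component, which is precisely what makes the weaker terminal condition involving $Y$ alone sufficient.
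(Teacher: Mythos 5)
Your proposal is correct and follows exactly the paper's intended argument: the paper derives this corollary by specializing \Cref{thm_converge}, using \Cref{lemm_fnoz} (seminorm $\abs{\cdot}_{\R^p}$, which sees only the $Y$-component) for part (i) and \Cref{lemm_linearf} (full Euclidean norm) for part (ii) to verify \Cref{assu_FLipVar} and to match the terminal-error hypothesis to $\E{\srhof{\Theta_{t_N}-\Theta^N}}$. Your bookkeeping on how the choice of seminorm determines both the admissible terminal condition and the scope of the conclusion ($Y$ alone versus the full pair $\Theta$) is precisely the content of the paper's reasoning.
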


We emphasize that the conditions on $f$ in Corollary~\ref{coro_error_est} are sufficient, but not necessary, for the convergence of \Cref{sch_rk}.
As evidenced by the numerical experiments in \Cref{sec_numtest}, \Cref{sch_rk} remains stable and achieves convergence rates consistent with the theoretical predictions even when $f\br{t, x, y, z}$ depends nonlinearly on $\br{y, z}$.
More broadly, the conditions on $f$ in Corollary~\ref{coro_error_est} serve solely to verify Assumption~\ref{assu_FLipVar} and to guarantee the stability of \Cref{sch_rk}.
Whether Assumption~\ref{assu_FLipVar} is necessary for the stability of \Cref{sch_rk} remains an open question.
Since this work focuses primarily on the order conditions for the coefficients of \Cref{sch_rk}, a thorough investigation into the necessity of Assumption~\ref{assu_FLipVar} and the stability of \Cref{sch_rk} for general generators $f$ is left for future work.

\bibliographystyle{plain}
\bibliography{references}

\endgroup

\bibliographystyle{plain}
\bibliography{references}

\end{document}